 \newtheorem{theorem}{Theorem}[section]
\newtheorem{proposition}[theorem]{Proposition}
\newtheorem{lemma}[theorem]{Lemma}
\newtheorem{corollary}[theorem]{Corollary}
\newcommand{\newreptheorem}[2]{\newtheorem*{rep@#1}{\rep@title}\newenvironment{rep#1}[1]{\def\rep@title{#2 \ref*{##1}}
\begin{rep@#1}}{\end{rep@#1}}}
\newcommand{\dom}{\mathrm{dom}}
\newcommand{\del}{\mathrm{del}}
\theoremstyle{definition}
\newtheorem{remark}[theorem]{Remark}
\newtheorem{claim}[theorem]{Claim}
\newtheorem{example}[theorem]{Example}
\begin{document}

\title{Fine multidegrees, universal Gr\"{o}bner bases, and matrix Schubert varieties}          
    
\author{Daoji Huang} 
\address{(Daoji Huang) University of Massachusetts Amherst}
\email{daojihuang@umass.edu}

\author{Matt Larson}
\address{(Matt Larson) Princeton University and the Institute for Advanced Study}
\email{mattlarson@princeton.edu}
\date{\today}
\begin{abstract}
We give a criterion for a collection of polynomials to be a universal Gr\"{o}bner basis for an ideal in terms of the multidegree of the closure of the corresponding affine variety in $(\mathbb{P}^1)^N$. This criterion can be used to give simple proofs of several existing results on universal Gr\"{o}bner bases. 
We introduce fine Schubert polynomials, which record the multidegrees of the closures of  matrix Schubert varieties in $(\mathbb{P}^1)^{n^2}$. 
We compute the fine Schubert polynomials of permutations $w$ where the coefficients of the Schubert polynomials of $w$ and $w^{-1}$ are all either 0 or 1, and we use this to give a universal Gr\"{o}bner basis for the ideal of the matrix Schubert variety of such a permutation. 
\end{abstract}
 \subjclass[2020]{Primary: 14M15, 13P10, 13C40, 14M12, 05E14}

\maketitle
\tableofcontents
\vspace{-20 pt}

\section{Introduction}
\label{sec:intro}

For a permutation $w \in S_n$, let $X_w$ be the associated matrix Schubert variety, i.e., the closure in the space of $n \times n$ complex matrices of the preimage of the Schubert cell labeled by $w$ under the map from $GL_n$ to the flag variety $GL_n/B$.  These varieties, which were introduced by Fulton \cite{FultonDegen}, can be used to give algebraic proofs of geometric facts about Schubert varieties and to give an algebro-geometric interpretation of Schubert polynomials \cite{KM05}.
We identify the space of $n \times n$ complex matrices with $\mathbb{A}^{n^2}$.

Gr\"{o}bner degenerations are a powerful tool to study the algebra, combinatorics, and geometry of matrix Schubert varieties \cites{KM05,KMY09,HPW22,KW21}.
Let $\varphi \colon \mathbb{G}_m \to \mathbb{G}_m^{n^2}$ be an inclusion of a $1$-parameter subgroup into the torus which acts on $\mathbb{A}^{n^2}$. 
A Gr\"{o}bner degeneration of $X_w$ can be understood as the limit $\lim_{t \to 0} \varphi(t) \cdot X_w$, where $t \in \mathbb{G}_m$. 
If $\varphi$ is sufficiently generic, then the limit will be a scheme whose reduction is a union of coordinate subspaces, and so the degeneration is an essentially combinatorial object. Properties of $X_w$ (such as its Hilbert function or Cohen--Macaulayness) can be analyzed in this degeneration. See \cite[Section 1.7]{KM05}.

We introduce an invariant which controls the possible Gr\"{o}bner degenerations of $X_w$, and can, in some cases, be used to produce a universal Gr\"{o}bner basis for the ideal of $X_w$. This invariant can be defined for any pure dimensional closed subscheme of an affine space, so we work in that level of generality. We work over the complex numbers for convenience, but all statements in this paper can be easily adapted to hold over any field. See Remark~\ref{rem:characteristic}.

\medskip

Let $X \subseteq \mathbb{A}^N$ be a closed subscheme whose irreducible components all have codimension $d$. Consider the natural inclusion of $\mathbb{A}^N$ into $(\mathbb{P}^1)^N$, and let $\overline{X}$ denote the closure of $X$ in $(\mathbb{P}^1)^N$. Then $\overline{X}$ defines a class $[\overline{X}] \in H_{2(N - d)}((\mathbb{P}^1)^N)$ in the homology of $(\mathbb{P}^1)^N$. Let $[N] = \{1, \dotsc, N\}$. For each $i \in [N]$, let $[H_{i}] \in H^2((\mathbb{P}^1)^{N})$ be the class of the closure of the hyperplane $\{(a_{k})_{k \in [N]} : a_{i} = 0\}$ in $(\mathbb{P}^1)^{N}$. We say that a polynomial is \textbf{squarefree-supported} if each monomial appearing is a product of distinct variables. There is a unique squarefree-supported polynomial $\mathfrak{F}_{\overline{X}}(z) \in \mathbb{Z}[z_{i} : i \in [N]]$  such that
$$[\overline{X}] = \mathfrak{F}_{\overline{X}}([H_{1}], \dotsc, [H_N]) \smallfrown [(\mathbb{P}^1)^{N}].$$
We call this polynomial the \textbf{fine multidegree polynomial} of $X$ because it records information about a very fine grading:
it can be described in terms of the Hilbert function of the $\mathbb{Z}^{N}$-graded homogenization of the ideal of $X$, see Section~\ref{sec:grobner}. Note that $\mathfrak{F}_{\overline{X}}(z)$ is homogeneous of degree $d$. 

There is a canonical basis for $H_*((\mathbb{P}^1)^{N})$, given by the classes of coordinate subspaces. The basis for $H_{2k}((\mathbb{P}^1)^{N})$ is labeled by subsets $S$ of $[N]$ of size $N - k$, with $S$ corresponding to the class of the coordinate subspace $\{(a_{s})_{s \in [N]} : a_{i} = 0 \text{ if }i \in S\}$. 
The \emph{multidegree} of $\overline{X}$ is the collection of coefficients which are used to expand the class $[\overline{X}]$ in terms of this canonical basis. These coefficients are recorded by the coefficients of the fine multidegree polynomial: the coefficient of $z^S := \prod_{i \in S} z_{i}$ is the coefficient of the basis element corresponding to $S$. 

If $|S| = d$, the coefficient of $z^S$ is also the degree of the composition $X \hookrightarrow \mathbb{A}^N \to \mathbb{A}^{S^c}$, where the second map is the projection onto the factors labeled by elements not in $S$. In particular, the coefficients of the fine multidegree polynomial are nonnegative. 

The polynomial $\mathfrak{F}_{\overline{X}}(z)$ controls the possible Gr\"{o}bner degenerations of $X$. Indeed, $\lim_{t \to 0} \varphi(t) \cdot \overline{X}$ contains $\lim_{t \to 0} \varphi(t) \cdot X$. The multidegree of $\lim_{t \to 0} \varphi(t) \cdot \overline{X}$ is the same as the multidegree of $\overline{X}$, so if the coordinate subspace corresponding to $S \subset [N]$ is an irreducible component of the limit, then the coefficient of $z^S$ in $\mathfrak{F}_{\overline{X}}(z)$ must be positive. 
One can show that the coefficient of $z^S$ in the fine multidegree polynomial is equal to the maximal multiplicity of $\{(a_{k})_{k \in [N]} : a_{i} = 0 \text{ if }i \in S\}$ in any Gr\"{o}bner degeneration of $X$. See Proposition~\ref{prop:finegrobner}.

The \textbf{support} of a fine multidegree polynomial is the collection of subsets $S$ of $[N]$ where the coefficient of $z^S$ in the fine multidegree polynomial is nonzero.  If $X$ is integral, the support of a fine multidegree polynomial is the set of bases of a \emph{matroid} of rank $d$ on $[N]$. This matroid is dual to the \emph{algebraic matroid} of $X$: if $|S| = d$, the coefficient of $z^S$ is nonzero if and only if the coordinate functions $\{a_{i} : i \in S^c\}$ on $X$ are a transcendence basis for the function field of $X$. 

\medskip

A collection of polynomials in an ideal is said to be a \emph{universal Gr\"{o}bner basis} for that ideal if they are a Gr\"{o}bner basis for any term order. We follow \cite{SturmfelsGrobner} for Gr\"{o}bner basis conventions. While general results guarantee the existence of a universal Gr\"{o}bner basis, there are only a few families of ideals where explicit universal Gr\"{o}bner bases are known. The maximal minors of a generic $p \times q$ matrix form a universal Gr\"{o}bner basis \cite{BernsteinZelevinsky,SturmfelsZelevinsky}, see also \cite{CDGMaximal}. This remains true if some of the entries of the $p \times q$ matrix are set to $0$ \cite[Proposition 5.4]{BoocherDeterminant}. There is also a known universal Gr\"{o}bner basis for the ideal of $2 \times 2$ minors, which is a special case of \cite[Propositions 4.11 and 8.11]{SturmfelsGrobner} or \cite[Proposition 10.1.11]{Villarreal}. See \cite[Chapter 5]{BCRV} for a summary. In some cases, ideals associated to linear spaces \cite[Theorem 4]{PS} \cite[Theorem 1.3]{ArdilaBoocher} \cite[Theorem B]{BergetFink} have explicit universal Gr\"{o}bner bases.

We give a criterion for a collection of polynomials to be a universal Gr\"{o}bner basis for an ideal. This criterion can be used to give a simple proof of all of the above results. Let $I_X$ denote the ideal of $X$ in $\mathbb{C}[a_1, \dotsc, a_N]$. The \textbf{spread} of $f \in I_X$ is the collection of $i \in [N]$ such that $a_i$ appears in a monomial with nonzero coefficient in $f$. We denote this set by $\operatorname{spr}(f)$. 
For nonzero $f_1, \dotsc, f_r \in I_X$, let $\Delta(f_1, \dotsc, f_r)$ be the simplicial complex on $[N]$ whose nonfaces are generated by the spreads of the $f_i$.

\begin{theorem}\label{thm:irreducible}
Let $X$ be a closed subscheme of $\mathbb{A}^N$ whose irreducible components all have codimension $d$.
Let $f_1, \dotsc, f_r \in I_X$ be nonzero squarefree-supported polynomials. Suppose that $\Delta(f_1, \dotsc, f_r)$ is pure of dimension $N - d -1$, and that for each facet $U$ of $\Delta(f_1, \dotsc, f_r)$, the coefficient of $z^{U^c}$ in $\mathfrak{F}_{\overline{X}}(z)$ is positive.
Then $\{f_1, \dotsc, f_r\}$ is a universal Gr\"{o}bner basis for ${I}_X$, and all coefficients of $\mathfrak{F}_{\overline{X}}(z)$ are equal to either $0$ or $1$. 
\end{theorem}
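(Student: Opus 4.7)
The plan is to fix a term order $\prec$ and show $\mathrm{in}_\prec(I_X) = J_\prec$, where $J_\prec = \langle m_1,\dots,m_r \rangle$ and $m_i = \mathrm{in}_\prec(f_i)$; taking $\prec$ arbitrary then yields the universal Gr\"obner basis conclusion. Since each $f_i$ is squarefree, each $m_i$ is a squarefree monomial, so $J_\prec$ is the Stanley--Reisner ideal of the simplicial complex $\Delta_\prec$ whose nonfaces are generated by the supports $\mathrm{supp}(m_i)$. From $\mathrm{supp}(m_i) \subseteq \mathrm{spread}(f_i)$ we obtain $\Delta_\prec \subseteq \Delta$, and from $J_\prec \subseteq \mathrm{in}_\prec(I_X)$ we obtain $\mathrm{SR}(\mathrm{in}_\prec(I_X)) \subseteq \Delta_\prec$.

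The first main step is a multiplicity estimate. Let $P = P_{F^c} = \langle x_i : i \in F^c\rangle$ be a minimal prime of $\mathrm{in}_\prec(I_X)$; pure-dimensionality of $X$ gives $|F^c| = d$, so $F$ is a size-$(N-d)$ face of $\Delta_\prec$. Because $\Delta_\prec \subseteq \Delta$ and $\Delta$ has no face of size exceeding $N-d$, $F$ is a facet of $\Delta_\prec$, so $P$ is a minimal prime of the radical ideal $J_\prec$; hence $J_\prec R_P = PR_P$. The sandwich $PR_P = J_\prec R_P \subseteq \mathrm{in}_\prec(I_X) R_P \subseteq PR_P$ (the last inclusion because $\mathrm{in}_\prec(I_X) R_P$ is a proper ideal in the local ring $(R_P, PR_P)$) forces $\mathrm{in}_\prec(I_X) R_P = PR_P$, so the multiplicity of $L_{F^c}$ in $V(\mathrm{in}_\prec(I_X))$ is $1$. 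By Proposition~\ref{prop:finegrobner}, the coefficient of each $z^S$ in $\mathfrak{F}_{\overline{X}}$ is the maximum of such multiplicities over all $\prec$, hence lies in $\{0,1\}$, giving the second assertion. Combined with the hypothesis that $c_{U^c} > 0$ for every facet $U$ of $\Delta$ and the observation that any $S$ in the support is the complement of some facet (necessarily of size $N-d$) of some $\Delta_\prec$, and therefore a facet of the pure complex $\Delta$, the support of $\mathfrak{F}_{\overline X}$ equals $\{U^c : U \text{ facet of } \Delta\}$.

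To conclude $\mathrm{in}_\prec(I_X) = J_\prec$, I would reduce to the statement that $\Delta_\prec$ is pure of dimension $N-d-1$. Granted this, the minimal primes of $J_\prec$ coincide with those of $\mathrm{in}_\prec(I_X)$, giving $\sqrt{\mathrm{in}_\prec(I_X)} = \bigcap_{F \text{ facet of } \Delta_\prec} P_{F^c} = J_\prec$; the chain
\[
J_\prec \;\subseteq\; \mathrm{in}_\prec(I_X) \;\subseteq\; \sqrt{\mathrm{in}_\prec(I_X)} \;=\; J_\prec
\]
then collapses, simultaneously proving that $\{f_i\}$ is a Gr\"obner basis for $\prec$ and that $\mathrm{in}_\prec(I_X)$ is radical.

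The main obstacle is establishing this purity of $\Delta_\prec$. A facet $F$ of $\Delta_\prec$ with $|F| < N-d$ would give a monomial of $\mathrm{in}_\prec(I_X)$ supported in $F$ not divisible by any $m_i$, breaking the Gr\"obner basis; to rule this out, I would extend $F$ to a facet $G$ of the pure complex $\Delta$, invoke the hypothesis $c_{G^c} > 0$ to produce a term order $\prec'$ for which $L_{G^c}$ is a codimension-$d$ component of $V(\mathrm{in}_{\prec'}(I_X))$, and derive a contradiction by playing the combinatorial constraint imposed by $F$ being a facet of $\Delta_\prec$ (for each $k \in G \setminus F$ there is some $m_{i(k)}$ with $k \in \mathrm{supp}(m_{i(k)}) \subseteq F \cup \{k\}$) against the multiplicity-one local structure at $P_{G^c}$ provided by $\prec'$. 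This interplay between the supports of initial monomials for different term orders is where the delicate combinatorics of the proof concentrates.
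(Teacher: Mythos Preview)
Your multiplicity-one argument is correct and clean: for any minimal prime $P=P_{F^c}$ of $\mathrm{in}_\prec(I_X)$ you correctly deduce that $F$ is a facet of $\Delta_\prec$, hence $P$ is a minimal prime of the radical ideal $J_\prec$, and the local sandwich $PR_P=J_\prec R_P\subseteq \mathrm{in}_\prec(I_X)R_P\subseteq PR_P$ forces multiplicity one. Combined with Proposition~\ref{prop:finegrobner}, this does give the $0/1$ conclusion for $\mathfrak{F}_{\overline X}$.

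The gap is in the Gr\"obner basis step. You assert that purity of $\Delta_\prec$ implies ``the minimal primes of $J_\prec$ coincide with those of $\mathrm{in}_\prec(I_X)$'', but you have only shown one inclusion: every minimal prime of $\mathrm{in}_\prec(I_X)$ is a minimal prime of $J_\prec$. The reverse --- that every facet $F$ of $\Delta_\prec$ gives a minimal prime of $\mathrm{in}_\prec(I_X)$ --- amounts to showing that no $g\in I_X$ has $\operatorname{supp}(\mathrm{in}_\prec(g))\subseteq F$, which is precisely the Gr\"obner basis property you are trying to establish. A degree count does not close this: in the hypersurface example $X=V(a_1a_2-a_3a_4)$ one has $\deg(X)=2$ while $\Delta$ has $4$ facets, so ordinary degree cannot pin down $|\{\text{facets of }\Delta_\prec\}|$. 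Your final paragraph acknowledges that purity itself is not proved, and the sketched interplay ``between the supports of initial monomials for different term orders'' is not an argument.

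The paper avoids this entirely by passing to $(\mathbb{P}^1)^N$ via multihomogenization (Theorem~\ref{thm:grobnercriterion}). The crucial gain is that $\operatorname{spr}(\mathrm{in}_<(f_i^h))=\operatorname{spr}(f_i)$ \emph{exactly}, not merely $\subseteq$: the initial monomial of $f_i^h$ picks one of $\{a_j,b_j\}$ for every $j\in\operatorname{spr}(f_i)$. Thus the initial complex is a complex $\widetilde\Delta$ on $[N,\bar N]$ obtained by \emph{lifting} the minimal nonfaces of $\Delta$, rather than a subcomplex of $\Delta$ as in your $\Delta_\prec$. The combinatorial Lemma~\ref{lem:tildelift} then shows that the facets of $\widetilde\Delta$ are in bijection with the facets of $\Delta$; this is the substitute for the bijection of minimal primes that your argument needs but cannot supply. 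The surjectivity hypothesis $\pi_U(\overline X)=(\mathbb{P}^1)^U$ is exactly what verifies that the lift is ``full'' in the sense of that lemma. Working in $\mathbb{A}^N$ you lose control of the spread of the initial term, and with it the leverage the paper's argument exploits.
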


See Theorem~\ref{thm:grobnercriterion} for a more general criterion, which is valid for any closed subscheme of $\mathbb{A}^N$. If $X$ is integral and all coefficients of $\mathfrak{F}_{\overline{X}}(z)$ are $0$ or $1$, then there are $f_1, \dotsc, f_r \in I_X$ such that the hypothesis of Theorem~\ref{thm:irreducible} holds. See Remark~\ref{rem:CS}.
It is automatic that if the coefficient of $z^{U^c}$ is positive, then $U$ does not contain the spread of any nonzero $f \in I_X$. See Lemma~\ref{lem:spreadprojection}. If $f_1, \dotsc, f_r$ is a universal Gr\"{o}bner basis (or even a Gr\"{o}bner basis with respect to any lexicographic order), then $\Delta(f_1, \dotsc, f_r)$ is pure of dimension $N - d - 1$, and for each facet $U$ of $\Delta(f_1, \dotsc, f_r)$, the coefficient of $z^{U^c}$ in $\mathfrak{F}_{\overline{X}}(z)$ is positive. See Proposition~\ref{prop:pure}. 

For a collection $\{f_1, \dotsc, f_r\}$ satisfying the hypothesis of Theorem~\ref{thm:irreducible}, the support of $\mathfrak{F}_{\overline{X}}(z)$ will be the sets $S \subset [N]$ of size $d$ that intersect the spread of each $f_i$.  In particular, giving such a collection computes $\mathfrak{F}_{\overline{X}}(z)$ because all coefficients are $0$ or $1$.
If $X$ is irreducible, then the support of $\mathfrak{F}_{\overline{X}}(z)$ is the set of bases of a matroid, and the inclusion-minimal spreads of the $f_i$ are the cocircuits of the matroid  given by the support of $\mathfrak{F}_{\overline{X}}(z)$. The $f_i$ are sometimes called the \emph{circuit polynomials} of $X$, see, e.g., \cite{RST}.

Theorem~\ref{thm:irreducible} allows one to apply powerful tools from intersection theory to verify that a collection of polynomials is a universal Gr\"{o}bner basis. We illustrate the use of Theorem~\ref{thm:irreducible} in Section~\ref{ssec:determinantal} by giving a short proof of the known universal Gr\"{o}bner basis for the ideal of $2 \times 2$ minors. We will apply Theorem~\ref{thm:irreducible} to give a universal Gr\"{o}bner basis for a family of matrix Schubert varieties.

\subsection{Fine Schubert polynomials}

We study the fine multidegree polynomials of matrix Schubert varieties, which we call \textbf{fine Schubert polynomials}. It is known that $X_w$ is a variety of dimension $n^2 - \ell(w)$, where $\ell(w)$ is the length of $w$. Let $\mathfrak{F}_{w}(z) \in \mathbb{Z}[z_{i,j} : (i,j) \in [n]^2]$ denote the fine multidegree polynomial of $X_w \subset \mathbb{A}^{n^2}$. Note that $\mathfrak{F}_{w}(z)$ is  homogeneous of degree $\ell(w)$ and has nonnegative coefficients. Concretely, if we fill in the entries of an $n \times n$ matrix which are labeled by elements of $S^c$ with generic complex numbers, then the coefficient of $z^S$ is the number of ways that the entries in $S$ can be filled in so that the resulting matrix lies in $X_w$. 

\begin{example}
We have $\mathfrak{F}_{2143}(z)=z_{11}(z_{12}+z_{13}+z_{21}+z_{22}+z_{23}+z_{31}+z_{32}+z_{33}). $
\end{example}

We will use fine Schubert polynomials to study the Gr\"{o}bner degenerations of matrix Schubert varieties. Known results on the Gr\"{o}bner theory of matrix Schubert varieties can be translated into information about fine Schubert polynomials. 

\begin{example}
In \cite{KM05}, the authors show that, for certain $\varphi$ (corresponding to an antidiagonal term order), $\lim_{t \to 0} \varphi(t) \cdot X_w$ is the union of the coordinate subspaces corresponding to the \emph{reduced pipe dreams} of \cite{BJS93}. In particular, if $S$ is the set of crosses in a reduced pipe dream for $w$, then the coefficient of $z^S$ in $\mathfrak{F}_{w}(z)$ is positive. 
\end{example}

\begin{example}
In \cite{KW21}, the authors show that there are certain $\varphi$ for which $\lim_{t \to 0} \varphi(t) \cdot X_w$ is a scheme whose reduction is a union of coordinate subspaces corresponding to the blank tiles in the \emph{bumpless pipe dreams} of \cite{LLS}, with the multiplicity of a coordinate subspace being the number of bumpless pipe dreams with the given set of blank tiles.  In particular, the coefficient of $z^S$ in $\mathfrak{F}_{w}(z)$ is at least the number of bumpless pipe dreams with $S$ as the set of blank tiles. Unlike for pipe dreams, there can be multiple bumpless pipe dreams with the same set of blank tiles. 
\end{example}

There are other combinatorial models for Schubert polynomials, such as hybrid pipe dreams \cite{KnutsonUdell}, which conjecturally have Gr\"{o}bner interpretations \cite{KnutsonPersonal}, and so similarly should give lower bounds on the coefficients of fine Schubert polynomials. Fine Schubert polynomials could obstruct the existence of Gr\"{o}bner interpretations for other combinatorial models, such as Bruhat chains \cite{Yu24}. 
\medskip

When $w$ is the permutation in $S_{p + q - r}$ given by $(1, 2, \dotsc, r, q+1, q+2, \dotsc, p + q - r, r + 1, \dotsc, q)$, the matrix Schubert variety $X_w$ is isomorphic to the product of the locus of $p \times q$ matrices of rank at most $r$ with an affine space. In this case, fine Schubert polynomials have been extensively studied in connection with rigidity theory \cite{SingerCucuringu,Bernstein,Tsakiris,BDGGL}. Understanding the support of the fine Schubert polynomials of these permutations is a special case of understanding \emph{bipartite rigidity} \cite{KNN} of graphs.  

When $r \in \{0, 1, p-1, p\}$, these fine Schubert polynomials are easy to understand. When $r =2$, the support of the fine Schubert polynomial was computed in \cite{Bernstein}. When $r \ge p - 3$, the support was computed in \cite{BDGGL}. In both of these cases, the coefficients of $\mathfrak{F}_{w}(z)$ can be more than $1$, and the coefficients are not understood. 

It would be surprising if there is a simple description of the support of this family of fine Schubert polynomials, as the problem of understanding bipartite rigidity of graphs is considered to be of comparable difficulty to the longstanding problem of understanding graph rigidity. 
In \cite[Theorem 1.1]{BDGGL}, it is shown that understanding the support of $\mathfrak{F}_{w}(z)$ in this case is equivalent to a difficult problem in information theory. For this reason, we do not expect there to be a simple recursion for $\mathfrak{F}_{w}(z)$. Nevertheless, we will show that many techniques which are used to compute Schubert polynomials give partial information about fine Schubert polynomials.

\medskip

The \emph{Schubert polynomial} $\mathfrak{S}_w(x) \in \mathbb{Z}[x_1, \dotsc, x_n]$ is a polynomial which represents the class of the Schubert variety corresponding to $w$ in the cohomology of the flag variety. By \cite{KM05}, Schubert polynomials also encode the classes  of matrix Schubert varieties in the $(\mathbb{C}^*)^n$-equivariant cohomology of $\mathbb{A}^{n^2}$. Using this interpretation, we can bound the coefficients of fine Schubert polynomials by the coefficients of Schubert polynomials. For a polynomial $f$ and a monomial $m$, let $\llbracket m \rrbracket f$ denote the coefficient of $m$ in $f$.

\begin{proposition}\label{prop:coeffbound}
Let $w \in S_n$ be a permutation, and let $S$ be a subset of $[n]^2$ of size $\ell(w)$. Then
$$\llbracket z^S\rrbracket \mathfrak{F}_{w}(z) \le \llbracket \prod_{(i, j) \in S}x_i \rrbracket \mathfrak{S}_w(x).$$
\end{proposition}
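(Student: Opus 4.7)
The plan is to realize the coefficient $\llbracket z^S \rrbracket \mathfrak{F}_w(z)$ as the multiplicity of the coordinate subspace $V_S$ in a specific Gr\"obner degeneration of $X_w$, and then use that the row-torus-equivariant class is preserved along that degeneration. By Proposition~\ref{prop:finegrobner} I may choose a one-parameter subgroup $\varphi \colon \mathbb{G}_m \to T_N \coloneqq (\mathbb{G}_m)^{n^2}$ such that $V_S \coloneqq \{a_{ij} = 0 : (i,j) \in S\}$ appears in the Gr\"obner degeneration $Y \coloneqq \lim_{t \to 0} \varphi(t) \cdot X_w$ with multiplicity $m_S = \llbracket z^S \rrbracket \mathfrak{F}_w(z)$. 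Writing the underlying cycle as $[Y] = \sum_T m_T [V_T]$, all multiplicities $m_T$ are nonnegative and the sum ranges over $T \subset [n]^2$ with $|T| = \ell(w)$.

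Next, the row torus $T \coloneqq (\mathbb{G}_m)^n$ embeds as a subtorus of $T_N$ via $(t_1, \ldots, t_n) \mapsto (t_i)_{(i,j)}$, so in particular commutes with $\varphi$. The family $\{\varphi(t) \cdot X_w\}_{t \in \mathbb{A}^1}$ is therefore a flat family of $T$-invariant subschemes of $\mathbb{A}^{n^2}$, so the $T$-equivariant class is constant along it. Since the row torus acts on $a_{ij}$ with weight $x_i$, we have $[V_T]_T = \prod_{(i,j) \in T} x_i$, and $[X_w]_T = \mathfrak{S}_w(x)$ by \cite{KM05}. Consequently
$$\mathfrak{S}_w(x) = [X_w]_T = [Y]_T = \sum_T m_T \prod_{(i,j) \in T} x_i.$$

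Finally, extracting the coefficient of $\prod_{(i,j) \in S} x_i$ on both sides and using nonnegativity of the $m_T$'s to isolate the $T = S$ term yields
$$\llbracket \prod_{(i,j) \in S} x_i \rrbracket \mathfrak{S}_w(x) = \sum_{T : \prod_{(i,j) \in T} x_i = \prod_{(i,j) \in S} x_i} m_T \ge m_S = \llbracket z^S \rrbracket \mathfrak{F}_w(z).$$
The only delicate point is the invariance of the $T$-equivariant class along the degeneration, which holds because $T$ commutes with $\varphi$ inside the abelian torus $T_N$, so the entire family is automatically $T$-equivariant.
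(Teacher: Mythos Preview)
Your proof is correct and takes a genuinely different route from the paper's. The paper works in the row-wise projective completion $(\mathbb{P}^n)^n$: it realizes $\llbracket z^S\rrbracket\mathfrak{F}_w(z)$ as the number of reduced points in a generic affine slice $\overline{X}_w \cap \bigcap_{(i,j)\in S^c}\{a_{i,j}=\lambda_{i,j}\}$ via Bertini, then compares with the intersection number computed in $(\mathbb{P}^n)^n$, which equals $\llbracket \prod_{(i,j)\in S} x_i\rrbracket\mathfrak{S}_w(x)$ by Lemma~\ref{lem:classYw}; the inequality comes from nonnegativity of the contributions of the extra components at infinity, using that the tangent bundle of $(\mathbb{P}^n)^n$ is globally generated (\cite[Theorem~12.2(a)]{Fulton}). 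Your argument instead invokes Proposition~\ref{prop:finegrobner} to realize the fine coefficient as a multiplicity in a monomial initial scheme, and then uses invariance of the row-torus multidegree under Gr\"obner degeneration together with nonnegativity of the remaining multiplicities. Your approach is more in keeping with the Gr\"obner-theoretic theme of the paper and sidesteps the excess-intersection positivity machinery; the paper's approach, on the other hand, is independent of Proposition~\ref{prop:finegrobner} (which the paper explicitly flags as ``not needed in the sequel''), and the same geometric template in $(\mathbb{P}^n)^n$ transports directly to other monotonicity statements such as Remark~\ref{rem:schubertpattern}.
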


In particular, if the coefficients of $\mathfrak{S}_w(x)$ are all either $0$ or $1$, then the same is true for $\mathfrak{F}_{w}(z)$. As $X_w$ is isomorphic to $X_{w^{-1}}$ via the map that sends a matrix to its transpose, we have $\mathfrak{F}_{w}(z_{i,j})_{1 \le i, j \le n} = \mathfrak{F}_{w^{-1}}(z_{j,i})_{1 \le j, i \le n}$. 
Therefore, we can also bound the coefficients of $\mathfrak{F}_{w}(z)$ by the coefficients of $\mathfrak{S}_{w^{-1}}(x)$. This gives the following corollary.

\begin{corollary}
Let $w \in S_n$ be a permutation such that the coefficients of either $\mathfrak{S}_w(x)$ or $\mathfrak{S}_{w^{-1}}(x)$ are all either $0$ or $1$. Then the coefficients of $\mathfrak{F}_{w}(z)$ are all either $0$ or $1$. 
\end{corollary}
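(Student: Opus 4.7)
The plan is to deduce this as a direct consequence of Proposition~\ref{prop:coeffbound} combined with the transpose symmetry $X_w \cong X_{w^{-1}}$ noted in the paragraph immediately preceding the corollary. The key observation is that the coefficients of $\mathfrak{F}_w(z)$ are nonnegative integers (as discussed earlier, they are intersection-theoretic degrees, or equivalently counts of completions of generic partial matrices), so it suffices to show that every coefficient is at most $1$.

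First I would handle the case where $\mathfrak{S}_w(x)$ has all coefficients in $\{0,1\}$. Fix a subset $S \subseteq [n]^2$ with $|S| = \ell(w)$. Proposition~\ref{prop:coeffbound} gives
\[
\llbracket z^S\rrbracket \mathfrak{F}_w(z) \;\le\; \llbracket \textstyle\prod_{(i,j)\in S} x_i\rrbracket \mathfrak{S}_w(x) \;\in\; \{0,1\}.
\]
Combined with nonnegativity, this forces $\llbracket z^S\rrbracket \mathfrak{F}_w(z)\in\{0,1\}$ for every $S$, which is all that is required since $\mathfrak{F}_w(z)$ is homogeneous of degree $\ell(w)$ and squarefree.

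Second, I would reduce the case where $\mathfrak{S}_{w^{-1}}(x)$ has $\{0,1\}$ coefficients to the previous case via transpose. The isomorphism $X_w \to X_{w^{-1}}$ sending a matrix to its transpose identifies the variables $z_{i,j}$ with $z_{j,i}$, so $\mathfrak{F}_w\bigl((z_{i,j})\bigr) = \mathfrak{F}_{w^{-1}}\bigl((z_{j,i})\bigr)$. Hence for the set $S^{T} = \{(j,i) : (i,j)\in S\}$, which has the same size as $S$ (and $\ell(w^{-1}) = \ell(w)$), one has $\llbracket z^S\rrbracket \mathfrak{F}_w(z) = \llbracket z^{S^{T}}\rrbracket \mathfrak{F}_{w^{-1}}(z)$, and applying the first case to $w^{-1}$ completes the proof.

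There is no real obstacle here: all the substantive input (the multidegree comparison bound and the transpose symmetry) has already been established, so the corollary is a short combinatorial bookkeeping argument. The only thing to be careful about is remembering to use nonnegativity of the coefficients of $\mathfrak{F}_w(z)$ to upgrade the upper bound of $1$ to the conclusion that every coefficient lies in $\{0,1\}$.
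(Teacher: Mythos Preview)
Your proposal is correct and matches the paper's own argument essentially verbatim: the paper derives the corollary directly from Proposition~\ref{prop:coeffbound} together with the transpose identity $\mathfrak{F}_{w}(z_{i,j}) = \mathfrak{F}_{w^{-1}}(z_{j,i})$, exactly as you do. No separate proof is given in the paper beyond the sentence of explanation preceding the corollary.
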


Permutations $w$ such that the coefficients of $\mathfrak{S}_w(x)$ are all either $0$ or $1$ were classified in \cite{FMS} in terms of a pattern avoidance condition, and this family of permutations has attracted significant attention, see, e.g., \cite{PS22,KleinDiagonal,zeroonegrothendieck,CDGsurvey}.

If all coefficients of a fine Schubert polynomial are either $0$ or $1$, then a result of Brion \cite{BrionMultiplicity} implies that $\overline{X}_w$ has remarkable properties: $\overline{X}_w$ is  projectively normal, arithmetically Cohen--Macaulay, and has rational singularities. Indeed, Brion's result shows that  any integral subvariety $X$ such that the coefficients of $\mathfrak{F}_{\overline{X}}(z)$ are all either $0$ or $1$ will enjoy these properties.

\subsection{Universal Gr\"{o}bner bases for a family of matrix Schubert varieties}

We apply Theorem~\ref{thm:irreducible} to the matrix Schubert varieties $X_w$ of a certain family of permutations. This gives a universal Gr\"{o}bner basis for the ideal of $X_w$ and computes $\mathfrak{F}_{w}(z)$ when $w$ is in this family.

For $1\le p,q \le n-1$, let \[r_{p\times q}(w):=\#\{(i,j)\le (p,q): w(i)=j\}.\]
Let $A$ denote the $n\times n$ matrix whose $(i, j)$th entry is the indeterminate $a_{i,j}$, and let $A_{p\times q}$ be the submatrix of $A$ with rows in $1,\ldots,p$ and columns in $1,\ldots, q$. The ideal $I_w$ of $X_w$ is generated by all minors of size $1+r_{p\times q}(w)$ in $A_{p\times q}$ for all 
\[(p,q)\in \mathrm{ess}(w):=\{(i,j)\in [n-1]^2: w(i)>j, w^{-1}(j)>i, w(i+1)\le j, w^{-1}(j+1)\le i\},\]
see \cite{FultonDegen}. The set $\mathrm{ess}(w)$ is called the \emph{essential set} for $w$, and
 these minors are called the \emph{Fulton generators}. 
Following the terminology in \cite{HPW22}, we also recall the definition of the \emph{CDG generators}. These are obtained from the Fulton generators by setting the monomials that contain variables already in $I_w$ to 0, whenever the Fulton generator is a minor of size at least 2. 
If a generator factors after this specialization, it is redundant and we exclude it
from $\mathrm{CDG}(w)$.
Denote the set of CDG generators for $I_w$ by $\mathrm{CDG}(w)$. It is immediate that the CDG generators also generate $I_w$. 
We now define an iterative procedure that produces a set of generators 
$\mathcal{R}_w$ from the CDG generators of $I_w$. 

Let $f_1,f_2\in \mathbb{Z}[a_{i,j} : (i,j) \in [n]^2]$, and suppose $f_1=a_{\alpha,\beta}g_1+r_1$ and $f_2=a_{\alpha,\beta}g_2+r_2$, where no term in $g_1, g_2, r_1$, or $r_2$ is divisible by $a_{\alpha, \beta}$. Define
\[\mathrm{merge}_{(\alpha,\beta)}(f_1,f_2):=f_1g_2-f_2g_1 = r_1 g_2 - r_2 g_1.\]

The set $\mathcal{R}_w$ is constructed as follows. For $S\subset [n]^2$, let 
$\mathrm{row}(S)$ and $\mathrm{col}(S)$
denote the set of row and column indices
of $S$.

\begin{enumerate}
    \item Initialize $\mathcal{R}_w:=\mathrm{CDG}(w)$;
    \item 
    Let 
    \begin{align*}
        R:=\{&\mathrm{merge}_{(\alpha,\beta)}(f_1,f_2) :  f_1\in \mathcal{R}_w, f_2\in \mathrm{CDG}(w), \, \operatorname{spr}(f_1)\cap \operatorname{spr}(f_2)=\{(\alpha,\beta)\}\\
        &\mathrm{row}(\operatorname{spr}(f_1))\cap \mathrm{row}(\operatorname{spr}(f_2))=\{\alpha\}, \, \mathrm{col}(\operatorname{spr}(f_1))\cap \mathrm{col}(\operatorname{spr}(f_2))=\{\beta\}\}.
    \end{align*}
    Enlarge $\mathcal{R}_w$ by $R$. 
    \item Repeat Step (2) until no new relations are added in this step.
\end{enumerate}
By construction, each element in $\mathcal{R}_w$ is squarefree-supported.

\begin{example}
Let $w=426153.$ There are two relations
in $\mathcal{R}_w$ that are not in
$\mathrm{CDG}(w)$. 
Let 
$f_1=\left| \begin{smallmatrix}
    0 & a_{1,4} & a_{1,5} \\
    a_{2,2} & a_{2,4} & a_{2,5}\\
    a_{3,2} & a_{3,4} & a_{3,5}
\end{smallmatrix}\right|$, 
$f_2=\left| \begin{smallmatrix}
    0 & a_{1,4} & a_{1,5} \\
    a_{2,3} & a_{2,4} & a_{2,5}\\
    a_{3,3} & a_{3,4} & a_{3,5}
\end{smallmatrix}\right|$,
$f_3=\left| \begin{smallmatrix}
    0 & a_{2,2} & a_{2,3} \\
    a_{4,1} & a_{4,2} & a_{4,3}\\
    a_{5,1} & a_{5,2} & a_{5,3}
\end{smallmatrix}\right|$, and
$f_4=\left| \begin{smallmatrix}
    0 & a_{3,2} & a_{3,3} \\
    a_{4,1} & a_{4,2} & a_{4,3}\\
    a_{5,1} & a_{5,2} & a_{5,3}
\end{smallmatrix}\right|$.
Then $h_1:=\mathrm{merge}_{(3,2)}(f_1,f_4)=-\mathrm{merge}_{(2,3)}(f_2,f_3)$
and $h_2:=\mathrm{merge}_{(2,2)}(f_1,f_3)=-\mathrm{merge}_{(3,3)}(f_2,f_4)$
are the two extra relations.
The sets $\operatorname{spr}(h_1)$
and $\operatorname{spr}(h_2)$ are shown
in Figure~\ref{fig:426153}. 
\begin{figure}[h]
    \centering
    \includegraphics[width=0.5\linewidth]{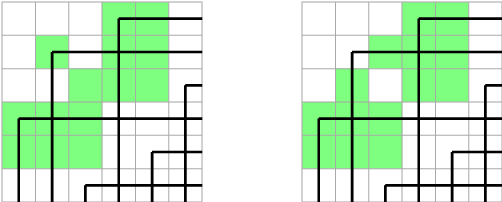}
    \caption{Spreads of generators in $\mathcal{R}_{w}$ not in $\mathrm{CDG}(w)$ for $w=426153$.}
    \label{fig:426153}
\end{figure}
\end{example}

One can show that, in general, the elements $f \in \mathcal{R}_w$ can be described as determinants of matrices whose entries are $0$ except on the spread of $f$. 

\begin{theorem}\label{thm:main}
Let $w$ be a permutation such that all coefficients of both $\mathfrak{S}_w(x)$ and $\mathfrak{S}_{w^{-1}}(x)$ are either $0$ or $1$. 
Then $\Delta(f : f \in \mathcal{R}_w)$ is pure of dimension $n^2 - \ell(w) - 1$, and for each facet $U$ of $\Delta(f : f \in \mathcal{R}_w)$, $\llbracket z^{U^c}\rrbracket \mathfrak{F}_{w}(z) = 1$. 
In particular, $\mathcal{R}_w$ is a universal Gr\"{o}bner basis for $I_w$. 
\end{theorem}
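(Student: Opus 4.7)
The plan is to verify the two hypotheses of Theorem~\ref{thm:irreducible} for the generating set $\mathcal{R}_w$. Using the transpose identity $\mathfrak{F}_w(z_{i,j}) = \mathfrak{F}_{w^{-1}}(z_{j,i})$, Proposition~\ref{prop:coeffbound} applied to both $w$ and $w^{-1}$ shows that the $0/1$ hypothesis on $\mathfrak{S}_w$ and $\mathfrak{S}_{w^{-1}}$ forces every coefficient of $\mathfrak{F}_w(z)$ to be either $0$ or $1$. Hence any positive facet coefficient automatically equals $1$, and the universal Gr\"obner basis conclusion will follow from Theorem~\ref{thm:irreducible} once we prove purity of $\Delta(\mathcal{R}_w)$ in dimension $n^2 - \ell(w) - 1$ together with positivity of $\llbracket z^{U^c}\rrbracket \mathfrak{F}_w(z)$ on every facet $U$.

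As a preliminary, I would prove by induction on the construction of $\mathcal{R}_w$ that each $f \in \mathcal{R}_w$ is the determinant of a square matrix whose entries vanish outside $\operatorname{spr}(f)$. The CDG generators have this form by definition, and the row/column transversality conditions in $\mathrm{merge}_{(\alpha,\beta)}(f_1,f_2)$ are arranged precisely so that one can glue the matrix realizations of $f_1$ and $f_2$ along their shared entry $a_{\alpha,\beta}$ into a block-like matrix whose determinant equals $\mathrm{merge}_{(\alpha,\beta)}(f_1,f_2)$, as illustrated by the $w = 426153$ example. This uniform determinantal description gives a concrete handle on how spreads combine under the merge procedure.

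The heart of the proof is then the following claim: \emph{for every $T \subseteq [n]^2$ with $\llbracket z^{T}\rrbracket \mathfrak{F}_w(z) = 0$, some $f \in \mathcal{R}_w$ satisfies $\operatorname{spr}(f) \subseteq [n]^2 \setminus T$}. Combined with Lemma~\ref{lem:spreadprojection}, this claim identifies the facets of $\Delta(\mathcal{R}_w)$ with the sets $U \subseteq [n]^2$ of size $n^2 - \ell(w)$ satisfying $\llbracket z^{U^c}\rrbracket \mathfrak{F}_w > 0$: Lemma~\ref{lem:spreadprojection} gives such $U$ as faces of $\Delta(\mathcal{R}_w)$; applying the claim to $T$ of size less than $\ell(w)$ forces every face to have size at most $n^2 - \ell(w)$; and applying the claim to non-support $T$ of size exactly $\ell(w)$ forces positivity on every facet. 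Purity and positivity then follow simultaneously.

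I would attack the claim by giving an intrinsic combinatorial description of the support of $\mathfrak{F}_w$ for the $0/1$ permutations in question, using the pattern-avoidance classification of \cite{FMS} together with the degree-of-projection interpretation of the coefficients of $\mathfrak{F}_w$. Given $T$ outside this support, one then constructs an explicit $f \in \mathcal{R}_w$ with $\operatorname{spr}(f) \subseteq T^c$ by iterated merges of CDG generators, proceeding by induction on $\ell(w)$ or on the size of the essential set of $w$. The $0/1$ hypothesis on both $\mathfrak{S}_w$ and $\mathfrak{S}_{w^{-1}}$ is indispensable here: it guarantees that the minimal-spread elements of $I_w$ can be realized as single determinants rather than as genuine sums of monomials, which is precisely what makes the merge procedure complete. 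Matching this combinatorial description of the support to the reachable spreads of the merge procedure is, I expect, the main obstacle of the proof; once it is in place, Theorem~\ref{thm:irreducible} delivers $\mathcal{R}_w$ as a universal Gr\"obner basis.
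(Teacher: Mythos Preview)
Your reduction is correct: the statement follows from Theorem~\ref{thm:irreducible} once one knows that every $\mathcal{R}_w$-avoiding set $U$ is $X_w$-completable (your ``claim'' is the contrapositive of this, restated via Proposition~\ref{prop:spreadmultidegree}). You also correctly observe that Proposition~\ref{prop:coeffbound} applied to $w$ and $w^{-1}$ forces all coefficients of $\mathfrak{F}_w$ to be $0$ or $1$, so positivity on facets is automatic once purity and support are in hand. This is exactly the skeleton of the paper's argument, formulated there as Theorem~\ref{thm:Rwequiv}.

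The gap is in your proposed attack on the claim. You suggest first giving an \emph{intrinsic combinatorial description of the support of $\mathfrak{F}_w$} and then matching it against the reachable spreads of the merge procedure. The paper explicitly says (see the paragraph after Theorem~\ref{thm:main} in the introduction, and the remark after Proposition~\ref{prop:rank1GB}) that no such explicit description of the facets of $\Delta(\mathcal{R}_w)$, or equivalently of the support of $\mathfrak{F}_w$, is known for this class of permutations. The rank~$1$ determinantal case works precisely because the support is the set of spanning trees of $K_{p,q}$; for general $\mathcal{P}$-avoiding $w$ there is no analogous closed form, and you offer no indication of what one might look like.

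Instead, the paper proves Theorem~\ref{thm:Rwequiv} by an \emph{indirect} induction that never names the support. Given $U$ that is $\mathcal{R}_w$-avoiding, one constructs a ``smaller'' permutation $w'$ (still avoiding $\mathcal{P}$, by Corollary~\ref{cor:del-preserve-p} and Proposition~\ref{prop:cotrans-preserves-P}) and a related set $U'$ that is $\mathcal{R}_{w'}$-avoiding; by induction $U'$ is $X_{w'}$-completable; and then one lifts back to $X_w$-completability of $U$ using the geometric inequalities of Section~\ref{sec:geom}: cotransition (Proposition~\ref{prop:cotransition}), transition (Proposition~\ref{prop:transition}), deletion of special or solid rows/columns (Propositions~\ref{prop:deletearbitrary}, \ref{prop:deleteoneoff}, \ref{prop:deletesolid}), and pattern containment (Proposition~\ref{prop:patterncontainment}). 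The induction parameter is the size of the smallest rectangle containing $\Phi(w)$, then $|\Phi(w)\setminus\dom(w)|$. Choosing which tool to apply in each step requires the structural characterization of $D(w)$ for $\mathcal{P}$-avoiding $w$ established in Section~\ref{sec:patterns} (Lemmas~\ref{lem:rank-1-diagram} and~\ref{lem:high-rank-diagram}), and the case analysis is substantial. None of this machinery appears in your proposal, and it is the actual content of the proof.
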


The second part of Theorem~\ref{thm:main} follows from the first part and Theorem~\ref{thm:irreducible}.
If $U$ is not a facet of $\Delta(f : f \in \mathcal{R}_w)$, then $\llbracket z^{U^c}\rrbracket \mathfrak{F}_{w}(z) = 0$, so Theorem~\ref{thm:main} computes $\mathfrak{F}_{w}(z)$. Explicitly, we have
$$\mathfrak{F}_{w}(z) = \sum_{\substack{|S| = \ell(w),  \\ S \cap \operatorname{spr}(f)\neq\emptyset \text{ for all }f \in \mathcal{R}_w}} \prod_{(i,j) \in S} z_{i,j}.$$

\begin{example}
    For $w=426153$, we describe the terms in $\mathfrak{F}_w(z)$. Let $A=\{1,2,3\}\times \{4,5\}$, $B=\{2,3\}\times \{2,3\}$, and 
    $C=\{4,5\}\times\{1,2,3\}$. Then $\llbracket z^S\rrbracket \mathfrak{F}_w$ is nonzero if and only if $S$ contains
    $\{(1,1), (1,2), (1,3), (2,1), (3,1)\}$ as well as three more elements,  chosen in one of the four ways: (1) one element each from $A$, $B$, and $C$;
    (2) one element from $A$, and two elements from $B$ that 
    are not in the same row;
    (3) one element from $C$, and 
    two elements from $B$ that
    do not use the same column;
    or
    (4) three elements from $B$. In all cases,
    $\llbracket z^S\rrbracket \mathfrak{F}_w=1$.
\end{example}
A permutation $w$ for which the matrix Schubert variety corresponds to the variety of $p \times q$ matrices of rank at most $r$, with $p \le q$, satisfies the condition in Theorem~\ref{thm:main} if and only if $r \in \{0, 1, p-1, p\}$. 
Theorem~\ref{thm:main} generalizes the known universal Gr\"{o}bner bases for the ideals of maximal minors and the ideals of $2 \times 2$ minors. Prior to our work, universal Gr\"{o}bner bases for the ideals of matrix Schubert varieties were known in only a few cases beyond this, such as matrix Schubert varieties which are, up to affine factors, products of the above cases
\cite[Proposition 3.2]{HPW22}.

We give an explicit description in terms of pattern avoidance of the permutations $w$ for which all coefficients of both $\mathfrak{S}_w(x)$ and $\mathfrak{S}_{w^{-1}}(x)$ are either $0$ or $1$, see Proposition~\ref{prop:patterncriterion}. 
We also give a positive description of these permutations in terms of their Rothe diagrams, see Theorem~\ref{thm:characterization}. See Figure~\ref{fig:largeperm} for an example
of a large permutation in this class.

\begin{remark}\label{rem:characteristic}
Matrix Schubert varieties can be defined over any field, and one can formulate the definition of the fine Schubert polynomial over any field (using Chow groups instead of homology). One can check that the multidegree of the closure of a matrix Schubert variety in $(\mathbb{P}^1)^{n^2}$ depends only on the characteristic of the field, but it is not obvious that it is independent of the field. See \cite[Section 2.3]{BDGGL} for a discussion in the case of determinantal varieties. The proofs of all statements in this paper work over any field. In particular, for any $w$ such that all coefficients of both $\mathfrak{S}_w(x)$ and $\mathfrak{S}_{w^{-1}}(x)$ are equal to $0$ or $1$, the multidegree of the closure of $X_w$ in $(\mathbb{P}^1)^{n^2}$ is independent of the characteristic, and $\mathcal{R}_w$ is a universal Gr\"{o}bner basis. 
\end{remark}

For arbitrary $w$, the relations in $\mathcal{R}_w$ are not a universal Gr\"{o}bner basis, and they are not enough to compute the support of $\mathfrak{F}_{w}(z)$. See Example~\ref{ex:21543}.

Unlike the applications of Theorem~\ref{thm:irreducible} to recover existing universal Gr\"{o}bner basis results, the proof of Theorem~\ref{thm:main} relies on a complicated induction. This is because we do not have an explicit description of the facets of $\Delta(f : f \in \mathcal{R}_w)$. 
We instead develop several tools to relate both $\mathfrak{F}_{w}(z)$ and the faces of $\Delta(f : f \in \mathcal{R}_w)$ to fine Schubert polynomials of ``smaller'' permutations. We show that these tools, taken together, are enough to understand all of the facets of $\Delta(f : f \in \mathcal{R}_w)$.  These tools include a version of Lascoux's transition formula \cite{Lascoux} (Proposition~\ref{prop:transition}) and Knutson's cotransition formula \cite{KnutsonCotransition} (Proposition~\ref{prop:cotransition}). 
We highlight one tool, which shows that fine Schubert polynomials behave well with respect to pattern containment. For $w\in S_n$, let
\[D(w):=\{(i,j)\in [n]^2: w(i)>j, w^{-1}(j)>i\}\]
be the \textbf{Rothe diagram} of $w$. Define a \textbf{region} of $D(w)$ to be a maximal subset of $D(w)$ connected via adjacency. Note that
for all $(p,q)$ in
a region, $r_{p\times q}(w)$ is the same. We call this the \textbf{rank} of this region.

\begin{proposition}\label{prop:patterncontainment}
Let $w \in S_n$ be a permutation. For some $k \in [n]$, let $v \in S_{n-1}$ be the permutation given by the relative ordering of $w(1), \dotsc, w(k-1), w(k+1), \dotsc, w(n)$. 
Set $M = \prod z_{i,j}$, where the product is over pairs $(i, j)$ with $(i, j) \in D(w)$ and $i = k$ or $j = w(k)$. Then
$$\mathfrak{F}_{w}(z) = M \cdot \mathfrak{F}_{v}(z_{i,j})_{i \in [n] \setminus \{k\}, j \in [n] \setminus \{w(k)\}} + E,$$
where $E$ has non-negative coefficients. 
\end{proposition}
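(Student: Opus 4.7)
The plan is to prove the inequality termwise via a Schur complement bijection between matrix completions, exploiting nonnegativity to handle the error term $E$ automatically.

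By nonnegativity of the coefficients of $\mathfrak{F}_w$, showing $E \ge 0$ reduces to proving
\[
\llbracket z^{T \cup S'} \rrbracket \mathfrak{F}_w \ge \llbracket z^{S'} \rrbracket \mathfrak{F}_v
\]
for every $S' \subset \{(i,j) : i \ne k, j \ne w(k)\}$ with $|S'| = \ell(v)$. For subsets $S \subset [n]^2$ not of the form $T \cup S'$, the right-hand side of the proposition has vanishing coefficient on $z^S$ and nonnegativity of $\mathfrak{F}_w$ alone gives the required inequality. The combinatorial equality $|T| = \ell(w) - \ell(v)$, immediate from comparing the Rothe diagrams of $w$ and $v$, ensures $|T \cup S'| = \ell(w)$, so these coefficients are the relevant ones.

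Using the interpretation $\llbracket z^S \rrbracket \mathfrak{F}_w = \#\{A \in X_w : a_{ij} = \alpha_{ij} \text{ for all } (i,j) \in S^c\}$ for generic $\alpha$, I would establish a bijection between $X_v$-completions and $X_w$-completions via the Schur complement at the pivot $(k, w(k))$. On the open chart $\{a_{k,w(k)} \ne 0\} \subset \mathbb{A}^{n^2}$, the map
\[
\Phi : A \mapsto B, \qquad B_{i,j} := a_{\sigma(i), \tau(j)} - a_{\sigma(i), w(k)} \cdot a_{k, \tau(j)} / a_{k, w(k)} \qquad (i, j \in [n-1]),
\]
with $\sigma, \tau$ the order-preserving bijections onto $[n] \setminus \{k\}$ and $[n] \setminus \{w(k)\}$, is an isomorphism onto $\mathbb{A}^{(n-1)^2} \times \mathbb{A}^{2n-1}$. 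The rank identity $\mathrm{rank}(A_{p \times q}) = 1 + \mathrm{rank}(B_{(p-1)\times(q-1)})$ for $k \le p$, $w(k) \le q$, combined with the corresponding comparison $r_{p \times q}(w) = 1 + r_{(p-1)\times(q-1)}(v)$ in this case, exchanges the rank conditions of $X_w$ at ``primary'' essential boxes with rank conditions of $X_v$; essential boxes disjoint from $(k, w(k))$ (i.e.\ $k > p$, $w(k) > q$) give $X_v$-conditions directly; and ``mixed'' essential boxes ($k \le p$, $w(k) > q$, or $k > p$, $w(k) \le q$) impose linear conditions on the row $k$/col $w(k)$ entries of $A$ whose coefficients depend on $B$.

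A dimension count then completes the argument. The restriction $\Phi : X_w \cap \{a_{k,w(k)} \ne 0\} \to X_v \times \mathbb{A}^{2n-1}$ is an embedding onto a subvariety of codimension exactly $\ell(w) - \ell(v) = |T|$, cut out precisely by the mixed-case linear conditions. Fixing a generic $B \in X_v$ and generic values of the row $k$/col $w(k)$ entries outside $T$, these $|T|$ linear conditions on the $|T|$-dimensional space of $T$-entries form a generically invertible system, yielding a unique lift $A \in X_w$ for each completion $B \in X_v$. Summing over $B$ gives the equality $\llbracket z^{T \cup S'} \rrbracket \mathfrak{F}_w = \llbracket z^{S'} \rrbracket \mathfrak{F}_v$, and hence the inequality.

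The main obstacle I expect is the dimension-count step, namely verifying that the system of mixed-case linear conditions is generically invertible on the $T$-entries. The equality $|T| = \ell(w) - \ell(v)$ guarantees the correct match between number of equations and unknowns, but confirming genericity of the system likely requires a finer analysis of how essential boxes of $w$ in the ``shadow'' of the position $(k, w(k))$ interact with the positions in $T = D(w) \cap (\text{row }k \cup \text{col }w(k))$; the dimension count via the Schur complement morphism $\Phi$ provides the cleanest route, reducing the check to the codimension of $\Phi(X_w \cap \{a_{k,w(k)}\ne 0\})$ in $X_v \times \mathbb{A}^{2n-1}$.
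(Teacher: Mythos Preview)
Your Schur-complement map does not carry $X_w\cap\{a_{k,w(k)}\neq 0\}$ into $X_v\times\mathbb{A}^{2n-1}$, so the core of the argument breaks down. The problem is with your ``disjoint'' case: for an essential box $(p,q)$ of $w$ with $p<k$ and $q<w(k)$, the $X_w$-condition is $\mathrm{rank}(A_{p\times q})\le r$, a condition on the original entries $a_{i,j}$ with $i\le p$, $j\le q$. But in your coordinates $(B,\text{row }k,\text{col }w(k))$ we have $a_{i,j}=B_{i,j}+a_{i,w(k)}a_{k,j}/a_{k,w(k)}$, so this rank condition is \emph{not} the $X_v$-condition $\mathrm{rank}(B_{p\times q})\le r$; it genuinely mixes $B$ with the row/column data. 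Concretely, take $w=21354$ and $k=4$, so $w(k)=5$ and $v=2134$. The essential box $(1,1)$ of $w$ forces $a_{1,1}=0$, but $B_{1,1}=a_{1,1}-a_{1,5}a_{4,1}/a_{4,5}=-a_{1,5}a_{4,1}/a_{4,5}$, which is generically nonzero. Hence $\Phi(A)\notin X_v\times\mathbb{A}^{2n-1}$ for generic $A\in X_w$. The same issue prevents the ``mixed'' conditions from being linear in the $T$-entries alone once disjoint boxes are present, and your claimed \emph{equality} $\llbracket z^{T\cup S'}\rrbracket\mathfrak{F}_w=\llbracket z^{S'}\rrbracket\mathfrak{F}_v$ is false in general (the proposition only asserts an inequality, and the error term $E$ is typically nonzero).

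The paper's argument avoids all of this by working in $(\mathbb{P}^1)^{n^2}$ and intersecting $\overline{X}_w$ with the affine-linear subvariety $Z$ obtained by setting $a_{k,w(k)}=1$ and $a_{k,j}=a_{i,w(k)}=0$ for those $(k,j),(i,w(k))\notin D(w)$. One checks (via Fulton generators) that $X_v$ sits inside $X_w\cap Z$, and a tangent-space computation at the permutation matrix $p_w$ shows it is an irreducible component of the expected dimension. Then Fulton's refined intersection theory on $(\mathbb{P}^1)^{n^2}$ gives $[\overline{X}_w]\smallsmile[\overline{Z}]=[\overline{X}_v]+E$ with $E$ effective, and reading off multidegrees yields the inequality directly. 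No explicit bijection between completions is needed; the possible excess contribution from other components of $\overline{X}_w\cap\overline{Z}$ is exactly what produces the nonnegative remainder $E$.
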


In particular, the family of $w \in S_n$ where $\mathfrak{F}_{w}(z)$ has all coefficients equal to either  $0$ or $1$ is closed under pattern containment. Proposition~\ref{prop:patterncontainment} is inspired by \cite[Theorem 1.2]{FMS}, which gives a version of Proposition~\ref{prop:patterncontainment} for Schubert polynomials. Our proof of Proposition~\ref{prop:patterncontainment} generalizes to give a simple geometric proof of \cite[Theorem 1.2]{FMS}, see Remark~\ref{rem:schubertpattern}.

Our paper is organized as follows. In Section~\ref{sec:grobner}, we study the relationship between Gr\"{o}bner bases and closures of varieties in a product of projective lines, and we prove Theorem~\ref{thm:irreducible}. In Section~\ref{sec:geom}, we develop several inductive tools to study fine Schubert polynomials and prove Propositions~\ref{prop:coeffbound} and \ref{prop:patterncontainment}. In Section~\ref{sec:patterns}, we analyze the permutations $w$ with all coefficients of both $\mathfrak{S}_w(x)$ and $\mathfrak{S}_{w^{-1}}(x)$ equal to $0$ or $1$. In Section~\ref{sec:proof}, we prove Theorem~\ref{thm:main}. 
\medskip

\noindent
\textbf{Acknowledgments}:
We thank Aldo Conca, Patricia Klein, Allen Knutson, Karen Smith, Bernd Sturmfels, and Josephine Yu for helpful conversations, and we thank Dave Anderson, Sara Billey, and William Fulton for helpful comments on an earlier version of this paper. We thank Elizabeth Pratt for giving us the example in Remark~\ref{rem:lex}. We thank the referees for their close reading and useful suggestions. 
Macaulay2 \cite{M2} was invaluable for computations throughout this project, and we thank Christopher Eur for providing code to compute multihomogenizations of ideals. ChatGPT 5.5 Pro was used to proofread this paper, but the content of the paper was created by the authors. 
This project began during the ICERM ``Spring 2021 Reunion Event.''
We thank Melody Chan for her excellent organization of that event. DH was supported by NSF-DMS2202900 and the Charles Simonyi Endowment at the Institute for Advanced Study. 

\section{Fine multidegrees and Gr\"{o}bner bases}\label{sec:grobner}

In this section, we prove some results describing how the ideal of an affine variety relates to the homology classes of the closure of that variety in a product of projective lines. We then prove Theorem~\ref{thm:irreducible}. We illustrate some applications of Theorem~\ref{thm:irreducible} in Section~\ref{ssec:determinantal}. 

\subsection{Fine multidegrees}

For a closed subscheme $Y$ of $(\mathbb{P}^1)^N$ whose irreducible components all have codimension $d$, let $\mathfrak{F}_Y(z) \in \mathbb{Z}[z_1, \dotsc, z_N]$ be the unique squarefree-supported polynomial such that 
$$[Y] = \mathfrak{F}_Y([H_1], \dotsc, [H_N]) \smallfrown [(\mathbb{P}^1)^N].$$
Then $\mathfrak{F}_Y(z)$ is homogeneous of degree $d$ and has nonnegative coefficients.
We call $\mathfrak{F}_Y(z)$ the \textbf{fine multidegree polynomial} of $Y$.
If $Y$ is irreducible, then this is the covolume polynomial of $Y$ in the sense of \cite{Aluffi}, and it is a typical example of a dually Lorentzian polynomial in the sense of \cite{RSW}. 

Let $\mathbb{C}[a_1, \dotsc, a_N, b_1, \dotsc, b_N]$ be the coordinate ring of $(\mathbb{P}^1)^N$, which has a $\mathbb{Z}^{N}$ grading, with the degree of $a_i$ and $b_i$ equal to the $i$th standard basis vector $\mathbf{e}_i$. Then the ideal $\mathcal{I}_Y$ of $Y$ in $(\mathbb{P}^1)^N$ is also $\mathbb{Z}^N$-graded. When $Y$ is pure dimensional, the fine multidegree polynomial can be computed in terms of the fine Hilbert series of $\mathcal{I}_Y$, i.e., the $\mathbb{Z}^N$-graded Hilbert series of $\mathcal{I}_Y$, see, e.g., \cite[Section 8.5]{MillerSturmfels}.

Let $X \subset \mathbb{A}^N$ be a closed subscheme, and let $\overline{X}$ be the closure of $X$ in $(\mathbb{P}^1)^N$. 
Let $\mathbb{C}[a_1, \dotsc, a_N]$ be the coordinate ring of $\mathbb{A}^N$, and let ${I}_X \subset \mathbb{C}[a_1, \dotsc, a_N]$ be the ideal of $X$. For $f \in \mathbb{C}[a_1, \dotsc, a_N]$, let $f^h \in \mathbb{C}[a_1, \dotsc, a_N, b_1, \dotsc, b_N]$ be the multihomogenization of $f$, i.e., $f^h$ is the multihomogeneous polynomial of minimal degree such that $f^h(a_1, \dotsc, a_N, 1, \dotsc, 1) = f(a_1, \dotsc, a_N)$.  Let
$$\mathcal{I}_X^h := (f^h : f \in {I}_X).$$ 
Note that $\mathcal{I}_X^h = \mathcal{I}_{\overline{X}}$, the ideal of $\overline{X}$ in $\mathbb{C}[a_1, \dotsc, a_N, b_1, \dotsc, b_N]$. 

For a nonzero polynomial $f \in \mathbb{C}[a_1, \dotsc, a_N]$, the \textbf{spread} of $f$ is the subset
$$\operatorname{spr}(f) = \{i : a_i \text{ appears in a monomial with nonzero coefficient in } f\} \subset [N].$$
I.e., the spread of $f$ is the variables which it uses. Similarly, for a nonzero $f \in \mathbb{C}[a_1, \dotsc, a_N, b_1, \dotsc, b_N]$, we set
$$\operatorname{spr}(f) = \{i : a_i \text{ or }b_i \text{ appears in a monomial with nonzero coefficient in } f\} \subset [N].$$
Note that $\operatorname{spr}(f) = \operatorname{spr}(f^h)$, and if $g \in \mathbb{C}[a_1, \dotsc, a_N, b_1, \dotsc, b_N]$ is multihomogeneous of degree $(d_1, \dotsc, d_N)$, then $\operatorname{spr}(g) = \{i : d_i \not= 0\}$. 

\medskip

We now prove a few results which relate the fine multidegree of $\overline{X}$ to the spreads of elements of $I_X$. 
For $U \subset [N]$, let $\pi_U \colon (\mathbb{P}^1)^N \to (\mathbb{P}^1)^U$ be the coordinate projection. 

\begin{lemma}\label{lem:spreadprojection}
Let $Y$ be a closed subscheme of $(\mathbb{P}^1)^N$.
For $U \subset [N]$, we have $\pi_U(Y) \not= (\mathbb{P}^1)^U$ if and only if there is  a nonzero $f \in \mathcal{I}_Y$ with $\operatorname{spr}(f) \subset U$. 
\end{lemma}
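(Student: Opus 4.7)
The plan is to use two standard ingredients: that $(\mathbb{P}^1)^N$ is a product of projective varieties so all coordinate projections are proper, and that $\mathcal{I}_Y$ is a $\mathbb{Z}^N$-graded ideal so its multihomogeneous components split off cleanly. Throughout, I will identify polynomials in $\mathbb{C}[a_i, b_i : i \in U]$ with the pullback along $\pi_U$ of elements of the Cox ring of $(\mathbb{P}^1)^U$, and use that the $\mathbb{Z}^U$-grading on the former matches the $\mathbb{Z}^N$-grading on the latter restricted to the subring of polynomials with spread in $U$.

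First I would reduce to the multihomogeneous case. Because $\mathcal{I}_Y \subseteq \mathbb{C}[a_1, \dots, a_N, b_1, \dots, b_N]$ is $\mathbb{Z}^N$-graded, every multihomogeneous component of $f \in \mathcal{I}_Y$ also lies in $\mathcal{I}_Y$. If $\operatorname{spr}(f) \subseteq U$, then $f$ involves no variable indexed outside $U$, so each multihomogeneous component has multidegree supported on $U$ and therefore also has spread in $U$. Choosing a nonzero such component, we may assume $f$ is multihomogeneous of some multidegree $(d_1, \dots, d_N)$ with $d_i = 0$ for $i \notin U$.

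For the reverse implication, such an $f$ descends to a nonzero multihomogeneous element $\tilde f$ of the Cox ring of $(\mathbb{P}^1)^U$. For any point $y \in Y$, the value of $f(y)$ (in the usual sense of the Cox ring) depends only on the coordinates of $\pi_U(y)$, so $\tilde f$ vanishes on $\pi_U(Y)$. Hence the closed subscheme $V(\tilde f) \subsetneq (\mathbb{P}^1)^U$ contains $\pi_U(Y)$, so $\pi_U(Y)$ is a proper (not necessarily closed) subset of $(\mathbb{P}^1)^U$.

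For the forward implication, I would note that $\pi_U$ is a projection from a product of proper varieties and is therefore proper, so the set-theoretic image $\pi_U(Y)$ is closed in $(\mathbb{P}^1)^U$. If $\pi_U(Y) \neq (\mathbb{P}^1)^U$, then the multihomogeneous ideal $\mathcal{I}_{\pi_U(Y)}$ in the Cox ring of $(\mathbb{P}^1)^U$ contains some nonzero multihomogeneous element $\tilde f$, and its pullback $f := \pi_U^* \tilde f$ is a nonzero multihomogeneous element of $\mathcal{I}_Y$ with $\operatorname{spr}(f) \subseteq U$. The only step requiring care is confirming that a proper closed subscheme of $(\mathbb{P}^1)^U$ really does have a nonzero multihomogeneous element in its ideal, which follows from the standard description of closed subschemes of a smooth toric variety via saturated multigraded ideals in the Cox ring. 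I do not anticipate a real obstacle here; the substantive content is just that spreads of generators record which projections are not dominant, and the lemma is essentially bookkeeping about multigraded ideals and proper projections.
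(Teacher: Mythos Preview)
Your proof is correct and follows essentially the same approach as the paper: both directions use that $\pi_U$ is closed (proper), identify polynomials with spread in $U$ with multihomogeneous polynomials on $(\mathbb{P}^1)^U$, and argue via containment in or existence of a hypersurface. Your version simply adds a preliminary reduction to the multihomogeneous case and phrases things in Cox-ring language, but the substance is identical.
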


\begin{proof}
Because $\pi_U$ is a closed map, $\pi_U(Y)$ is a Zariski closed subset of $(\mathbb{P}^1)^U$, so it is a proper subset if and only if it is contained in some divisor, i.e., if there is a nonzero multihomogeneous polynomial $g \in \mathbb{C}[a_i, b_i : i \in U]$ such that $\pi_U(Y) \subset V(g)$. Then $g$ defines an element of $\mathcal{I}_Y$ with spread contained in $U$, via the inclusion of $\mathbb{C}[a_i, b_i : i \in U]$ into $\mathbb{C}[a_1, \dotsc, a_N, b_1, \dotsc, b_N]$. 

For the reverse direction, if $\operatorname{spr}(f) \subset U$ for some nonzero $f \in \mathcal{I}_Y$, then $f$, viewed as an element of $\mathbb{C}[a_i, b_i : i \in U]$, will vanish on $\pi_U(Y)$, so $\pi_U(Y)$ must be properly contained in $(\mathbb{P}^1)^U$. 
\end{proof}

\begin{lemma}\label{lem:dehomogenize}
There is a nonzero $g \in \mathcal{I}_X^h$ with $\operatorname{spr}(g) = C$ if and only if there is a nonzero $f \in {I}_X$ with $\operatorname{spr}(f) = C$. 
\end{lemma}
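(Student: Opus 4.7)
The plan is to prove each direction separately, using a simple padding trick to upgrade ``spread contained in $C$'' to ``spread equal to $C$''. The reverse direction is immediate: given a nonzero $f \in I_X$ with $\operatorname{spr}(f) = C$, set $g = f^h$, which by construction lies in $\mathcal{I}_X^h$ and satisfies $\operatorname{spr}(g) = \operatorname{spr}(f^h) = \operatorname{spr}(f) = C$.

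For the forward direction, given nonzero $g \in \mathcal{I}_X^h$ with $\operatorname{spr}(g) = C$, my first move is to reduce to the case where $g$ is multihomogeneous. Since $\mathcal{I}_X^h = (f^h : f \in I_X)$ is generated by multihomogeneous elements, it is a $\mathbb{Z}^N$-graded ideal, so I can decompose $g = \sum_\beta g_\beta$ into multihomogeneous components with each $g_\beta \in \mathcal{I}_X^h$. I pick any nonzero $g_\beta$; its spread is contained in $\operatorname{spr}(g) = C$. I then dehomogenize, setting $f_0 := g_\beta|_{b_1 = \cdots = b_N = 1}$; this lies in $I_X$ because $\mathcal{I}_X^h$ restricts to $I_X$ under $b_i \mapsto 1$. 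The key point is that $f_0$ is nonzero: a multihomogeneous polynomial of multidegree $d = (d_1, \dotsc, d_N)$ has monomials of the form $c_\alpha \prod_i a_i^{\alpha_i} b_i^{d_i - \alpha_i}$ parametrized by exponent vectors $\alpha$ with $\alpha_i \leq d_i$, and setting $b = 1$ produces the monomials $c_\alpha \prod_i a_i^{\alpha_i}$, which remain distinct, so no cancellation occurs. Without the multihomogeneous reduction, the naive dehomogenization $g|_{b = 1}$ can vanish; e.g., $g = a_1 b_2 - a_1 b_2^2 \in (a_1)$ has $g|_{b=1} = 0$.

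Now $\operatorname{spr}(f_0) \subseteq C$ may be strict, so I pad: define $f := f_0 \cdot \prod_{j \in C \setminus \operatorname{spr}(f_0)} a_j$. This is nonzero, lies in $I_X$, and has spread exactly $C$. The only subtlety in the argument is recognizing the need for the multihomogeneous reduction to bypass cancellation in the dehomogenization; once that is in hand, the padding step is routine.
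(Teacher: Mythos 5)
Your proof is correct and uses the same essential ingredients as the paper's: the easy direction via $\operatorname{spr}(f) = \operatorname{spr}(f^h)$, and for the converse, reducing to a multihomogeneous component (where dehomogenization cannot cancel, since distinct exponent vectors $\alpha$ give distinct monomials $\prod_i a_i^{\alpha_i}$) and then padding by multiplying in missing variables $a_j$. The only difference is organizational: the paper first reduces to minimal elements of the poset of spreads, observing that padding inside $I_X$ handles non-minimal ones, and then invokes minimality to conclude $\operatorname{spr}(g(a,1,\dotsc,1)) = \operatorname{spr}(g)$; you instead dehomogenize an arbitrary multihomogeneous component, accept that its spread may be a proper subset of $C$, and pad at the very end. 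Your ordering is arguably a touch cleaner, since it makes the padding step do all the work explicitly rather than relying on minimality to force equality of spreads after dehomogenization, but both arguments are essentially the same.
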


\begin{proof}
One direction follows from the fact that $\operatorname{spr}(f) = \operatorname{spr}(f^h)$. 

For the other direction, note that if $S$ is the spread of some $0 \not= g \in \mathcal{I}_X^h$ and $T \supset S$, then $T = \operatorname{spr}(a^{T \setminus S} \cdot g)$. It therefore suffices to show that the minimal elements of $\{\operatorname{spr}(g) : 0 \not= g \in \mathcal{I}_X^h\}$ are the spreads of elements in $I_X$. 

As the spread of $g \in \mathcal{I}_X^h$ is the union of the spreads of its multihomogeneous parts, we may assume that $g$ is multihomogeneous. Then $\operatorname{spr}(g(a_1, \dotsc, a_N, 1, \dotsc, 1))$ is a subset of $\operatorname{spr}(g)$, and we can obtain an element of $I_X$ with the correct spread by multiplying by an appropriate monomial. 
\end{proof}

The following lemma is closely related to the fact that, when $Y$ is irreducible, the support of the fine multidegree polynomial of $Y$ is the bases of the dual of the algebraic matroid of $Y$. 

\begin{lemma}\label{lem:algebraicmatroid}
Suppose all irreducible components of $Y$ have codimension $d$. If $\pi_U(Y) = (\mathbb{P}^1)^U$, then there is $V \supset U$ with $|V| = N - d$ and $\pi_V(Y) = (\mathbb{P}^1)^V$. 
\end{lemma}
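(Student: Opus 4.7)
The plan is to use the algebraic matroid perspective: the subsets $U \subset [N]$ for which $\pi_U(Y) = (\mathbb{P}^1)^U$ are precisely those for which the coordinate functions $\{x_i = a_i/b_i : i \in U\}$ restrict to algebraically independent rational functions on some irreducible component of $Y$. Since bases of the algebraic matroid of an irreducible variety all have size equal to its dimension, one can extend such a $U$ to a set of size $\dim Y = N - d$ and still have a surjective projection.

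First, I would reduce to the irreducible case. The image $\pi_U(Y) = \bigcup_{Y'} \pi_U(Y')$ is a finite union of closed subsets (as $\pi_U$ is proper), so if the union equals $(\mathbb{P}^1)^U$ then by irreducibility of $(\mathbb{P}^1)^U$ there is some irreducible component $Y'$ of $Y$ with $\pi_U(Y') = (\mathbb{P}^1)^U$. By hypothesis $\dim Y' = N - d$, so it suffices to produce $V \supset U$ of size $N - d$ with $\pi_V(Y') = (\mathbb{P}^1)^V$.

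Next, I would pass to function fields. Because $\pi_U|_{Y'} \colon Y' \to (\mathbb{P}^1)^U$ is dominant, the pullback $\pi_U^* \colon K((\mathbb{P}^1)^U) = \mathbb{C}(x_i : i \in U) \hookrightarrow K(Y')$ is injective, so the restrictions $\{x_i|_{Y'} : i \in U\}$ are algebraically independent over $\mathbb{C}$. Since $\operatorname{tr.deg}_\mathbb{C} K(Y') = \dim Y' = N - d$, we can extend $\{x_i|_{Y'} : i \in U\}$ to a transcendence basis $\{x_i|_{Y'} : i \in V\}$ of $K(Y')$ by adjoining appropriate coordinate functions; this yields $V \supset U$ with $|V| = N - d$. (We may choose the additional coordinates among the $x_i$ because, for each $j \notin U$, either $x_j|_{Y'}$ is algebraic over the field generated by the current basis, in which case we skip it, or it is transcendental, in which case we add it, continuing until the transcendence degree reaches $N - d$.)

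Finally, I would verify that this $V$ works. The map $\pi_V^* \colon \mathbb{C}(x_i : i \in V) \to K(Y')$ is injective by construction, so $\pi_V|_{Y'}$ is dominant. Since $(\mathbb{P}^1)^N \to (\mathbb{P}^1)^V$ is a proper morphism, $\pi_V(Y')$ is closed, hence $\pi_V(Y') = (\mathbb{P}^1)^V$, and a fortiori $\pi_V(Y) = (\mathbb{P}^1)^V$. The only delicate point is the extension-of-transcendence-basis step, but since we only need to add coordinate functions (not arbitrary elements of $K(Y')$), the standard exchange argument for transcendence bases in field theory applies directly.
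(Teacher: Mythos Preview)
Your proof is correct and follows essentially the same route as the paper: reduce to an irreducible component that dominates $(\mathbb{P}^1)^U$, then use that the coordinate functions generate the function field, so an algebraically independent set of coordinates can be extended to a transcendence basis of size $N-d$. The paper phrases the inductive step via Lemma~\ref{lem:spreadprojection} (extracting an explicit relation $f_k$ when $\pi_{U\cup k}$ fails to surject), while you invoke the transcendence-basis exchange directly, but the underlying argument is the same.
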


\begin{proof}
By replacing $Y$ with the reduction of a component which dominates $(\mathbb{P}^1)^U$, we can assume that $Y$ is integral. It suffices to show that if $|U| < N - d$, then we can find $W \supset U$ with $|W| = |U| + 1$ and $\pi_W(Y) = (\mathbb{P}^1)^W$. 

For each $k \in [N] \setminus U$, if $\pi_{U \cup \{k\}}(Y) \not = (\mathbb{P}^1)^{U \cup \{k\}}$, then by Lemma~\ref{lem:spreadprojection} there is a nonzero $f_k \in \mathcal{I}_Y$ whose spread is contained in $U \cup \{k\}$.  As $\pi_U(Y) = (\mathbb{P}^1)^U$, the spread of $f_k$ must contain $k$. Inside of the function field of $Y$, $f_k$ gives an algebraic relation satisfied by $a_k$ over the field generated by $\{a_\ell : \ell \in U\}$. 
If this holds for all $k \in [N] \setminus U$, this implies that the function field of $Y$ (which is generated by $\{a_1, \dotsc, a_N\}$) is algebraic over the subfield generated by $\{a_\ell : \ell \in U\}$. But this contradicts that $|U| < N - d$. 
\end{proof}

\begin{lemma}\label{lem:proj}
Suppose all irreducible components of $X$ have codimension $d$. For any $U \subset [N]$ with $|U| = N - d$, $\pi_U(\overline{X}) = (\mathbb{P}^1)^U$ if and only if $\llbracket z^{U^c} \rrbracket \mathfrak{F}_{\overline{X}}(z) > 0$.
\end{lemma}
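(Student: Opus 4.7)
The plan is to identify the coefficient $\llbracket z^{U^c}\rrbracket \mathfrak{F}_{\overline{X}}(z)$ with the geometric degree of the restricted projection $\pi_U \colon \overline{X} \to (\mathbb{P}^1)^U$; the equivalence in the lemma is then essentially the statement that this degree is positive if and only if the map is surjective. First I would unwind the definition of the fine multidegree polynomial. Since the basis element in $H_{2(N-d)}((\mathbb{P}^1)^N)$ corresponding to $U^c$ is Poincar\'e-dual to the cohomology class $\prod_{i \in U}[H_i]$, extracting the $z^{U^c}$-coefficient amounts to computing
$$\llbracket z^{U^c}\rrbracket \mathfrak{F}_{\overline{X}}(z) = \deg\!\bigl([\overline{X}]\cdot \textstyle\prod_{i \in U}[H_i]\bigr).$$
Next I would observe that $\prod_{i \in U}[H_i] = \pi_U^{\ast}[\mathrm{pt}]$ for any point $\mathrm{pt} \in (\mathbb{P}^1)^U$, and apply the projection formula to rewrite the right-hand side as $\deg\!\bigl(\pi_{U,\ast}[\overline{X}]\cdot[\mathrm{pt}]\bigr)$.

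The next step is to analyze $\pi_{U,\ast}[\overline{X}]$ componentwise. Decompose $[\overline{X}] = \sum m_i[Y_i]$ into irreducible components, all of codimension $d$ and hence of dimension $|U| = N-d$. For each component $Y_i$, the image $\pi_U(Y_i)$ is closed; it either equals $(\mathbb{P}^1)^U$, in which case $\pi_U|_{Y_i}$ is surjective between irreducible varieties of the same dimension and is therefore generically finite of some positive degree with $\pi_{U,\ast}[Y_i] = (\deg \pi_U|_{Y_i})\,[(\mathbb{P}^1)^U]$, or it has strictly smaller dimension, in which case $\pi_{U,\ast}[Y_i] = 0$ for dimension reasons. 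Consequently,
$$\llbracket z^{U^c}\rrbracket \mathfrak{F}_{\overline{X}}(z) \;=\; \sum_{\pi_U(Y_i)=(\mathbb{P}^1)^U} m_i\cdot \deg(\pi_U|_{Y_i}),$$
a sum of strictly positive integers.

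Finally I would conclude the equivalence. If $\pi_U(\overline{X}) = (\mathbb{P}^1)^U$, then as $(\mathbb{P}^1)^U$ is irreducible and $\pi_U(\overline{X}) = \bigcup \pi_U(Y_i)$ is a finite union of closed subsets, at least one $Y_i$ must satisfy $\pi_U(Y_i) = (\mathbb{P}^1)^U$, so the sum above is nonempty and the coefficient is positive. Conversely, if the coefficient is positive, then some component dominates, hence $\pi_U(\overline{X}) = (\mathbb{P}^1)^U$. I expect the only mildly delicate point is the componentwise pushforward step: one must justify that non-dominating components contribute zero (standard for proper pushforwards of fundamental classes) and that dominating components contribute positively (using that source and target are equidimensional, so dominance forces generic finiteness of positive degree). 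Everything else is bookkeeping with the definition of $\mathfrak{F}_{\overline{X}}$ and the projection formula.
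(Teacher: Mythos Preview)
Your proposal is correct and follows essentially the same approach as the paper's proof, which simply observes that since $\dim \overline{X} = \dim (\mathbb{P}^1)^U$, the projection is surjective if and only if the pushforward $\pi_{U,\ast}[\overline{X}]$ is nonzero, and identifies this pushforward with the coefficient $\llbracket z^{U^c}\rrbracket \mathfrak{F}_{\overline{X}}(z)$ under the canonical isomorphism $H_{2(N-d)}((\mathbb{P}^1)^U)\cong\mathbb{Z}$. Your version unpacks this by going through the projection formula and an explicit component-by-component analysis of the pushforward, which is precisely the content behind the paper's terse one-line justification; nothing is different except the level of detail.
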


\begin{proof}
As $\dim \overline{X} = \dim \, (\mathbb{P}^1)^U$, the map $\overline{X} \to (\mathbb{P}^1)^U$ is surjective if and only if the pushforward of $[\overline{X}] \in H_{2(N - d)}((\mathbb{P}^1)^N)$ to $H_{2(N - d)}((\mathbb{P}^1)^U)$ is nonzero. Under the natural identification of $H_{2(N - d)}((\mathbb{P}^1)^U)$ with $\mathbb{Z}$, the pushforward of $[\overline{X}]$ is equal to $\llbracket z^{U^c} \rrbracket \mathfrak{F}_{\overline{X}}(z)$. 
\end{proof}

\begin{proposition}\label{prop:spreadmultidegree}
If all irreducible components of $Y$ have codimension $d$, then we can describe the spreads of nonzero elements of $\mathcal{I}_Y$, as follows. For $S \subset [N]$,
$$\text{there is }0 \not= f \in \mathcal{I}_Y \text{ with } \operatorname{spr}(f) \cap S = \emptyset \text{ if and only if } \llbracket z^T\rrbracket \mathfrak{F}_Y(z) = 0 \text{ for all }T \subset S \text{ with }|T| = d.$$
\end{proposition}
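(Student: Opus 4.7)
The plan is to translate the statement entirely into a condition about coordinate projections, and then use the preceding three lemmas as essentially plug-and-play tools.

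First I would use Lemma~\ref{lem:spreadprojection} to rewrite the left-hand side: the existence of a nonzero $f \in \mathcal{I}_Y$ with $\operatorname{spr}(f) \cap S = \emptyset$ is exactly the same as saying $\operatorname{spr}(f) \subset S^c$, which is equivalent to $\pi_{S^c}(Y) \neq (\mathbb{P}^1)^{S^c}$. Equivalently, the contrapositive statement I want to prove becomes
\[
\pi_{S^c}(Y) = (\mathbb{P}^1)^{S^c} \iff \exists \, T \subset S \text{ with } |T| = d \text{ and } \llbracket z^T \rrbracket \mathfrak{F}_Y(z) > 0.
\]

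Second, I would dispense with the degenerate size case: if $|S| < d$, then $|S^c| > N-d = \dim Y$, so $\pi_{S^c}(Y)$ cannot be all of $(\mathbb{P}^1)^{S^c}$ for dimension reasons, and there is no $T \subset S$ of size $d$ at all, so both sides are false and the equivalence is trivial. This leaves $|S| \geq d$, where the substantive work takes place.

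For the forward direction with $|S| \geq d$, suppose $\pi_{S^c}(Y) = (\mathbb{P}^1)^{S^c}$. Apply Lemma~\ref{lem:algebraicmatroid} iteratively (or directly, as it is stated) to extend $S^c$ to a set $V \supset S^c$ with $|V| = N - d$ and $\pi_V(Y) = (\mathbb{P}^1)^V$. Then $T := V^c$ is a subset of $S$ of size $d$, and Lemma~\ref{lem:proj} gives $\llbracket z^T \rrbracket \mathfrak{F}_Y(z) > 0$. For the reverse direction, suppose such a $T \subset S$ of size $d$ exists with $\llbracket z^T \rrbracket \mathfrak{F}_Y(z) > 0$. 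Lemma~\ref{lem:proj} gives $\pi_{T^c}(Y) = (\mathbb{P}^1)^{T^c}$, and since $S^c \subset T^c$, the projection $\pi_{S^c}$ factors through $\pi_{T^c}$ via the further projection $(\mathbb{P}^1)^{T^c} \to (\mathbb{P}^1)^{S^c}$, which is surjective on the full torus $(\mathbb{P}^1)^{T^c}$, so $\pi_{S^c}(Y) = (\mathbb{P}^1)^{S^c}$.

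There is no real obstacle here beyond keeping complements straight; the three lemmas just proved (\ref{lem:spreadprojection}, \ref{lem:algebraicmatroid}, \ref{lem:proj}) exactly encode the three needed implications between spreads, projections, and multidegree coefficients, and the proposition is essentially their composite. The one point worth double-checking is the very small case $|S| < d$, which is why I would isolate it before applying Lemma~\ref{lem:algebraicmatroid}, whose useful content requires $|S^c| \le N-d$.
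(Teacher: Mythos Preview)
Your proof is correct and follows essentially the same approach as the paper: both arguments reduce the statement to projection surjectivity via Lemma~\ref{lem:spreadprojection}, then invoke Lemma~\ref{lem:algebraicmatroid} and Lemma~\ref{lem:proj} to handle the two directions. Your version is simply more explicit---you take the contrapositive and isolate the trivial case $|S|<d$---whereas the paper's proof is a two-line summary of the same reasoning.
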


\begin{proof}
Lemma~\ref{lem:spreadprojection} implies that if $\operatorname{spr}(f) \cap S = \emptyset$ for some nonzero $f \in \mathcal{I}_Y$, then $\llbracket z^T\rrbracket \mathfrak{F}_Y(z) = 0 \text{ for all }T \subset S$ because $\pi_{T^c}(Y)$ is a proper subset of $(\mathbb{P}^1)^{T^c}$. The other direction follows from Lemma~\ref{lem:algebraicmatroid} and Lemma~\ref{lem:proj}.
\end{proof}

Recall that for nonzero polynomials $f_1, \dotsc, f_r$, $\Delta(f_1, \dotsc, f_r)$ is the simplicial complex on $[N]$ whose nonfaces are generated by the spreads of the $f_i$.

\begin{proposition}\label{prop:pure}
Let $f_1, \dotsc, f_r$ be a Gr\"{o}bner basis for $I_X$ with respect to every lexicographic term order. Assume that all $f_i$ are nonzero. Then $\Delta(f_1, \dotsc, f_r) = \{U : \pi_U(\overline{X}) = (\mathbb{P}^1)^U\}$. If all irreducible components of $X$ have codimension $d$, then $\Delta(f_1, \dotsc, f_r)$ is pure of dimension $N - d - 1$, and $U \subset [N]$ is a facet if and only if $\llbracket z^{U^c} \rrbracket \mathfrak{F}_{\overline{X}}(z) > 0$. 
\end{proposition}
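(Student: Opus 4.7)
The proof falls naturally into two parts: first establishing the set equality $\Delta(f_1,\dotsc,f_r) = \{U : \pi_U(\overline{X}) = (\mathbb{P}^1)^U\}$, and then deducing purity and the facet description as immediate consequences of the lemmas already proved. The Gr\"obner hypothesis is used only in the first part.

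For the set equality, the direction ``$U \notin \Delta \Rightarrow \pi_U(\overline{X}) \neq (\mathbb{P}^1)^U$'' is immediate from Lemma~\ref{lem:spreadprojection}: if $\operatorname{spr}(f_i) \subset U$, then $f_i^h$ is a nonzero element of $\mathcal{I}_{\overline{X}}$ with the same spread. For the reverse direction, suppose $\pi_U(\overline{X}) \neq (\mathbb{P}^1)^U$. Lemmas~\ref{lem:spreadprojection} and~\ref{lem:dehomogenize} together produce a nonzero $f \in I_X$ with $\operatorname{spr}(f) \subset U$, and I want to extract some $f_i$ with $\operatorname{spr}(f_i) \subset U$. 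The plan is to choose a lexicographic term order in which every variable indexed by $[N]\setminus U$ outranks every variable indexed by $U$. Then $LT(f)$ is a monomial in $U$-variables, and since $\{f_1,\dotsc,f_r\}$ is a Gr\"obner basis for this order, $LT(f_i)$ divides $LT(f)$ for some $i$, forcing $\operatorname{spr}(LT(f_i)) \subset U$. The key final step, which is where I expect the main subtlety, is to promote this to $\operatorname{spr}(f_i) \subset U$: the point is that if $f_i$ used any variable $a_j$ with $j \in [N]\setminus U$, then every monomial of $f_i$ involving $a_j$ would strictly dominate every monomial built from $U$-variables alone, so $LT(f_i)$ would necessarily have spread meeting $[N]\setminus U$, a contradiction. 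This is exactly why the hypothesis that $\{f_1,\dotsc,f_r\}$ is a Gr\"obner basis for \emph{every} lex order is used — we need the freedom to place all of $[N]\setminus U$ above all of $U$ for each choice of $U$.

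For the second part, suppose all irreducible components of $X$ have codimension $d$, so $\dim \overline{X} = N-d$. Then for any $U \in \Delta$ we have $|U| = \dim (\mathbb{P}^1)^U = \dim \pi_U(\overline{X}) \le \dim \overline{X} = N - d$. Conversely, Lemma~\ref{lem:algebraicmatroid} shows that any $U \in \Delta$ with $|U| < N-d$ extends to some $V \in \Delta$ with $|V| = N-d$, so $\Delta$ is pure of dimension $N-d-1$. The facets are precisely those $U$ with $|U| = N - d$ and $\pi_U(\overline{X}) = (\mathbb{P}^1)^U$, and Lemma~\ref{lem:proj} translates the latter into the coefficient condition $\llbracket z^{U^c}\rrbracket \mathfrak{F}_{\overline{X}}(z) > 0$.
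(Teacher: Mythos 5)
Your proof is correct and follows essentially the same route as the paper's: both directions of the set equality are handled via Lemma~\ref{lem:spreadprojection} (with Lemma~\ref{lem:dehomogenize}), the lex order with $U$-variables placed last is used exactly as in the paper to extract an $f_i$ with $\operatorname{spr}(f_i)\subset U$, and the purity and facet statements are deduced from Lemmas~\ref{lem:algebraicmatroid} and~\ref{lem:proj} in the same way. Your explicit justification of the ``promotion'' step ($\operatorname{spr}(\operatorname{in}_<(f_i))\subset U$ forces $\operatorname{spr}(f_i)\subset U$ for a lex order with $U$ last) is a spelled-out version of what the paper leaves as ``the choice of term order then implies.''
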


\begin{proof}
Let $\Delta = \{U : \pi_U(\overline{X}) = (\mathbb{P}^1)^U\}$. 
It follows from Lemma~\ref{lem:spreadprojection} and Lemma~\ref{lem:dehomogenize} that $\pi_U(\overline{X}) = (\mathbb{P}^1)^U$ if and only if there is no nonzero $g \in I_X$ with $\operatorname{spr}(g) \subset U$, so $\Delta \subset \Delta(f_1, \dotsc, f_r)$. 

Let $U$ be a nonface of $\Delta$, and let $<$ be a lexicographic term order on $\mathbb{C}[a_1, \dotsc, a_N]$ in which the elements of $U$ occur last. Because $U$ is a nonface, there is some nonzero $g \in I_X$ such that $\operatorname{spr}(g) \subset U$. Then $\operatorname{spr}(\operatorname{in}_<(g)) \subset U$, so there is some $f_i$ such that $\operatorname{spr}(\operatorname{in}_{<}(f_i)) \subset U$ because $f_1, \dotsc, f_r$ is a Gr\"{o}bner basis with respect to $<$. But the choice of term order then implies that $\operatorname{spr}(f_i) \subset U$, so $U$ is a nonface of $\Delta(f_1, \dotsc, f_r)$. 

Now suppose that all irreducible components of $X$ have codimension $d$. Then $\overline{X}$ cannot surject onto any $(\mathbb{P}^1)^U$ with $|U| > N - d$, so the dimension of $\Delta$ is at most $N - d - 1$. 
Lemma~\ref{lem:algebraicmatroid} then implies that $\Delta$ is pure of dimension $N - d - 1$, and Lemma~\ref{lem:proj} shows the characterization of the facets of $\Delta$ in terms of the fine multidegree. 
\end{proof}

\subsection{A Gr\"{o}bner basis criterion}
We now prove a generalization of Theorem~\ref{thm:irreducible} which does not assume that all irreducible components of $X$ have the same dimension. 

\begin{theorem}\label{thm:grobnercriterion}
Let $X$ be a closed subscheme of $\mathbb{A}^N$, and let $f_1, \dotsc, f_r \in {I}_X$ be nonzero squarefree-supported polynomials. Suppose that, for all facets $U$ of $\Delta(f_1, \dotsc, f_r)$, we have $\pi_{U}(\overline{X}) = (\mathbb{P}^1)^{U}.$
Then $\{f_1, \dotsc, f_r\}$ is a universal Gr\"{o}bner basis for ${I}_X$. If $X$ is pure dimensional, then all coefficients of $\mathfrak{F}_{\overline{X}}(z)$ are $0$ or $1$. 
\end{theorem}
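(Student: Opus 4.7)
My plan is to lift the given term order to the multi-homogenized ring so that the Gr\"obner basis property becomes an equality of multi-graded ideals, detectable via fine multidegrees. First, I would use the hypothesis together with Lemma~\ref{lem:spreadprojection} and Lemma~\ref{lem:dehomogenize} to show the intrinsic description
\[
\Delta(f_1,\dotsc,f_r) \;=\; \{U \subseteq [N] : \pi_U(\overline{X}) = (\mathbb{P}^1)^U\}.
\]
Both sides are simplicial complexes: for the right side, if $U \subseteq U'$ and $\pi_{U'}(\overline{X})=(\mathbb{P}^1)^{U'}$, then composing with the coordinate projection to $(\mathbb{P}^1)^U$ yields $\pi_U(\overline{X})=(\mathbb{P}^1)^U$. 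The hypothesis places every facet of the left side into the right, so by downward closure the entire left side is contained in the right. Conversely, if $\pi_U(\overline{X})=(\mathbb{P}^1)^U$, then Lemma~\ref{lem:spreadprojection} and Lemma~\ref{lem:dehomogenize} imply that no nonzero element of $I_X$ has spread in $U$, so no $f_i$ does, and $U$ is a face of $\Delta$.

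For an arbitrary term order $<$ on $\mathbb{C}[a_1,\dotsc,a_N]$, I would extend it to a term order $\prec$ on $R=\mathbb{C}[a_i, b_i]$ by declaring $a_i \succ b_i$, which ensures $\operatorname{in}_\prec(g^h) = (\operatorname{in}_<(g))^h$ for every $g \in \mathbb{C}[a_1,\dotsc,a_N]$ and hence $\operatorname{in}_\prec(\mathcal{I}_{\overline{X}}) = (\operatorname{in}_<(I_X))^h$. The universal Gr\"obner basis claim reduces to the ideal equality
\[
\widetilde J \;:=\; (\operatorname{in}_\prec(f_i^h) : i) \;=\; \operatorname{in}_\prec(\mathcal{I}_{\overline{X}}),
\]
with the containment $\widetilde J \subseteq \operatorname{in}_\prec(\mathcal{I}_{\overline{X}})$ automatic; equality then follows from equality of multi-graded Hilbert series. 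Squarefreeness of each $f_i$ makes $\operatorname{in}_\prec(f_i^h)$ a squarefree multi-homogeneous monomial of multidegree $\mathbf{1}_{\operatorname{spr}(f_i)}$, so $\widetilde J$ is a squarefree multi-graded monomial ideal and its vanishing locus in $(\mathbb{P}^1)^N$ is a reduced union of coordinate subspaces.

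Multi-graded Hilbert series, and hence the fine multidegree, is preserved under Gr\"obner degeneration, so in the pure-dimensional case $[V(\operatorname{in}_\prec(\mathcal{I}_{\overline{X}}))] = [\overline{X}] = \mathfrak{F}_{\overline{X}}(z) \smallfrown [(\mathbb{P}^1)^N]$. Distributing intersections identifies the top-dimensional components of $V(\widetilde J)$ with minimal transversals of the hypergraph on $[N]$ with edges $\{\operatorname{spr}(f_i)\}$, which by the first step correspond to facets of $\Delta$ of the appropriate size, with the $a/b$ choice at each transversal coordinate forced by the leading terms, each contributing a reduced coordinate subspace of class $z^{U^c}$. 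Since $V(\operatorname{in}_\prec(\mathcal{I}_{\overline{X}})) \subseteq V(\widetilde J)$ and both carry the same top multidegree, their top-dimensional components must coincide; this yields the $0$/$1$ statement in the pure case, since each coefficient of $\mathfrak{F}_{\overline{X}}(z)$ equals the multiplicity of a single reduced component in $V(\widetilde J)$, hence at most $1$.

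The main technical obstacle is promoting the matching of top-dimensional multidegrees to the equality of full multi-graded Hilbert series required for the ideal equality. I expect this to go through by an induction on codimension, peeling off matched facets of $\Delta$ and iterating the same criterion on sub-schemes obtained by restricting to lower coordinate strata. The general, possibly non-pure case is then handled by decomposing $X$ into its equidimensional components and applying the pure case to each.
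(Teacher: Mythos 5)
Your opening move is correct and matches the paper: establishing $\Delta(f_1,\dotsc,f_r)=\{U:\pi_U(\overline{X})=(\mathbb{P}^1)^U\}$, homogenizing, and extending the term order so that initial terms of $f_i^h$ are squarefree monomials lifting the nonfaces $\operatorname{spr}(f_i)$ to $[N,\bar N]$. However, there is a genuine gap at the step you flag yourself: you assert that ``the $a/b$ choice at each transversal coordinate is forced by the leading terms, each contributing a reduced coordinate subspace of class $z^{U^c}$,'' but this is precisely what requires proof, and it is the technical heart of the argument. The initial ideal $\widetilde J$ is Stanley--Reisner for a complex $\widetilde\Delta$ on $[N,\bar N]$ whose minimal nonfaces are lifts of the $\operatorname{spr}(f_i)$; a facet $U$ of $\Delta$ could, a priori, be extended to several different facets of $\widetilde\Delta$ (different $a/b$-signings), which would produce multiple components of $V(\widetilde J)$ dominating $(\mathbb{P}^1)^U$ and break the multiplicity-one claim. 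The paper isolates exactly this issue in Lemma~\ref{lem:tildelift}, a combinatorial deletion/link induction on ``full choices of lifts'' showing that each facet of $\Delta$ extends to a \emph{unique} facet of $\widetilde\Delta$ and, conversely, that no other facets of $\widetilde\Delta$ occur. Your proposed substitute---matching top-degree parts of the multigraded Hilbert series and ``peeling off facets'' by induction on codimension---is not a workable replacement: an inclusion $\widetilde J\subseteq\operatorname{in}_\prec(\mathcal I_{\overline X})$ plus agreement of top-dimensional multidegree does not force equality of ideals (lower-dimensional embedded components and the irrelevant-ideal primes are invisible to the multidegree), and no concrete mechanism is given for the induction to close.

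A second, smaller error is the reduction of the non-pure case to equidimensional components. The universal Gr\"obner basis conclusion in the theorem does not assume pure dimension, and you cannot obtain it by applying the pure case to the equidimensional pieces of $X$: $I_X$ is not the product or intersection of the ideals of those pieces in a way that transports Gr\"obner bases, and the hypothesis on $\Delta(f_1,\dotsc,f_r)$ does not descend to the pieces. The paper's argument handles the general case directly by comparing reduced schemes: once one knows $V(\widetilde J)$ is reduced with no components supported on the irrelevant ideal, and that for each facet $S$ of $\Delta$ there is a unique component of $V(\widetilde J)$ dominating $(\mathbb{P}^1)^S$, one deduces $V(\operatorname{in}_\prec(\mathcal I_{\overline X}))=V(\widetilde J)$ set-theoretically, hence scheme-theoretically since $V(\widetilde J)$ is reduced. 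The $0$/$1$ statement for pure-dimensional $X$ then falls out of the facet bijection. In short, the missing ingredient in your proposal is Lemma~\ref{lem:tildelift}, and without it the equality of ideals does not follow.
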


In particular, when the hypothesis of Theorem~\ref{thm:grobnercriterion} holds, all initial ideals of $X$ are reduced. The proof of Theorem~\ref{thm:grobnercriterion} shows that $\{f_1^h, \dotsc, f_r^h\}$ is a universal Gr\"{o}bner basis for $\mathcal{I}_X^h$.

\begin{proof}[Proof of Theorem~\ref{thm:irreducible}]
By Theorem~\ref{thm:grobnercriterion}, it suffices to check that if all irreducible components of $X$ have codimension $d$ and that $|U| = N - d$, then $\pi_U(\overline{X}) = (\mathbb{P}^1)^U$ if and only if $\llbracket z^{U^c} \rrbracket \mathfrak{F}_{\overline{X}}(z) > 0$. This is Lemma~\ref{lem:proj}.
\end{proof}

Note that Lemma~\ref{lem:algebraicmatroid} implies that if all irreducible components of $X$ have codimension $d$ and the hypothesis of Theorem~\ref{thm:grobnercriterion} is satisfied, then $\Delta(f_1, \dotsc, f_r)$ is pure of dimension $N - d - 1$.

In the proof of Theorem~\ref{thm:grobnercriterion}, we will need a combinatorial result. Let $\Delta$ be a simplicial complex on $[N]$, and let $[N, \bar{N}] = [1, \dotsc, N, \bar{1}, \dots, \bar{N}]$ be a set of size $2N$ equipped with the obvious involution $\bar{\cdot}$. Let $q \colon [N, \bar{N}] \to [N]$ be the quotient by $\bar{\cdot}$. 
For a minimal  nonface $G$ of $\Delta$, a \textbf{lift} $\widetilde{G}$ to $[N, \bar{N}]$ is a subset of $[N, \bar{N}]$ that contains exactly one of $\{i, \bar{i}\}$ for each $i \in G$ and has $q(\widetilde{G}) = G$. 

Suppose we have chosen a lift $\widetilde{G}$ for each minimal nonface $G$ of $\Delta$. Let $\widetilde{\Delta}$ be the complex on $[N, \bar{N}]$ whose minimal nonfaces are given by the $\widetilde{G}$. For each face $F$ of $\widetilde{\Delta}$, $\{i : \{i, \bar{i}\} \subset F\}$ is a face of $\Delta$. We say that the choice of lifts is \textbf{full} if, for each facet $F$ of $\Delta$, there is a face $\widetilde{F}$ of $\widetilde{\Delta}$ such that $\widetilde{F} \cap \{i, \bar{i}\} \not= \emptyset$ for all $i$, and $F = \{i : \{i, \bar{i}\} \subset \widetilde{F}\}$. We say that $\widetilde{F}$ \textbf{extends} $F$. 

\begin{lemma}\label{lem:tildelift}
Suppose that $\widetilde{\Delta}$ arises from a full choice of lifts. Then every facet of $\Delta$ has a unique facet of $\widetilde{\Delta}$ extending it, and all facets of $\widetilde{\Delta}$ extend a facet of $\Delta$. 
\end{lemma}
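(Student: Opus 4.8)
The plan is to track the combinatorics of lifts facet-by-facet. First I would unwind the definitions: a face $\widetilde{F}$ of $\widetilde{\Delta}$ is simply a subset of $[N,\bar N]$ that contains none of the chosen lifts $\widetilde{G}$; and for such an $\widetilde{F}$, the set $q(\widetilde{F}) \subseteq [N]$ is automatically a face of $\Delta$ (because if $q(\widetilde F)$ contained a minimal nonface $G$, then $\widetilde F$ would contain some lift of $G$ — here one must be slightly careful, since $\widetilde F$ might contain $i$ where $\widetilde G$ contains $\bar i$; but if $\widetilde F$ surjects onto $G$ it contains \emph{at least one} of $\{i,\bar i\}$ for each $i \in G$, and in particular if $\widetilde F$ contains \emph{both} $i,\bar i$ for each $i\in G$ it contains $\widetilde G$; so the relevant statement is really the one already recorded in the setup, that $\{i : \{i,\bar i\}\subset \widetilde F\}$ is a face of $\Delta$). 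Then I would introduce, for a face $F$ of $\Delta$, the candidate ``full'' lift $\widetilde F_{\max} := \bigcup_{i \in F}\{i,\bar i\} = q^{-1}(F)$, and the key claim to isolate is: $\widetilde F_{\max}$ is a face of $\widetilde\Delta$ if and only if $F$ is a face of $\Delta$ that is extended by \emph{some} $\widetilde F$, and in that case $\widetilde F_{\max}$ is the unique maximal face of $\widetilde\Delta$ lying over $F$.

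The crux is the following monotonicity observation, which I expect to be the main (though not deep) obstacle to state cleanly: if $\widetilde F$ extends $F$ — meaning $\widetilde F$ meets every pair $\{i,\bar i\}$ and $\{i : \{i,\bar i\}\subset \widetilde F\} = F$ — then actually $\widetilde F_{\max} = q^{-1}(F)$ is \emph{also} a face of $\widetilde\Delta$. Indeed, suppose not; then $q^{-1}(F)$ contains some chosen lift $\widetilde G$, so $q(\widetilde G) = G \subseteq F$, contradicting that $F$ is a face of $\Delta$ (as $G$ is a nonface). Wait — this shows $q^{-1}(F)$ contains a lift $\widetilde G$ forces $G \subseteq F$, impossible; so in fact $q^{-1}(F)$ is always a face of $\widetilde\Delta$ whenever $F$ is a face of $\Delta$. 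This is the real engine: \emph{for every face $F$ of $\Delta$, $q^{-1}(F)$ is a face of $\widetilde\Delta$.} (No fullness needed for this direction.) Conversely, if $\widetilde F$ is any face of $\widetilde\Delta$, then $q(\widetilde F)$ is a face of $\Delta$ by the definition-unwinding above, and $\widetilde F \subseteq q^{-1}(q(\widetilde F))$; combined with the previous sentence, $q^{-1}(q(\widetilde F))$ is the unique largest face of $\widetilde\Delta$ over the face $q(\widetilde F)$ of $\Delta$.

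With this in hand the lemma falls out. Given a facet $F$ of $\Delta$: fullness supplies some face $\widetilde F$ of $\widetilde\Delta$ extending $F$, so $F = q(\widetilde F')$ where $\widetilde F' := q^{-1}(F) \supseteq \widetilde F$ is a face of $\widetilde\Delta$ by the engine. I claim $\widetilde F'$ is a facet of $\widetilde\Delta$: any face $\widetilde H \supsetneq \widetilde F'$ would have $q(\widetilde H) \supseteq q(\widetilde F') = F$ a face of $\Delta$, hence $q(\widetilde H) = F$ by maximality of $F$, hence $\widetilde H \subseteq q^{-1}(F) = \widetilde F'$, a contradiction. So $\widetilde F' = q^{-1}(F)$ is a facet of $\widetilde\Delta$ extending $F$. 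Uniqueness: if $\widetilde F''$ is any facet of $\widetilde\Delta$ extending $F$, then $\widetilde F'' \subseteq q^{-1}(q(\widetilde F'')) = q^{-1}(F)$, which is itself a face of $\widetilde\Delta$, so maximality forces $\widetilde F'' = q^{-1}(F)$. Conversely, let $\widetilde F$ be any facet of $\widetilde\Delta$; then $F_0 := q(\widetilde F)$ is a face of $\Delta$, and $\widetilde F \subseteq q^{-1}(F_0)$, a face of $\widetilde\Delta$, so by maximality $\widetilde F = q^{-1}(F_0)$, whence $\{i : \{i,\bar i\}\subset \widetilde F\} = F_0 = q(\widetilde F)$ meets every pair — i.e. $\widetilde F$ extends $F_0$. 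Finally $F_0$ is a facet of $\Delta$: a face $F_1 \supsetneq F_0$ would give $q^{-1}(F_1) \supsetneq q^{-1}(F_0) = \widetilde F$ a strictly larger face of $\widetilde\Delta$, contradicting that $\widetilde F$ is a facet. I would present the two displayed facts ``$q$ of a face is a face'' and ``$q^{-1}$ of a face is a face'' as a preliminary paragraph, then do the four short arguments above; the only thing demanding genuine care is keeping the quantifiers in ``extends'' and ``full'' straight, since fullness is used exactly once (to know each facet of $\Delta$ is hit at all).
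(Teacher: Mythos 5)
Your proof has a genuine gap, rooted in a misreading of what ``extends'' means. In the paper's definition, a face $\widetilde{F}$ of $\widetilde{\Delta}$ extends $F$ when $\widetilde{F}\cap\{i,\bar{i}\}\neq\emptyset$ for \emph{every} $i\in[N]$ and $F=\{i:\{i,\bar{i}\}\subset\widetilde{F}\}$; such an $\widetilde{F}$ therefore contains $q^{-1}(F)$ \emph{together with} one element over each $i\notin F$, so for $F\neq[N]$ it strictly contains $q^{-1}(F)$. You instead argue as if the extension were $q^{-1}(F)$ itself, and in doing so you rely on the statement that $q(\widetilde{F})$ is a face of $\Delta$ for every face $\widetilde{F}$ of $\widetilde{\Delta}$ --- a statement you correctly flagged as unjustified in your opening paragraph and then used anyway (in the claim that $q^{-1}(F)$ is a facet, in the uniqueness step, and in the converse step where you set $F_0:=q(\widetilde{F})$). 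It is false: in the paper's own example, $\{1,\bar{2}\}$ is a face of $\widetilde{\Delta}$ but $q(\{1,\bar{2}\})=\{1,2\}$ is a minimal nonface of $\Delta$. The same example refutes your structural conclusions: for the facet $F=\{1,4\}$, the set $q^{-1}(F)=\{1,\bar{1},4,\bar{4}\}$ is indeed a face of $\widetilde{\Delta}$ (your ``engine'' is correct, but it is the easy part), yet it is neither a facet of $\widetilde{\Delta}$ nor an extension of $F$; the unique extending facet is $\{1,\bar{1},\bar{2},3,4,\bar{4},5\}$, which strictly contains $q^{-1}(F)$ and whose image under $q$ is all of $[5]$. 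The facets of $\widetilde{\Delta}$ in that example all have size $7$, not $4$, so they are not of the form $q^{-1}(F_0)$, and your uniqueness and surjectivity arguments built on that description collapse.

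What the lemma actually requires --- and what your argument never engages with --- is control over the elements lying over $[N]\setminus F$: one must show that among the ways of adjoining to $q^{-1}(F)$ exactly one of $\{i,\bar{i}\}$ for each $i\notin F$, precisely one choice yields a face of $\widetilde{\Delta}$ (fullness guarantees at least one; the combinatorics of the chosen lifts must be exploited to force at most one), and that every facet of $\widetilde{\Delta}$ meets every pair $\{i,\bar{i}\}$ and arises this way from a facet of $\Delta$. This is why the paper proceeds by induction on $N$, passing to the deletion and link of $N$ with their induced (still full) choices of lifts and sorting the faces of $\widetilde{\Delta}$ according to how they meet $\{N,\bar{N}\}$; fullness is used repeatedly in that induction, not just ``once.'' As written, your argument establishes only that $q^{-1}$ of a face of $\Delta$ is a face of $\widetilde{\Delta}$, which does not yield the lemma.
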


\begin{proof}
We induct on $N$; the statement has no content for $N = 0$. 
Let $D:=\{E\in \Delta: N\not\in E\}$ be
the deletion of $N$ and
$L:=\{E\in D:E\cup \{N\}\in \Delta\}$
be the link of $N$.

Each minimal nonface of $\Delta$ gives rise to a nonface of $L$, with a minimal nonface $G$ which contains $N$ giving rise to the nonface $G \setminus \{N\}$ of $L$, and a minimal nonface $H$ which does not contain $N$ giving rise to the nonface $H$ of $L$. Every minimal nonface of $L$ arises in this way, and these nonfaces all have induced choices of lifts.  Let $F$ be a facet of $\Delta$ which contains $N$. Because the choice of lifts is full, there is a facet $\widetilde{F}$ of $\widetilde{\Delta}$ extending $F$. Then $\widetilde{F} \setminus \{N, \bar{N}\}$ extends the facet $F \setminus \{N\}$ of $L$, so we deduce that this induced choice of lifts for $L$ is full. Let $\widetilde{L}$ be the corresponding complex, which is the link of $\{N, \bar{N}\}$ in $\widetilde{\Delta}$.

The faces of $L$ are of the form $F \setminus \{N\}$, where $F$ is a face of $\Delta$ containing $N$. If we take a face of $\widetilde{L}$, we can obtain faces of $\widetilde{\Delta}$ by adding any (possibly empty) subset of $\{N, \bar{N}\}$. 

Each minimal nonface $G$ of $\Delta$ which does not contain $N$ gives rise to a minimal nonface $G$ of $D$, and every minimal nonface of $D$ arises in this way. We have an induced choice of lifts for $D$. Because the choice of lifts for $\Delta$ is full, the choice of lifts for $D$ is full. Let $\widetilde{D}$ be the corresponding complex, which is $\widetilde{\Delta} \setminus \{N, \bar{N}\}$. 

There are two types of faces of $D$: faces $F$ such that $F \cup \{N\}$ is a face of $\Delta$, and faces $K$ such that $K \cup \{N\}$ is not a face of $\Delta$. Such faces $F$ also correspond to faces of $L$. We can obtain a face of $\widetilde{\Delta}$ by taking a face of the first kind in $\widetilde{D}$ and adding a subset of $\{N, \bar{N}\}$; we get the same faces from $\widetilde{L}$. 

Let $\widetilde{K}$ be a face of $\widetilde{D}$ which corresponds to the second kind of face of $D$. We know that $\widetilde{K}$ is a face of $\widetilde{\Delta}$ and that $\widetilde{K} \cup \{N, \bar{N}\}$ is not a face of $\widetilde{\Delta}$. Also, at most one of $\widetilde{K} \cup \{N\}$ and $\widetilde{K} \cup \{\bar{N}\}$ is a face of $\widetilde{\Delta}$ (because $K \cup \{N\}$ contains at least one minimal nonface of $\Delta$). However, as the lift of $\widetilde{\Delta}$ is full, for each facet of the second kind in $D$, we can extend it to a facet of $\widetilde{\Delta}$ that contains $N$ or $\bar{N}$. So we can in fact extend $\widetilde{K}$ in exactly one way. 

This describes all of the faces of $\widetilde{\Delta}$. 
The claim follows, as this description of the facets of $\widetilde{\Delta}$ is in bijection with the facets of $\Delta$. 
\end{proof}

\begin{example}
Suppose that $N=5$ and $\Delta$ has facets $\{1,4\}, \{1,5\}, \{2,5\}, \{3,5\}$. If we choose the lifts
$$\{\bar{4}, \bar{5}\}, \{\bar{3}, \bar{4}\}, \{\bar{2}, \bar{3}\}, \{\bar{1}, 2\}, \{\bar{1}, \bar{3}\}, \{2, \bar{4}\}$$
of the minimal nonfaces of $\Delta$, then the facets of $\widetilde{\Delta}$ are 
$$\{1, \bar{1}, \bar{2}, 3, 4, \bar{4}, 5\}, \{1, \bar{1}, \bar{2}, 3, 4, 5, \bar{5}\}, \{1, 2, \bar{2}, 3, 4, 5, \bar{5}\}, \{1, 2, 3, \bar{3}, 4, 5, \bar{5}\},$$
and so this choice of lifts is full. 
\end{example}

\begin{example}
The external activity complex of a matroid \cite{ArdilaBoocher} is an example of a full choice of lifts $\widetilde{\Delta}$, where $\Delta$ is the independence complex of the matroid. Lemma~\ref{lem:tildelift} can be used to give a quick proof of \cite[Theorem 1.5]{ArdilaBoocher}.  More generally, the external activity complex of a pair of matroids \cite{BergetFink} is an example of a full choice of lifts of the independence complex of the diagonal Dilworth truncation of a pair of matroids \cite[Proposition 7.3]{BergetFink}. It does not seem easy to directly verify the hypothesis of Lemma~\ref{lem:tildelift} in this case.  
\end{example}

\medskip

By the Stanley--Reisner correspondence, the irreducible components of the vanishing locus in affine space of a squarefree monomial ideal are in bijection with the facets of the simplicial complex whose nonfaces are the squarefree monomials in the ideal, see, e.g., \cite[Theorem 1.7]{MillerSturmfels}. We will need a variation of this for vanishing loci of squarefree monomial ideals in $(\mathbb{P}^1)^N$.

Let $\widetilde{\Delta}$ be a simplicial complex on $[N, \bar{N}]$. For each minimal nonface $F$ of $\widetilde{\Delta}$, let $m_F = (\prod_{i \in F} a_i) (\prod_{\bar{j} \in F} b_j)$, and let $\mathcal{I} = (m_F : F \text{ minimal nonface of }\widetilde{\Delta})$. Then the irreducible components of the vanishing locus of $\mathcal{I}$ in $\mathbb{A}^{[N, \bar{N}]}$ are in bijection with facets of $\widetilde{\Delta}$, with the locus $\bigcap_{\ell\in [N]\setminus T}V(a_\ell)\cap \bigcap_{\bar{p}\in [\bar{N}]\setminus T}V(b_p)$ corresponding to the facet $T$.

We obtain the vanishing locus of $\mathcal{I}$ in $(\mathbb{P}^1)^N$ by taking the vanishing locus in $\mathbb{A}^{[N, \bar{N}]}$, intersecting with the open set $U = \{a_i \not=0 \text{ or }b_i \not=0 \text{ for each }i\}$, and then taking the image under the map $U \to (\mathbb{P}^1)^N$ given by quotienting by the natural $\mathbb{G}_m^N$ action on $U$. 

We see that the components of $V(\mathcal{I})$ in $(\mathbb{P}^1)^N$ are in bijection with facets $T$ of $\widetilde{\Delta}$ such that $\{i, \bar{i}\} \cap T \not= \emptyset$ for all $i$; with such $T$ corresponding to the locus $V_T$ in $(\mathbb{P}^1)^N$ where the $i$th coordinate is allowed to be arbitrary if $\{i, \bar{i}\} \subset T$, the $i$th coordinate is fixed to be $\infty$ if $i \in T$ but $\bar{i} \not \in T$, and the $i$th coordinate is fixed to be $0$ if $\bar{i} \in T$ but $i \not \in T$. 

The facets $T$ of $\widetilde{\Delta}$ which have $T \cap \{i, \bar{i}\} = \emptyset$ for some $i$ do not affect $V(\mathcal{I})$: they correspond to associated primes of $\mathcal{I}$ which contain the irrelevant ideal.

\begin{proof}[Proof of Theorem~\ref{thm:grobnercriterion}]
Let $<$ be a term order on $\mathbb{C}[a_1, \dotsc, a_N]$. We can extend $<$ to a term order on $\mathbb{C}[a_1, \dotsc, a_N, b_1, \dotsc, b_N]$ such that $a^Sb^T < a^{S'}b^{T'}$ if $a^S < a^{S'}$; the initial term of a multihomogeneous polynomial in $\mathbb{C}[a_1, \dotsc, a_N, b_1, \dotsc, b_N]$ does not depend on the choice of extension. We have an inclusion
$$V(\operatorname{in}_{<}(\mathcal{I}_X^h)) \subset V(\operatorname{in}_{<}(f_1^h), \dotsc, \operatorname{in}_{<}(f_r^h))\subset (\mathbb{P}^1)^N.$$
Let $\Delta$ be the simplicial complex on $[N]$ whose faces are given by subsets $S$ of $[N]$ such that $\pi_S(\overline{X}) = (\mathbb{P}^1)^S$. Every minimal nonface of $\Delta$ is the spread of some $f_i$. Indeed, if $F$ is a minimal nonface of $\Delta$, by assumption, there exists some $f_i$ such that $\operatorname{spr}(f_i)\subset F$. By Lemma~\ref{lem:spreadprojection} together with Lemma~\ref{lem:dehomogenize}, $\operatorname{spr}(f_i)$ is a nonface, so $\operatorname{spr}(f_i)=F$. By removing some elements from $\{f_1, \dotsc, f_r\}$, we can assume that each minimal nonface of $\Delta$ is the spread of a unique $f_i$. 

Because each $f_i$ is squarefree-supported, $\operatorname{in}_{<}(f_i^h)$ is a squarefree monomial where, for each $j$, at most one of $\{a_j, b_j\}$ appears. Additionally, we have $\operatorname{spr}(\operatorname{in}_{<}(f_i^h)) = \operatorname{spr}(f_i)$. We identify this squarefree monomial with a subset of $[N, \bar{N}]$, where the $b$-variables correspond to $[\bar{N}]$. Then this is a lift of the nonface $\operatorname{spr}(f_i)$ of $\Delta$. 
Let $\widetilde{\Delta}$ be the complex on $[N, \bar{N}]$ with these as nonfaces.
Note that $\widetilde{\Delta}$ arises from a choice of lifts of the minimal nonfaces of $\Delta$ (in the sense discussed above Lemma~\ref{lem:tildelift}).

As described above, each facet $T$ which intersects $\{i, \bar{i}\}$ for each $i$ gives rise to a locus $V_T$ in $(\mathbb{P}^1)^N$. 
We have
$$V(\operatorname{in}_{<}(f_1^h), \dotsc, \operatorname{in}_{<}(f_r^h)) = \bigcup_{T \text{ facet of } \widetilde{\Delta}, \, T \cap \{i, \bar{i}\} \not= \emptyset \text{ for all }i} V_T.$$
As $\{\operatorname{spr}(f) : 0 \not= f \in I_X\} = \{\operatorname{spr}(\operatorname{in}_{<}(f^h)) : 0\not= f \in I_X \}$, Lemma~\ref{lem:spreadprojection} implies that $\pi_S(V(\operatorname{in}_{<}(\mathcal{I}_X^h))) = (\mathbb{P}^1)^S$ for each $S \in \Delta$. This implies that, for each facet $F$ of $\Delta$, there is a face $\widetilde{F}$ of $\widetilde{\Delta}$ such that $F = \{i : \{i, \bar{i}\} \subset \widetilde{F}\}$. Then Lemma~\ref{lem:tildelift} gives that this assignment is a bijection between facets of $\widetilde{\Delta}$ and facets of $\Delta$. In particular, there are no associated primes of $(\operatorname{in}_{<}(f_1^h), \dotsc, \operatorname{in}_{<}(f_r^h))$ which are contained in the irrelevant ideal. 
Because $V(\operatorname{in}_{<}(f_1^h), \dotsc, \operatorname{in}_{<}(f_r^h))$ is reduced, we deduce that $V(\operatorname{in}_{<}(\mathcal{I}_X^h)) = V(\operatorname{in}_{<}(f_1^h), \dotsc, \operatorname{in}_{<}(f_r^h))$. Indeed,  for every proper closed subscheme $Y$ of $V(\operatorname{in}_{<}(f_1^h), \dotsc, \operatorname{in}_{<}(f_r^h))$, there is some facet $S$ of $\Delta$ such that $\pi_S(Y) \not= (\mathbb{P}^1)^S$: for each facet $S$ of $\Delta$, $V_{\widetilde{S}}$ is the unique component of $V(\operatorname{in}_{<}(f_1^h), \dotsc, \operatorname{in}_{<}(f_r^h))$ which dominates $(\mathbb{P}^1)^S$. Therefore $\{f_1^h, \dotsc, f_r^h\}$ is a Gr\"{o}bner basis for $\mathcal{I}_X^h$. 
Dehomogenizing gives that $\{f_1, \dotsc, f_r\}$ is a Gr\"{o}bner basis with respect to $<$ (see, e.g., \cite[Exercise 15.39]{Eis95}), and so it is a universal Gr\"{o}bner basis. 

Suppose that $X$ is pure dimensional, so $\mathfrak{F}_{\overline{X}}(z)$ is defined. The multidegree of $\overline{X}$ is the same as the multidegree of $\operatorname{in}_{<}(\mathcal{I}_X^h)$ \cite[Theorem 1.7.1]{KM05}. For $T \subset [N, \bar{N}]$ with $T \cap \{i, \bar{i}\} \not= \emptyset$ for each $i$, we have 
$$\mathfrak{F}_{V_T}(z) = \prod_{\{i, \bar{i}\} \not \subset T} z_i, \quad \text{and} \quad \mathfrak{F}_{\overline{X}}(z) = \sum_{T \text{ facet of }\widetilde{\Delta}} \mathfrak{F}_{V_T}(z).$$ In particular, because the facets of $\widetilde{\Delta}$ are in bijection with the facets of $\Delta$, the multidegrees of the $V_T$ appearing are distinct. Therefore the coefficients in $\mathfrak{F}_{\overline{X}}(z)$ are all either $0$ or $1$. 
\end{proof}

\begin{remark}
Under the hypotheses of Theorem~\ref{thm:grobnercriterion}, $\operatorname{in}_\omega(I_X)$ is generated by $\{\operatorname{in}_\omega(f_1), \dotsc, \operatorname{in}_\omega(f_r)\}$ for any weight $\omega$. Indeed, if we choose some $\omega'$ corresponding to a monomial order, then $\operatorname{in}_{\omega'}(\operatorname{in}_\omega(I_X)) = \operatorname{in}_{\omega + \epsilon \omega'}(I_X)$ for some $\epsilon$ sufficiently small, and $\operatorname{in}_{\omega + \epsilon \omega'}(I_X)$ is a monomial ideal \cite[Proposition 1.11, Proposition 1.13]{SturmfelsGrobner}. Because $\omega'$ corresponds to a monomial order, the fact that $\{\operatorname{in}_{\omega'}(\operatorname{in}_\omega(f_1)), \dotsc, \operatorname{in}_{\omega'}(\operatorname{in}_\omega(f_r))\}$ generates $\operatorname{in}_{\omega + \epsilon \omega'}(I_X)$ implies the claim. 

A similar argument works in the setting of valued Gr\"{o}bner bases in the sense of \cite{ChanMaclagan}: if $\operatorname{in}_\omega(I_X)$ is a monomial ideal, then the proof of Theorem~\ref{thm:grobnercriterion} shows that the $f_i$ are a Gr\"{o}bner basis. At least if $I_X$ is homogeneous, then $\{\operatorname{in}_\omega(f_1), \dotsc, \operatorname{in}_{\omega}(f_r)\}$ generates $\operatorname{in}_{\omega}(I_X)$ even if $\operatorname{in}_\omega(I_X)$ is not monomial, using \cite[Corollary 2.4.10]{MaclaganSturmfels} and \cite[Remark 2.11]{ChanMaclagan}.
\end{remark}

\begin{remark}\label{rem:lex}
Theorem~\ref{thm:grobnercriterion} and Proposition~\ref{prop:pure} imply that if squarefree-supported polynomials $f_1, \dotsc, f_r$ are a Gr\"{o}bner basis for an ideal with respect to every lexicographic term order, then they are a universal Gr\"{o}bner basis for that ideal. This does not hold for arbitrary $f_1, \dotsc, f_r$. For example, $a_1^2a_2^2 - a_1^3 - a_2^3, a_1a_2, a_1^4, a_2^4$ are a Gr\"{o}bner basis for the ideal they generate in $\mathbb{C}[a_1, a_2]$ with respect to either lexicographic order, but they are not a Gr\"{o}bner basis with respect to a graded lexicographic order. We thank Elizabeth Pratt for showing us this example. 
\end{remark}

Using ideas from the proof of Theorem~\ref{thm:grobnercriterion}, we can relate varieties $X \subseteq \mathbb{A}^N$ whose ideals have a squarefree-supported universal Gr\"{o}bner basis to {Cartwright--Sturmfels} ideals, a class of ideals introduced in \cite{CDGMaximal,CDGCS,CDG}. A multihomogeneous monomial ideal $\mathcal{I}$ in $\mathbb{C}[a_1, \dotsc, a_N, b_1, \dotsc, b_N]$ is \emph{Borel-fixed} if for every monomial $m \in \mathcal{I}$ and $i$ such that $m \cdot a_i/b_i$ is a polynomial, we have $m \cdot a_i/b_i \in \mathcal{I}$. A multihomogeneous ideal is \emph{Cartwright--Sturmfels} if it has the same $\mathbb{Z}^N$-graded Hilbert function as a Borel-fixed monomial ideal.

\begin{proposition}\label{prop:CSiff}
Let $X$ be a closed subscheme of $\mathbb{A}^N$. Then $I_X$ has a squarefree-supported universal Gr\"{o}bner basis if and only if $\mathcal{I}_X^h$ is a Cartwright--Sturmfels ideal. 
\end{proposition}

\begin{proof}
First suppose that $\mathcal{I}_X^h$ is Cartwright--Sturmfels. By \cite[Proposition 2.4(6)]{CDGsurvey}, $\mathcal{I}_X^h$ has a universal Gr\"{o}bner basis consisting of squarefree-supported polynomials. We can take them to be multihomogenizations of polynomials in $I_X$. Dehomogenizing these polynomials gives a universal Gr\"{o}bner basis for $I_X$. 

Now suppose that $I_X$ has a squarefree-supported universal Gr\"{o}bner basis $f_1, \dotsc, f_r$ consisting of nonzero polynomials. Then $f_1, \dotsc, f_r$ satisfy the hypothesis of Theorem~\ref{thm:grobnercriterion} by Proposition~\ref{prop:pure}. Choose a term order on $\mathbb{C}[a_1, \dotsc, a_N, b_1, \dotsc, b_N]$. By the proof of Theorem~\ref{thm:grobnercriterion}, the initial ideal is the Stanley--Reisner ideal of a full choice of lifts of $\Delta := \Delta(f_1, \dotsc, f_r)$. If $\widetilde{\Delta}$ arises from a full choice of lifts, then the elements $a_1 - b_1, \dotsc, a_N - b_N$ form a regular sequence on the Stanley--Reisner ring $\mathbb{C}[\widetilde{\Delta}]$, and the quotient is the Stanley--Reisner ring of $\Delta$. This implies that the multigraded Hilbert function of the Stanley--Reisner ring of $\widetilde{\Delta}$ is the same as the multigraded Hilbert function of any complex arising from a full choice of lifts.

Let $\mathcal{J}$ be the ideal obtained by extending the Stanley--Reisner ideal of $\Delta$ to $\mathbb{C}[a_1, \dotsc, a_N, b_1, \dots, b_N]$. This is the Stanley--Reisner ideal of a full choice of lifts, so the multigraded Hilbert function of $\mathcal{I}_X^h$ is the same as the multigraded Hilbert function of $\mathcal{J}$. As $\mathcal{J}$ is radical and Borel-fixed, this means that $\mathcal{I}_X^h$ is Cartwright--Sturmfels. 
\end{proof}

The proof of Proposition~\ref{prop:CSiff} also shows that if $I_X$ has a squarefree-supported universal Gr\"{o}bner basis, then $\mathcal{I}_X^h$ 
is also a \emph{Cartwright--Sturmfels*} ideal in the sense of \cite{CDGCS,CDG}, this gives it remarkable homological properties. For example, the graded Betti numbers of $\mathcal{I}_X^h$ are the same as those of any initial ideal of $\mathcal{I}_X^h$ \cite[Proposition 1.9(3)]{CDG}.

\medskip

We give a description of the fine multidegree of $\overline{X}$ in terms of Gr\"{o}bner degenerations.  This description is not needed in the sequel. 

\begin{proposition}\label{prop:finegrobner}
Let $X$ be a closed subscheme of $\mathbb{A}^N$ whose irreducible components all have codimension $d$. 
For each set $S \subset [N]$ of size $d$, the coefficient of $z^S$ in $\mathfrak{F}_{\overline{X}}(z)$ is the largest multiplicity with which the ideal $(a_i : i \in S)$ appears as an associated prime in a Gr\"{o}bner degeneration of $I_X$. 
\end{proposition}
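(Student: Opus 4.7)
The plan is to prove both inequalities separately. Fix $S \subseteq [N]$ with $|S| = d$, write $c_S := \llbracket z^S \rrbracket \mathfrak{F}_{\overline{X}}(z)$, and set $\mathfrak{p}_S := (a_i : i \in S)$. The upper bound (the multiplicity of $\mathfrak{p}_S$ in any Gr\"{o}bner degeneration is at most $c_S$) is essentially a bookkeeping argument in $(\mathbb{P}^1)^N$; for the lower bound, I will exhibit a specific degeneration, produced from a $1$-parameter subgroup supported on the $S$-coordinates.

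For the upper bound, fix any weight $\omega$ on $\mathbb{C}[a_1, \dotsc, a_N]$ and extend it to the $b$-variables by zero, so that $\overline{X}_0 := V(\operatorname{in}_{(\omega, 0)}(\mathcal{I}_X^h)) \subset (\mathbb{P}^1)^N$ is the corresponding Gr\"{o}bner degeneration of $\overline{X}$. Because $\operatorname{in}_{(\omega, 0)}(f^h)$ dehomogenizes to $\operatorname{in}_\omega(f)$ for each $f \in I_X$, the affine part of $\overline{X}_0$ coincides scheme-theoretically with $X_0 := V(\operatorname{in}_\omega(I_X)) \subset \mathbb{A}^N$, and the top-dimensional components of $\overline{X}_0$ meeting the affine chart are precisely the closures of the top-dimensional components of $X_0$. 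By preservation of multidegree under Gr\"{o}bner degeneration \cite[Theorem 1.7.1]{KM05}, $\mathfrak{F}_{\overline{X}_0}(z) = \mathfrak{F}_{\overline{X}}(z)$. Decomposing $\mathfrak{F}_{\overline{X}_0}(z)$ as a sum over top-dimensional components with non-negative multiplicities, the affine component corresponding to $\mathfrak{p}_S$ contributes exactly $m_S \cdot z^S$, where $m_S$ is the multiplicity of $\mathfrak{p}_S$ as an associated prime of $X_0$, while all remaining top-dimensional components contribute further non-negative monomials. Hence $m_S \leq c_S$.

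For the lower bound, take $\varphi \colon \mathbb{G}_m \to \mathbb{G}_m^N$ with weight $\omega_i = 1$ for $i \in S$ and $\omega_i = 0$ for $i \in S^c$, and let $X_0 = \lim_{t \to 0} \varphi(t) \cdot X$ be the flat limit. Since all the weights are non-negative and vanish exactly on $S^c$, every $\varphi$-orbit in $\mathbb{A}^N$ converges to the fixed locus $V(\mathfrak{p}_S) = \mathbb{A}^{S^c}$ as $t \to 0$, so set-theoretically $X_0 \subseteq V(\mathfrak{p}_S)$. Since both $X_0$ and $V(\mathfrak{p}_S)$ have codimension $d$ and $V(\mathfrak{p}_S)$ is irreducible, $X_0$ is scheme-theoretically a non-reduced thickening of $V(\mathfrak{p}_S)$, so $\mathfrak{p}_S$ is the unique minimal associated prime, and its multiplicity $m_S$ equals the length of the generic fiber of $X_0 \to \mathbb{A}^{S^c}$. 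By flatness of the family $\{\varphi(t) \cdot X\}_{t \in \mathbb{A}^1}$, this length equals the length of the generic fiber of $\pi \colon X \to \mathbb{A}^{S^c}$. Because $\pi$ is $\varphi$-equivariant (trivially on the target), its generic degree is $c_S$, by the pushforward-degree interpretation of the coefficient of $z^S$ in the fine multidegree discussed after Proposition~\ref{prop:spreadmultidegree} (and in Section~\ref{sec:intro}). Therefore $m_S = c_S$.

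The main obstacle is the assertion in the lower bound that $X_0$ is scheme-theoretically a thickening of $V(\mathfrak{p}_S)$ of multiplicity exactly $c_S$. The set-theoretic containment $X_0 \subseteq V(\mathfrak{p}_S)$ follows from the orbit-closure description of the flat limit under a $\mathbb{G}_m$-action with non-negative weights, and the dimension count forces equality set-theoretically. The multiplicity assertion combines flatness of the degeneration (to preserve generic fiber length across the family) with the identification of the multiplicity of a minimal associated prime with the length of the generic stalk, and with the pushforward-degree interpretation of $c_S$. Once these ingredients are in place, the upper and lower bounds match.
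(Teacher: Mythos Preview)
Your upper bound is essentially the paper's argument and is fine.

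The lower bound has a genuine gap. The claim that ``every $\varphi$-orbit in $\mathbb{A}^N$ converges to $V(\mathfrak{p}_S)$, so set-theoretically $X_0 \subseteq V(\mathfrak{p}_S)$'' is false: the flat limit of $X$ is \emph{not} the union of limits of $\varphi$-orbits of points of $X$. Take $X = V(a_1 a_2 - 1) \subset \mathbb{A}^2$, $S = \{1\}$, $\omega = (1,0)$. Then $\operatorname{in}_\omega(a_1 a_2 - 1) = a_1 a_2$, so $X_0 = V(a_1) \cup V(a_2)$, which is not contained in $V(a_1)$. (The extra component arises from points like $(t^{-1}, t) \in X$ drifting off as $t \to 0$.) Consequently $\mathfrak{p}_S$ is not the unique minimal prime of $X_0$, and the subsequent length computation, which rests on that, collapses. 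The further appeal to ``flatness'' to identify the generic fibre length of $X_0 \to \mathbb{A}^{S^c}$ with that of $X \to \mathbb{A}^{S^c}$ is also unjustified: the family is flat over $\mathbb{A}^1$, not over $\mathbb{A}^{S^c}$, and degree of a generically finite map is not constant in flat families without a properness hypothesis. Finally, $\omega = \mathbf{1}_S$ is not a term order, so even if the multiplicity claim held you would still need to pass to a monomial refinement.

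The paper's lower bound avoids all of this by choosing a lexicographic term order in which the variables $\{a_i : i \in S\}$ come first, taking a Gr\"{o}bner basis $\{f_k^h\}$ for $\mathcal{I}_X^h$, and arguing directly that no component of $V(\operatorname{in}_<(\mathcal{I}_X^h))$ with multidegree $z^S$ can lie at infinity: if $\operatorname{in}_<(f_k^h)$ involves some $b_j$ with $j \in S$, the lex order forces it to also involve some $a_i$ with $i \in S$, $i < j$, and this rules out every coordinate subspace $V_T$ with multidegree $z^S$ except the affine one $V_{S^c \cup \overline{S^c} \cup \overline{S}}$. Thus the full coefficient $c_S$ is accounted for by the multiplicity of $\mathfrak{p}_S$ in the affine degeneration $\operatorname{in}_<(I_X)$.
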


We first outline the geometry of Proposition~\ref{prop:finegrobner}. For a term order $<$, the coefficient of $z^S$ in $\mathfrak{F}_{\overline{X}}(z)$ records the number of components (counted with multiplicity) of $V(\operatorname{in}_{<} (\mathcal{I}_X^h))$ which have multidegree $S$. We have
$$V(\operatorname{in}_<(I_X)) = V(\operatorname{in}_{<}(\mathcal{I}_X^h)) \cap \mathbb{A}^N,$$
so this coefficient gives an upper bound on the multiplicity of $V(a_i : i \in S)$ in $V(\operatorname{in}_<(I_X))$. If we choose a lexicographic order that starts with $S$, then all components of $V(\operatorname{in}_{<}(\mathcal{I}_X^h))$ with multidegree $S$ intersect $\mathbb{A}^N$, so this bound is attained. 

\begin{proof}[Proof of Proposition~\ref{prop:finegrobner}]
Let $U = S^c$. 
Let $<$ be a term order on $\mathbb{C}[a_1, \dotsc, a_N]$. As in the proof of Theorem~\ref{thm:grobnercriterion}, we extend $<$ to $\mathbb{C}[a_1, \dotsc, a_N, b_1, \dotsc, b_N]$. Then the multidegree of $\operatorname{in}_{<}(\mathcal{I}_X^h)$ is equal to the multidegree of $\mathcal{I}_X^h$ \cite[Theorem 1.7.1]{KM05}. The multidegree of $\operatorname{in}_{<}(\mathcal{I}_X^h)$ is equal to the sum of the multiplicities of the components of $V(\operatorname{in}_{<}(\mathcal{I}_X^h))$ which are of the form $V_T$ for some $T$ with $U = \{i : \{i, \bar{i}\} \subset T\}$. The multiplicity with which $(a_i : i \in S)$ appears as an associated prime in $\operatorname{in}_{<}(I_X)$ is equal to the multiplicity with which $V_{U \cup \overline{U} \cup \overline{S}}$ appears. This shows that $\llbracket z^S\rrbracket \mathfrak{F}_{\overline{X}}(z)$ is an upper bound for the multiplicity of $(a_i : i \in S)$.

To show that this upper bound is attained, it suffices to construct a term order for which the multiplicity of $V_{U \cup \overline{U} \cup \widetilde{S}}$ is $0$ if $\widetilde{S}$ is a lift of $S$ which is not $\overline{S}$. We may assume that $S = \{1, \dotsc, d\}$, and let $<$ be the lexicographic term order induced by $1 > 2 > \dotsb > N$. 

Let $\{f_1^h, \dotsc, f_r^h\}$ be a Gr\"{o}bner basis for $<$ consisting of multihomogenizations of elements of $I_X$. For existence of such a Gr\"{o}bner basis, see, e.g., \cite[Exercise 15.39]{Eis95}. Let $\widetilde{S}$ be a lift of $S$ which is not $\overline{S}$, and let $j$ be the smallest element of $S \cap \widetilde{S}$. The choice of term order guarantees that if $\operatorname{in}_{<}(f_k^h)$ contains $b_j$, then it must contain $a_i$ for some $i < j$. In particular, as $i \not \in \widetilde{S}$, if $\operatorname{in}_{<}(f_k^h)$ vanishes on $V_{U \cup \overline{U} \cup \widetilde{S}}$, then it also vanishes on $V_{U \cup \overline{U} \cup \widetilde{S} \cup \{\bar{j}\}}$. This means $V_{U \cup \overline{U} \cup \widetilde{S}}$ is not an irreducible component of $V(\operatorname{in}_{<}(f_1^h), \dotsc, \operatorname{in}_{<}(f_r^h))$.
\end{proof}

In particular, Proposition~\ref{prop:finegrobner} shows that all coefficients of $\mathfrak{F}_{\overline{X}}(z)$ are either $0$ or $1$ if and only if all Gr\"{o}bner degenerations of $X$ are generically reduced. 

The proof of Proposition~\ref{prop:finegrobner} shows that, for any $S$, $\llbracket z^S\rrbracket\mathfrak{F}_{w}(z)$
is the multiplicity of $\mathbb{A}^{S^c}$ in $V(\operatorname{in}_<(I_w))$, where $<$ is a lexicographic term order in which $S$ occurs first. In some cases, this can be used to compute $\llbracket z^S\rrbracket \mathfrak{F}_{w}(z)$. For example, if there is an antidiagonal term order which is a lexicographic term order in which $S$ occurs first, then \cite{KM05} shows that $\llbracket z^S\rrbracket \mathfrak{F}_{w}(z)$ is $1$ if $S$ is the set of crosses in a reduced pipe dream for $w$, and is $0$ otherwise.

\begin{remark}\label{rem:CS}
If $X$ is integral and all coefficients of $\mathfrak{F}_{\overline{X}}(z)$ are $0$ or $1$, then it follows from \cite{BrionMultiplicity} that $\mathcal{I}_X^h$ is a \emph{Cartwright--Sturmfels} ideal in the sense of \cite{CDGCS,CDG}. By Proposition~\ref{prop:CSiff}, $I_X$ has a squarefree-supported universal Gr\"{o}bner basis in this case, so there will be $f_1, \dotsc, f_r \in I_X$ so that the hypothesis of Theorem~\ref{thm:irreducible} holds. In particular, in the case when $X$ is integral, the following are equivalent:
\begin{enumerate}
    \item $X$ has a squarefree-supported universal Gr\"{o}bner basis. 
    \item All Gr\"{o}bner degenerations of $X$ are reduced.
    \item All Gr\"{o}bner degenerations of $X$ are generically reduced. 
    \item The circuit polynomials of $X$ are squarefree-supported. 
    \item $\mathfrak{F}_{\overline{X}}(z)$ has all coefficients $0$ or $1$.
\end{enumerate}
\end{remark}

\begin{example}
Let $X$ be a closed subscheme of $\mathbb{A}^N$ with the property that $I_X$ has a squarefree-supported universal Gr\"{o}bner basis. Then for any $i \in [N]$ and $\lambda \in \mathbb{C}$, the ideal of $X \cap V(a_i - \lambda)$ has a universal Gr\"{o}bner basis obtained by setting $a_i$ equal to $\lambda$ in the squarefree-supported universal Gr\"{o}bner basis of $I_X$. Indeed, by the condition in Theorem~\ref{thm:grobnercriterion}, we may translate $X$ without changing whether $I_X$ has a squarefree-supported universal Gr\"{o}bner basis, so we may take $\lambda = 0$. Then this follows from the proof of \cite[Proposition 5.4]{BoocherDeterminant}.
\end{example}

\subsection{Gr\"{o}bner bases for the varieties of rank 1 matrices}\label{ssec:determinantal}

We now use Theorem~\ref{thm:irreducible} to give a short proof of the known universal Gr\"{o}bner basis for the ideal generated by the $2 \times 2$ minors of a generic matrix. 

Let $X \subset \mathbb{A}^{pq}$ be the variety of $p \times q$ matrices of rank at most $1$. It is well-known (and follows easily from the proof of Proposition~\ref{prop:rank1GB}) that $X$ is irreducible of dimension $p + q - 1$.
The ideal $I_X$ of $X$ is generated by $2 \times 2$ minors, but these are usually \emph{not} a universal Gr\"{o}bner basis. 
We will need to use the $\mathrm{merge}$ operation from the introduction in order to construct new relations. 

For this, it is helpful to identify $[p] \times [q]$ with the edges of the complete bipartite graph $K_{p,q}$. Then the spread of a $2 \times 2$ minor corresponds to a $4$-cycle in this graph. Suppose we have squarefree-supported elements $f_1, f_2 \in I_X$ corresponding to subgraphs $G_1$ and $G_2$.  Then they satisfy the hypothesis used to define $\mathcal{R}_w$ if and only if there is a unique edge $(\alpha, \beta)$ in both $G_1$ and $G_2$, and $\alpha$ and $\beta$ are the only vertices used by both $G_1$ and $G_2$. In this case, the spread of $\operatorname{merge}_{(\alpha, \beta)}(f_1, f_2)$ corresponds to the subgraph $G_1 \cup G_2 \setminus \{(\alpha, \beta)\}$.

In particular, every cycle in $K_{p,q}$ can be obtained in this way: for $k \ge 2$, each $(2k+2)$-cycle is obtained by merging a $2k$-cycle with a $4$-cycle. Explicitly, given a cycle with edges $(i_1, j_1), (i_2, j_1), \dotsc, (i_k, j_k), (i_1,j_k)$, we have the relation
\begin{equation}\label{eq:cycle}
\prod_{\ell=1}^{k} a_{i_\ell,j_{\ell}} - \prod_{\ell=1}^{k} a_{i_{\ell+1},j_{\ell}} \in I_X, \quad \text{ where }i_{k+1} = i_1.
\end{equation}

In particular, Proposition~\ref{prop:spreadmultidegree} shows that if $S \subset [p] \times [q]$ has $\llbracket z^S\rrbracket \mathfrak{F}_{\overline{X}}(z) > 0$, then $S^c$ corresponds to a forest in $K_{p,q}$. As $\dim X = p + q - 1$, we must have $|S^c| = p + q - 1$, so $S^c$ must be a spanning tree. 

\begin{proposition} \cite[Proposition 4.11 and 8.11]{SturmfelsGrobner} \cite[Proposition 10.1.11]{Villarreal} \label{prop:rank1GB}
Let $X \subset \mathbb{A}^{pq}$ be the variety of $p \times q$ matrices of rank at most $1$. Then the relations in \eqref{eq:cycle} are a universal Gr\"{o}bner basis for $I_X$. 
\end{proposition}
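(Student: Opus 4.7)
The plan is to apply Theorem~\ref{thm:irreducible} with $f_1, \dotsc, f_r$ the cycle relations of \eqref{eq:cycle}. These are nonzero squarefree elements of $I_X$, and the discussion preceding the proposition shows that their spreads realize precisely the edge sets of cycles in $K_{p,q}$. Consequently, the simplicial complex $\Delta(f_1, \dotsc, f_r)$ on $[p] \times [q]$ has as its faces exactly the forests in $K_{p,q}$; since $K_{p,q}$ is connected, its facets are the spanning trees, each of size $p + q - 1$. The variety $X$ is irreducible with $\dim X = p + q - 1$, so its codimension is $d = (p-1)(q-1)$, and $N - d - 1 = pq - (p-1)(q-1) - 1 = p + q - 2$, matching the purity hypothesis of Theorem~\ref{thm:irreducible}.

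What remains is to show that for each spanning tree $U$, the coefficient $\llbracket z^{U^c}\rrbracket \mathfrak{F}_{\overline{X}}(z)$ is positive. Since $|U^c| = d$, this coefficient equals the degree of the coordinate projection $\pi_U \colon X \to \mathbb{A}^U$. I would parameterize $X$ by the surjection $\varphi \colon \mathbb{C}^p \times \mathbb{C}^q \to X$, $(u, v) \mapsto (u_iv_j)_{(i,j)}$. For a generic target $(c_{ij})_{(i,j) \in U}$ with all $c_{ij} \ne 0$, normalize by $u_1 = 1$. Then, because $U$ is a spanning tree, starting from vertex $1$ we propagate outward along the edges of $U$; at each step the equation $u_iv_j = c_{ij}$ uniquely determines the as-yet-unknown endpoint from the already-known one. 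The acyclicity of $U$ ensures no inconsistency, so the preimage in $\mathbb{C}^p \times \mathbb{C}^q$ is a single $\mathbb{C}^*$-orbit and hence a single point of $X$. Thus $\pi_U$ is dominant of degree $1$, giving $\llbracket z^{U^c}\rrbracket \mathfrak{F}_{\overline{X}}(z) = 1 > 0$.

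With both hypotheses of Theorem~\ref{thm:irreducible} verified, the theorem immediately concludes that the cycle relations form a universal Gr\"{o}bner basis for $I_X$, and as a bonus every nonzero coefficient of $\mathfrak{F}_{\overline{X}}(z)$ equals $1$. The main thing to take care of is the degree calculation: one must check that the normalization $u_1 = 1$ legitimately kills the scaling ambiguity of $\varphi$ (so that counting $(u,v)$-solutions matches counting points of $X$), and that for generic $c_{ij}$ no propagated coordinate becomes $0$ so that the divisions used in the propagation are well-defined. Both are automatic at a generic point of $\mathbb{A}^U$, which is all that is required.
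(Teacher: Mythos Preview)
Your proof is correct and follows essentially the same strategy as the paper: apply Theorem~\ref{thm:irreducible} to the cycle relations, using the parametrization $\mathbb{A}^p \times \mathbb{A}^q \to X$, $(u,v) \mapsto (u_iv_j)$, to verify positivity of $\llbracket z^{U^c}\rrbracket \mathfrak{F}_{\overline{X}}(z)$ for each spanning tree $U$. The only difference is in that last verification: the paper checks dominance of $\mathbb{A}^p \times \mathbb{A}^q \to \mathbb{A}^U$ by computing its Jacobian, which is the edge--vertex incidence matrix of $U$ and hence has full rank exactly when $U$ is a spanning tree; you instead compute the generic fiber directly by propagating the equations $u_iv_j = c_{ij}$ along the tree after normalizing $u_1 = 1$. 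Your argument is slightly more elementary and has the pleasant side effect of showing the degree is $1$ outright (rather than deducing this from the conclusion of Theorem~\ref{thm:irreducible}), while the Jacobian computation makes the ``iff'' with spanning trees transparent.
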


\begin{proof}
By Theorem~\ref{thm:irreducible}, it suffices to show that if $S^c$ is a spanning tree of $K_{p,q}$, then $\llbracket z^S\rrbracket \mathfrak{F}_{\overline{X}}(z) > 0$. Let $U = S^c$.
Every $p \times q$ matrix of rank at most $1$ can be written as a product of a $p \times 1$ matrix and a $1 \times q$ matrix. This gives a surjective map
$$f \colon \mathbb{A}^{p} \times \mathbb{A}^q \to X \subset \mathbb{A}^{pq}, \quad a_{i,j} \mapsto u_i \otimes v_j,$$
where $(u_1, \dotsc, u_p)$ are the coordinates on $\mathbb{A}^p$ and $(v_1, \dotsc, v_q)$ are the coordinates on $\mathbb{A}^q$. 

By Lemma~\ref{lem:proj}, $\llbracket z^S\rrbracket \mathfrak{F}_{\overline{X}}(z) > 0$ if and only if the projection of $X$ onto $\mathbb{A}^{U}$ is dominant. This happens if and only if the composition  $\mathbb{A}^{p} \times \mathbb{A}^{q} \to \mathbb{A}^U$ is dominant. To prove that this is dominant when $U$ is a spanning tree, it suffices to show that the Jacobian matrix has full rank. The Jacobian of the composition $\mathbb{A}^{p} \times \mathbb{A}^q \to \mathbb{A}^U$ is a $(p + q) \times (p + q - 1)$ matrix whose rows are labeled by vertices of $K_{p,q}$ and whose columns are labeled by edges in $U$. 
In the column labeled by an edge $(i, j)$, as $\operatorname{d}(u_iv_j) = u_i \operatorname{d}v_j + v_j \operatorname{d}u_i$, we have an entry of $v_j$ in the row labeled by the $i$th vertex, an entry of $u_i$ in the row labeled by the $j$th vertex, and $0$ for all other entries. This is the incidence matrix of the subgraph corresponding to $U$, and so it has full rank if (and only if) $U$ is a spanning tree. 
\end{proof}

\begin{remark}
The key to the simple proof of Proposition~\ref{prop:rank1GB} is the explicit description of the subsets of $[p] \times [q]$ which avoid the spreads of the relations: they correspond to forests in a bipartite graph. We do not know a simple description for the sets which avoid the spreads of elements of the sets $\mathcal{R}_w$ in Theorem~\ref{thm:main}.
\end{remark}

\begin{remark}
The same argument can be used to give a short proof that the maximal minors are a universal Gr\"{o}bner basis for the ideal that they generate \cite{SturmfelsZelevinsky,BernsteinZelevinsky}. Indeed, the Jacobian matrix of the map obtained from the fact that a matrix of rank at most $p - 1$ can be written as a product of a $p \times (p-1)$ matrix and a $(p-1) \times q$ matrix is very simple, see \cite[Theorem 1.1(3), Corollary 3.2]{BDGGL}.
\end{remark}

\begin{remark}
Let $X \subset \mathbb{A}^{16}$ be the variety of $4 \times 4$ matrices of rank at most $2$. Then one can check that $[z_{1,1}z_{2,2}z_{3,3}z_{4,4}] \mathfrak{F}_{\overline{X}}(z_{i,j}) = 2$ (in any characteristic), so Theorem~\ref{thm:irreducible} cannot be applied to $X$. Using Proposition~\ref{prop:patterncontainment}, one can check that the fine multidegree polynomial of the locus of $p \times q$ matrices of rank at most $r$ has all coefficients equal to $0$ or $1$ if and only if $r \in \{0, 1, p-1, q-1, p, q\}$.
\end{remark}

\section{Geometric tools}\label{sec:geom}

In this section, we develop a number of geometric tools for studying fine Schubert polynomials. As mentioned in the introduction, we do not know of an inductive formula for fine Schubert polynomials, so none of these tools can be used to compute fine Schubert polynomials in general. However, in the proof of Theorem~\ref{thm:main} we show that, taken together, these tools are enough to compute the fine Schubert polynomial of certain permutations. 

We set up some notation that we will use for the rest of the paper. For $w\in S_n$ and $(i,j)\in [n]^2$, define $w\cdot (i,j):= (w(i),j)$ and $(i,j)\cdot w:= (i,w(j))$. For $U \subset [n]^2$, set $w \cdot U := \{w \cdot (i, j) : (i, j) \in U\}$, and define $U \cdot w$ similarly. 
Set $U_{A, B} = U \cap (A \times B)$. When $A$ or $B$ is $[n]$ we use $*$ as a shorthand, so $U_{i,*}:=U\cap (\{i\}\times [n])$.
For $v \in S_n$, let $v \cdot X_w$ be the variety in $\mathbb{A}^{n^2}$ obtained by applying the automorphism of $\mathbb{A}^{n^2}$ which applies the permutation $v$ to the rows, and let $X_w \cdot v$ be the variety obtained by permuting the columns via $v$. For $1\le i<j\le n$, let $t_{i,j}$ be the transposition $(i, j)$, and set $s_i = t_{i, i+1}$.

\medskip

We recall a consequence of Fulton's approach to intersection theory. Let $X$ be a smooth projective variety, and let $Y, Z$ be subvarieties whose irreducible components all have codimension $k$, $\ell$, respectively. Let $[Y], [Z] \in H^*(X)$ be the fundamental classes of $Y$ and $Z$ in the cohomology of $X$. Let $C_1, \dotsc, C_r$ be the irreducible components of $Y \cap Z$. By \cite[Section 6.1]{Fulton}, we have an equality
$$[Y] \smallsmile [Z] = \sum_{i=1}^{r} m_i \alpha_i,$$
where each $m_i$ is positive and $\alpha_i \in H^{2(k + \ell)}(X)$ is pushed forward from $C_i$. In particular, if $C_i$ has the expected dimension, i.e., $C_i$ has codimension $k + \ell$, then $\alpha_i = [C_i]$. In this case, $m_i$ is bounded above by the multiplicity of $C_i$ \cite[Proposition 7.1]{Fulton}, and so if $C_i$ is generically reduced, then $m_i = 1$. 
If $X$ has globally generated tangent bundle, e.g., if $X$ is a product of projective spaces, then each $\alpha_i$ is effective \cite[Theorem 12.2(a)]{Fulton}.

We will frequently use the above properties in the following way: for some subvariety $Z$ of $(\mathbb{P}^1)^{n^2}$, we will find an irreducible component $C$ of $Z \cap \overline{X}_w$ of the expected dimension and use the above to obtain an equality $[Z] \smallsmile [\overline{X}_w] = [C] + E$, where $E$ is the class of an effective cycle, so $E$ is a non-negative sum of classes of coordinate subspaces. This gives a lower bound on the coefficients of $\mathfrak{F}_{w}(z)$. We do not need to check that all other irreducible components of $Z \cap \overline{X}_w$ have the expected dimension. 

\medskip

We also recall one property of matrix Schubert varieties that will be used several times. For $w \in S_n$, let $p_w \in \mathbb{A}^{n^2}$ be the permutation matrix corresponding to $w$. Then $p_w \in X_w$.

\begin{proposition}\cite[Lemma 4.2]{KnutsonCotransition}\label{prop:tangent}
The tangent space to $p_w$ in $X_w$ is spanned by the matrix entries \emph{not} in the Rothe diagram $D(w)$. 
\end{proposition}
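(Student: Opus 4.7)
The plan is to realize an open neighborhood of $p_w$ in $X_w$ as a single orbit of a connected algebraic group, and to compute the tangent space to that orbit directly from the derivative of the orbit map.

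First, I would recall that, by the definition given in the introduction, $X_w$ is the closure in $M_{n\times n}$ of the $B_-\times B$-orbit $\mathcal{O} := B_- \cdot p_w \cdot B$, where $B$ is the upper-triangular Borel and $B_-$ its opposite. As an orbit of a connected group, $\mathcal{O}$ is irreducible and locally closed in $M_{n\times n}$. The stabilizer of $p_w$ in $B_- \times B$ projects isomorphically onto $B \cap p_w^{-1} B_- p_w$, namely the subgroup of upper-triangular $b$ with $b_{k,\ell} = 0$ whenever $k < \ell$ and $w(k) < w(\ell)$; counting free entries gives dimension $n + \ell(w)$. Hence $\dim \mathcal{O} = n(n+1) - (n + \ell(w)) = n^2 - \ell(w) = \dim X_w$, so $\mathcal{O}$ is dense in $X_w$, and being locally closed, it is an open neighborhood of $p_w$. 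In particular $X_w$ is smooth at $p_w$ and $T_{p_w} X_w = T_{p_w}\mathcal{O}$.

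Next I would compute $T_{p_w}\mathcal{O}$ as the image of the derivative of the orbit map at the identity, namely $(X,Y) \mapsto X p_w + p_w Y$ for $X \in \mathfrak{b}_-$ and $Y \in \mathfrak{b}$. Writing $p_w = \sum_k E_{k,w(k)}$ for the matrix units $E_{a,b}$, a short calculation gives $E_{a,b}\, p_w = E_{a, w(b)}$ and $p_w\, E_{a,b} = E_{w^{-1}(a), b}$, so
\[
\mathfrak{b}_- p_w = \operatorname{span}\bigl(E_{i,j} : w^{-1}(j) \le i\bigr), \qquad p_w \mathfrak{b} = \operatorname{span}\bigl(E_{i,j} : w(i) \le j\bigr).
\]
Their sum is spanned by those $E_{i,j}$ satisfying $w(i) \le j$ or $w^{-1}(j) \le i$, which is exactly the complement of $D(w) = \{(i,j) : w(i) > j \text{ and } w^{-1}(j) > i\}$, as required.

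The only step with any real content is the dimension count showing that $\mathcal{O}$ is open in $X_w$; once that is in hand, the tangent space calculation reduces to a direct manipulation of matrix units, and the argument goes through in any characteristic.
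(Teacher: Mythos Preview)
The paper does not give its own proof of this proposition; it simply quotes the result from \cite[Lemma~4.2]{KnutsonCotransition}. Your argument is correct and is essentially the standard one: exhibit the $B_-\times B$-orbit through $p_w$ as an open subset of $X_w$ via a dimension count, and then read off the tangent space as $\mathfrak{b}_- p_w + p_w\,\mathfrak{b}$ using the matrix-unit computation you carry out. One small slip: with your convention $p_w=\sum_k E_{k,w(k)}$, the condition for $p_w b p_w^{-1}\in B_-$ is $b_{k,\ell}=0$ whenever $k<\ell$ and $w^{-1}(k)<w^{-1}(\ell)$, not $w(k)<w(\ell)$; but since $\ell(w)=\ell(w^{-1})$ the dimension count, and hence the rest of the argument, is unaffected.
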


In particular, the dimension of the tangent space of $X_w$ at $p_w$ is equal to $n^2 - \ell(w)$, which is the dimension of $X_w$. This means that if $Y \subset \mathbb{A}^{n^2}$ is a closed subscheme which contains $X_w$ and has the property that the tangent space of $Y$ at $p_w$ has dimension $n^2 - \ell(w)$, then $X_w$ is an irreducible component of $Y$.

Finally, we state a general tool which will be used in this section and throughout Section~\ref{sec:proof}, which states that swapping certain rows or columns induces an automorphism of the matrix Schubert variety. 

\begin{proposition}\label{prop:swap}
Suppose that $w(i) < w(i+1)$. Then $s_i \cdot X_w = X_w$. 
\end{proposition}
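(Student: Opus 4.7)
The plan is to verify directly from Fulton's rank description that the row swap $M \mapsto s_i \cdot M$ preserves every inequality $\operatorname{rank}(M_{[p],[q]}) \le r_{p \times q}(w)$ defining $X_w$. For $p \ne i$ this is automatic: $(s_i \cdot M)_{[p],[q]}$ either equals $M_{[p],[q]}$ (when $p < i$) or has the same rows reordered (when $p > i$), so the rank is unchanged. The entire content lies in the case $p = i$, where $(s_i \cdot M)_{[i],[q]}$ consists of the rows $\{1, \dotsc, i-1, i+1\}$ of $M$ with columns $[q]$.

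For that case, I would use two elementary sandwich bounds: $(s_i \cdot M)_{[i],[q]}$ is a submatrix of $M_{[i+1],[q]}$, giving $\operatorname{rank}((s_i\cdot M)_{[i],[q]}) \le r_{(i+1)\times q}(w)$; and it is obtained from $M_{[i-1],[q]}$ by adjoining a single row, giving $\operatorname{rank}((s_i\cdot M)_{[i],[q]}) \le r_{(i-1)\times q}(w) + 1$. Since $r_{i\times q}(w) - r_{(i-1)\times q}(w)$ equals $1$ exactly when $w(i) \le q$, and $r_{(i+1)\times q}(w) - r_{i\times q}(w)$ equals $1$ exactly when $w(i+1) \le q$, the first bound collapses to $r_{i\times q}(w)$ whenever $w(i+1) > q$, and the second does so whenever $w(i) \le q$. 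These are the only places where information about $w$ is used.

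The hypothesis $w(i) < w(i+1)$ enters precisely to rule out the one remaining possibility $w(i+1) \le q < w(i)$, guaranteeing that at least one of the two bounds lands at $r_{i\times q}(w)$ for every $q$. Thus $s_i \cdot X_w \subseteq X_w$, and applying $s_i$ a second time (since $s_i^2 = 1$) gives equality. I anticipate no real obstacle; the only subtle point is recognizing that $w(i) < w(i+1)$ is exactly the condition making both rank bounds work uniformly across all $q$.
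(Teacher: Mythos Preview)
Your argument is correct. It differs from the paper's proof, which is a one-liner: since $w(i) < w(i+1)$, the essential set $\operatorname{ess}(w)$ contains no box in row $i$, so every Fulton generator is a minor of some $A_{p\times q}$ with $p \ne i$; swapping rows $i$ and $i+1$ either leaves such a submatrix unchanged ($p<i$) or permutes its rows ($p>i$), hence preserves the ideal. Your approach instead checks \emph{all} rank conditions directly and handles the $p=i$ case by the two sandwich bounds, exploiting that $w(i)<w(i+1)$ forces every $q$ to satisfy either $w(i+1)>q$ or $w(i)\le q$. This is more elementary in that it avoids invoking Fulton's reduction to the essential set, at the cost of being slightly longer; the paper's version is shorter but leans on that reduction.
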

\begin{proof}
The assumption that $w(i) < w(i+1)$ implies that the essential set $\operatorname{ess}(w)$ does not intersect the $i$th row. This means that multiplying on the left by $s_i$ preserves the Fulton generators, and so it preserves $X_w$. 
\end{proof}

Similarly, if $w^{-1}(i) < w^{-1}(i+1)$, then $X_w \cdot s_i = X_w$.

\subsection{Pattern avoidance}

We now prove Proposition~\ref{prop:patterncontainment}. Choose some $k \in [n]$, and let $v:=\del_k(w) \in S_{n-1}$ be the permutation given by the relative ordering of $w(1), \dotsc, w(k-1), w(k+1), \dotsc, w(n)$. 
View $\mathbb{A}^{(n-1)^2}$ as $\operatorname{Spec} \mathbb{C}[a_{i,j} : i \in [n] \setminus \{k\}, \, j \in [n] \setminus \{w(k)\}]$. 
We identify $\mathbb{A}^{(n-1)^2}$ with a subset of $\mathbb{A}^{n^2}$ via the map
$$\mathbb{C}[a_{i,j}] \to \mathbb{C}[a_{i,j} : i \in [n] \setminus \{k\}, \, j \in [n] \setminus \{w(k)\}] \quad   a_{k, w(k)} \mapsto 1, \, a_{k,j}, \, a_{j,w(k)} \mapsto 0, \text{ and } a_{i,j} \mapsto a_{i,j} \text{ otherwise.}$$
In particular, we identify $X_{v}$ with a subset of $\mathbb{A}^{n^2}$. Let $\iota_k:[n-1]\times [n-1]\to [n]\times [n]$ denote the corresponding injection on coordinates.
Namely,
\[\iota_k(i,j)=\begin{cases}(i,j) &\text{ if }i<k\text{ and } j<w(k)\\
(i,j+1) &\text{ if }i<k\text{ and } j\ge w(k)\\
(i+1,j) &\text{ if }i\ge k\text{ and } j<w(k)\\
(i+1,j+1) &\text{ if }i\ge k\text{ and } j\ge w(k)\\
\end{cases}.\]

\begin{lemma}\label{lemma:containment}
We have 
$$X_{w} \cap V(a_{k, w(k)} - 1) \cap V(a_{k,j} : j\not= w(k))  \cap V(a_{j, w(k)} : j \not= k) = X_{v}.$$
\end{lemma}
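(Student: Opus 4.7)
The plan is to prove the scheme-theoretic equality by showing that, under the identification $\mathbb{C}[a_{i,j}]/J \cong \mathbb{C}[a_{i,j}: i \neq k,\, j \neq w(k)]$ (where $J$ is the ideal cut out by the linear conditions in the statement), the image of $I_w$ coincides with $I_v$.

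The starting point is Fulton's theorem: $I_w$ is generated by the $(1+r_{p\times q}(w))$-minors of $A_{p\times q}$ for all $(p,q)\in [n-1]^2$ (including non-essential rank conditions produces the same ideal). Modulo $J$, the $k$th row of $A$ becomes the row with a $1$ in position $w(k)$ and zeros elsewhere, and the $w(k)$th column becomes the column with a $1$ in position $k$ and zeros elsewhere; the remaining entries form the generic $(n-1)\times(n-1)$ matrix $A'$ which, under the order-preserving bijections $[n]\setminus\{k\}\to [n-1]$ and $[n]\setminus\{w(k)\}\to [n-1]$, defines $X_v$. For each $(p,q)\in[n-1]^2$ I will reduce every $(1+r_{p\times q}(w))$-minor of $A_{p\times q}$ modulo $J$ and verify that the resulting ideal is generated by Fulton's generators for $X_v$ at a suitable $(p',q')$.

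The case analysis is according to whether row $k$ and/or column $w(k)$ appears in $A_{p\times q}$. If neither appears, then $A_{p\times q}=A'_{p\times q}$ and $r_{p\times q}(w)=r_{p\times q}(v)$, yielding the rank condition at $(p,q)$ directly. If exactly one appears, it becomes a zero row or column modulo $J$, so the surviving minors are minors of the appropriately smaller submatrix of $A'$, with $r_{p\times q}(w)$ equal to the corresponding rank count for $v$. If both appear, the pivot $(k,w(k))$ lies in $A_{p\times q}$: expanding an $r$-minor along the pivot row shows it vanishes modulo $J$ unless column $w(k)$ is also included, in which case the minor equals $\pm$ an $(r-1)$-minor of $A'_{(p-1)\times(q-1)}$. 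Thus the $(1+r_{p\times q}(w))$-minors reduce to the $r_{p\times q}(w)$-minors of $A'_{(p-1)\times(q-1)}$, and the identity $r_{p\times q}(w)=1+r_{(p-1)\times(q-1)}(v)$ (the pivot contributes the extra $+1$) exhibits these as Fulton's generators for $I_v$ at $(p-1,q-1)$.

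Since the map $(p,q)\mapsto (p-[k\le p],\, q-[w(k)\le q])$ is surjective onto $[n-2]^2$, every Fulton rank condition for $X_v$ is realized, giving the desired ideal equality in both directions. The only substantive step is the combinatorial identity $r_{p\times q}(w)=r_{p'\times q'}(v)+[k\le p][w(k)\le q]$; this follows from a direct bijection between $\{i\le p: w(i)\le q,\, i\neq k\}$ and $\{i'\le p': v(i')\le q'\}$ via the reindexings, together with the observation that both reindexings preserve all order relations used. There is no conceptual obstacle beyond this combinatorial bookkeeping.
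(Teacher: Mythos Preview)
Your proposal is correct and follows the same approach as the paper: both arguments show that under the substitution $a_{k,w(k)}=1$, $a_{k,j}=a_{j,w(k)}=0$, the Fulton generators of $I_w$ reduce to the Fulton generators of $I_v$, and that every Fulton generator of $I_v$ arises this way. The paper's proof is a one-sentence assertion of this fact, whereas you carry out the case analysis and the rank identity $r_{p\times q}(w)=r_{p'\times q'}(v)+[k\le p][w(k)\le q]$ explicitly; the only minor omission is that in the ``both appear'' case you do not mention the minors using neither row $k$ nor column $w(k)$, but these are $(2+r_{p'\times q'}(v))$-minors of $A'_{p'\times q'}$ and hence redundant.
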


\begin{proof}
If we make the above substitutions, then each Fulton generator of $I_w$ turns into a Fulton generator of $I_{v}$, and all Fulton generators of $I_v$ are obtained in this way. 
\end{proof}

Set $Z=V(a_{k,j} : (k, j) \not \in D(w), j\not= w(k)) \cap V(a_{j, w(k)} : (j, w(k)) \not \in D(w), j \not= k)\cap V(a_{k,w(k)}-1)$.

\begin{proof}[Proof of Proposition~\ref{prop:patterncontainment}]
By Lemma~\ref{lemma:containment}, $\overline{X}_v$ is contained in $\overline{X}_w \cap \overline{Z}$. It follows from Proposition~\ref{prop:tangent} that the tangent space at $p_w$ of $\overline{X}_w \cap \overline{Z}$ is equal to the tangent space of $\overline{X}_v$ at $p_w$. This implies that $\overline{X}_v$ is an irreducible component of $\overline{X}_w \cap \overline{Z}$. 

Recall that $M = \prod z_{i,j}$, where the product is over pairs $(i, j)$ where $(i, j) \in D(w)$ and $i = k$ or $j = w(k)$. 
Let $M' = \prod_{i = k \text{ or } j = w(k)} z_{i,j}$.  Note that $[\overline{Z}] = M'/M \in H^*((\mathbb{P}^1)^{n^2})$. As $\overline{X}_v$ is a component of $\overline{X}_w \cap \overline{Z}$, the discussion at the start of Section~\ref{sec:geom} gives that
$$[\overline{X}_w] \smallsmile [\overline{Z}] = [\overline{X}_v] + E,$$
where $E$ is an effective class. Because we are viewing $\overline{X}_v$ as a subspace of $(\mathbb{P}^1)^{n^2}$ via an embedding into a coordinate subspace, we have $[\overline{X}_v] = M' \cdot \mathfrak{F}_{v}(z_{i,j})_{i \in [n] \setminus \{k\}, \, j \in [n] \setminus \{w(k)\}}$. 
This gives that
$$M'/M \cdot \mathfrak{F}_{w}(z) = M' \cdot \mathfrak{F}_{v}(z_{i,j})_{i \in [n] \setminus \{k\}, j \in [n] \setminus \{w(k)\}} + E',$$
where $E'$ has non-negative coefficients. This implies the result. 
\end{proof}

\begin{remark}\label{rem:schubertpattern}
The same argument can be used to give a new proof of \cite[Theorem 1.2]{FMS}, which gives a similar monotonicity result for Schubert polynomials, by taking the closure of $X_w$ in $(\mathbb{P}^n)^n$ instead of $(\mathbb{P}^1)^{n^2}$; see Lemma~\ref{lem:classYw}. 
\end{remark}

\subsection{Cotransition}
In this section, we prove a version of Knutson's cotransition formula for Schubert polynomials \cite{KnutsonCotransition}. Let $w_0 = (n, n-1, \dotsc, 1)$ be the long element of $S_n$. We write $v \gtrdot w$ to mean that $v$ covers $w$ in Bruhat order. 

Given $w \in S_n$, we say that we can \emph{cotransition at} $k \in [n]$ if there is no $a < k$ such that $w  t_{a,k} \gtrdot w$. If this is the case, then let $L_k(w): = \{w  t_{k,b} : k < b, \, w  t_{k,b} \gtrdot w\}$. For example, if $w \not= w_0$ and $k$ is the minimal element such that $k + w(k) \le n$, then we can cotransition at $k$, and $L_k(w) = \{w' \in S_n : w' \gtrdot w, \, w'(k) \not= w(k)\}$. 

In \cite[Section 4]{KnutsonCotransition}, Knutson proves a special case of the following result, and his argument generalizes without modification. 

\begin{proposition}\label{prop:geometriccotransition}
Suppose we can cotransition at $k$. Then
$$X_w \cap V(a_{k, w(k)}) = \bigcup_{w' \in L_k(w)} X_{w'}.$$
\end{proposition}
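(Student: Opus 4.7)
The plan is to prove the two set-theoretic inclusions separately.

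For $\supseteq$: Fix $w'=wt_{k,b}\in L_k(w)$. The inclusion $X_{w'}\subseteq X_w$ is immediate from $w'\ge w$ in Bruhat order. To show $X_{w'}\subseteq V(a_{k,w(k)})$, I evaluate the $(k,w(k))$-entry at a generic point of the dense $B_-\times B$-orbit through $p_{w'}$ in $X_{w'}$: direct matrix multiplication gives
\[
(b_1p_{w'}b_2)_{k,w(k)}=\sum_{\substack{i\le k\\ w'(i)\le w(k)}}(b_1)_{k,i}\,(b_2)_{w'(i),w(k)}.
\]
Since $w'(k)=w(b)>w(k)$ and $w'(i)=w(i)$ for $i<k$, the index set reduces to $T:=\{i<k:w(i)<w(k)\}$. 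The cotransition hypothesis forces $T=\emptyset$: were $a=\max T$ to exist, maximality would give $w(i)\ge w(k)$ for all $a<i<k$, so $wt_{a,k}\gtrdot w$, contradicting the hypothesis. Hence the sum vanishes identically in $(b_1,b_2)$, so $a_{k,w(k)}$ is identically zero on a dense open of $X_{w'}$ and, by irreducibility, $a_{k,w(k)}\in I_{w'}$.

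For $\subseteq$: Since $a_{k,w(k)}(p_w)=1$, the function $a_{k,w(k)}$ is not a zero divisor on $X_w$, so $X_w\cap V(a_{k,w(k)})$ is a proper intersection of pure codimension $\ell(w)+1$ in $\mathbb{A}^{n^2}$. Applying the intersection-theoretic framework of the start of this section in the $T\times T$-equivariant Chow ring of $\mathbb{A}^{n^2}$, we obtain
\[
[X_w]\smallsmile [V(a_{k,w(k)})]=\sum_i m_i[C_i],\quad m_i\ge 1,
\]
where the $C_i$ are the irreducible components of the intersection. By \cite{KM05}, $[X_w]=\mathfrak{S}_w(x,y)$ (the double Schubert polynomial) and $[V(a_{k,w(k)})]=x_k-y_{w(k)}$. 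Knutson's cohomological cotransition identity---the cohomological content of the special case treated in \cite[Section~4]{KnutsonCotransition}---identifies the left-hand side as $\sum_{w'\in L_k(w)}\mathfrak{S}_{w'}(x,y)=\sum_{w'\in L_k(w)}[X_{w'}]$. Each $X_{w'}$ with $w'\in L_k(w)$ already appears among the $C_i$ by the $\supseteq$ step, and linear independence of the double Schubert polynomials of distinct permutations forces the $C_i$ to be exactly these $X_{w'}$, each with multiplicity one.

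The main obstacle is the $\subseteq$ direction, which rests on having the cohomological cotransition identity as input. Granted that identity, the positivity of intersection multiplicities in Fulton's framework together with the linear independence of matrix Schubert classes mechanically yields the set-theoretic decomposition; this is the same structural argument Knutson employs in his special case, and it applies verbatim under the cotransition hypothesis.
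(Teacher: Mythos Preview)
Your $\supseteq$ direction is correct and is exactly what one does in Knutson's argument: the key observation that the cotransition hypothesis is equivalent to $w(i)>w(k)$ for all $i<k$ is right, and the matrix computation on the dense orbit is clean.

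Your $\subseteq$ direction, however, differs from Knutson's argument (which is what the paper is invoking) and has a genuine gap. The step ``linear independence of the double Schubert polynomials of distinct permutations forces the $C_i$ to be exactly these $X_{w'}$'' does not go through as stated: the extra components $C_i$ are $T\times T$-invariant but are not \emph{a priori} matrix Schubert varieties, so their classes $[C_i]$ need not be Schubert polynomials, and linear independence of the $\mathfrak{S}_{w'}$ says nothing about them. You can repair this by specializing $y\to 0$ and using that every equivariant multidegree of a nonempty $T$-invariant subvariety of $\mathbb{A}^{n^2}$ is a nonzero polynomial in the $x_i$ with nonnegative coefficients; then the equation $\sum_i m_i[C_i]=\sum_{w'}\mathfrak{S}_{w'}$ becomes a coefficientwise inequality that forces $m_{w'}=1$ and the extra terms to vanish. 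There is also a circularity concern: in \cite{KnutsonCotransition} the cohomological identity is \emph{deduced from} the geometric decomposition, so you cannot cite it from there to prove the decomposition. You would need an independent source for $(x_k-y_{w(k)})\mathfrak{S}_w=\sum_{w'\in L_k(w)}\mathfrak{S}_{w'}$, e.g.\ the equivariant Monk formula.

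For comparison, Knutson's argument---which the paper says ``generalizes without modification''---avoids the cohomological identity entirely. One observes that $a_{k,w(k)}\ne 0$ on the open cell $B_-p_wB\subset X_w$ (by the same matrix computation you did, with $w$ in place of $w'$), so $X_w\cap V(a_{k,w(k)})\subseteq X_w\setminus B_-p_wB=\bigcup_{w''\gtrdot w}X_{w''}$. Since the intersection is pure of codimension one in $X_w$, each component equals some $X_{w''}$ with $w''\gtrdot w$. Finally, $X_{w''}\subseteq V(a_{k,w(k)})$ forces $w''(k)\ne w(k)$ (else $p_{w''}$ witnesses $a_{k,w(k)}\ne 0$), and among covers this means $w''=wt_{k,b}$ for some $b>k$, i.e.\ $w''\in L_k(w)$. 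This is shorter and self-contained; your route, once patched, has the virtue of showing how the Schubert-polynomial identity and the geometric decomposition are equivalent via positivity.
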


We use this to deduce that fine Schubert polynomials behave well with respect to cotransition. 

\begin{proposition}\label{prop:cotransition}
Suppose we can cotransition at $k$. Then for any $S$ not containing $(k, w(k))$, we have
$$\llbracket z^S\rrbracket \mathfrak{F}_{w}(z) \ge \sum_{w' \in L_k(w)} \llbracket z^{S \cup \{(k, w(k))\}}\rrbracket \mathfrak{F}_{w'}(z).$$
\end{proposition}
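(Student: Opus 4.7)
My plan is to compute the cup product $[\overline{X}_w] \smallsmile [H_{k,w(k)}]$ in $H^*((\mathbb{P}^1)^{n^2})$ using the intersection-theoretic framework recalled at the start of Section~\ref{sec:geom}, where $H_{k,w(k)}$ denotes the closure of $V(a_{k,w(k)}) \subset \mathbb{A}^{n^2}$. By Proposition~\ref{prop:geometriccotransition}, we have $X_w \cap V(a_{k,w(k)}) = \bigcup_{w' \in L_k(w)} X_{w'}$, and each $X_{w'}$ has codimension $\ell(w')=\ell(w)+1$ in $\mathbb{A}^{n^2}$. This is the expected codimension of the intersection, so each $\overline{X}_{w'}$ is an irreducible component of $\overline{X}_w \cap H_{k,w(k)}$ of the expected dimension, appearing with multiplicity $1$ since the scheme-theoretic intersection is generically reduced along $X_{w'}$.

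Using the decomposition at the start of Section~\ref{sec:geom} and the fact that $(\mathbb{P}^1)^{n^2}$ has globally generated tangent bundle (so all remaining contributions are effective), I would deduce the equality
\[
[\overline{X}_w] \smallsmile [H_{k,w(k)}] \;=\; \sum_{w' \in L_k(w)} [\overline{X}_{w'}] \;+\; E
\]
for some effective class $E \in H^{2(\ell(w)+1)}((\mathbb{P}^1)^{n^2})$. Translating this into an identity of fine multidegree polynomials: since $[H_{k,w(k)}]^2 = 0$, multiplication by $z_{k,w(k)}$ kills terms of $\mathfrak{F}_w(z)$ containing $z_{k,w(k)}$ and sends each remaining monomial $z^T$ to $z^{T \cup \{(k,w(k))\}}$. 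Hence, for any $S$ not containing $(k,w(k))$, the coefficient of $z^{S \cup \{(k,w(k))\}}$ on the left is exactly $\llbracket z^S \rrbracket \mathfrak{F}_w(z)$, while on the right it equals $\sum_{w' \in L_k(w)} \llbracket z^{S \cup \{(k,w(k))\}}\rrbracket \mathfrak{F}_{w'}(z)$ plus a nonnegative contribution coming from $E$. Comparing these gives the desired inequality.

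The main potential obstacle is justifying that each $\overline{X}_{w'}$ contributes with multiplicity exactly $1$, rather than merely being bounded above by the scheme-theoretic multiplicity of $X_{w'}$ in $X_w \cap V(a_{k,w(k)})$. This requires that the intersection be generically reduced along each $X_{w'}$, which is the scheme-theoretic content of Proposition~\ref{prop:geometriccotransition} (and of Knutson's cotransition formula). A secondary subtlety is that $\overline{X}_w \cap H_{k,w(k)}$ may contain extra components supported at infinity or of excess dimension, but these only contribute to the effective class $E$ and therefore preserve the direction of the inequality.
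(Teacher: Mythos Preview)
Your proof is correct and follows essentially the same approach as the paper's: use Proposition~\ref{prop:geometriccotransition} to identify each $\overline{X}_{w'}$ as an irreducible component of the expected dimension in $\overline{X}_w \cap H_{k,w(k)}$, then invoke the discussion at the start of Section~\ref{sec:geom}. Your worry about showing multiplicity exactly $1$ is unnecessary for the stated inequality: Fulton's formula already guarantees each $m_{w'}$ is positive, so $m_{w'}\ge 1$ suffices, and the paper does not invoke generic reducedness here.
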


\begin{proof}
In the decomposition
$$X_w \cap V(a_{k, w(k)}) = \bigcup_{w' \in L_k(w)} X_{w'},$$
each $X_{w'}$ is codimension $1$ in $X_w$ and is an irreducible component of the intersection. Therefore, for each $w' \in L_k(w)$, $\overline{X}_{w'}$ is an irreducible component of the expected dimension in $\overline{X}_w \cap V(a_{k, w(k)})$. The discussion at the start of Section~\ref{sec:geom} implies the desired bound. 
\end{proof}

\subsection{Deleting a special column}

In this section, we will develop tools that allow us to relate $\mathfrak{F}_{w}(z)$ to the fine Schubert polynomial of a permutation $w'$ that is obtained by ``deleting a column'' from the Rothe diagram of $w$. 
Define the \emph{dominant part} of a 
permutation $w\in S_n$ to be 
\[\dom(w):=\{(i,j)\in D(w): (k,w(k))\not\in [i]\times [j] \text{ for all }k\in[n]\}.\]
Note that $\dom(w)=\emptyset$ if $w(1)=1$.
In other words, $\dom(w)$ is the  region of $D(w)$ connected to $(1,1)$ when nonempty. 

We say $c$ is a \textbf{special column} if the following
conditions are satisfied:
\begin{enumerate}
    \item[(a)]  $D(w)_{*,c}\setminus\dom(w)\neq \emptyset$;
    \item[(b)]  If $r$ is maximal such that $(r,c)\in D(w)$,
    then $D(w)_{[r+1,n],[c,n]}=\emptyset$. Namely,
    there are no squares weakly southeast of
    $(r+1,c)$ in the Rothe diagram.
\end{enumerate}
We will construct a permutation $\del_{*,c}(w)$ such that 
\begin{equation}
\label{eq:shift-left-one}
D(\del_{*,c}(w))=\{(i,j)\in D(w):j< c\}\cup \{(i,j-1):j>c, (i,j)\in D(w)\}.    
\end{equation}
In other words, the diagram for $\del_{*,c}(w)$ is obtained from $D(w)$ by deleting the boxes in column $c$ and shifting every box to the right of column $c$ by one column to the left. See Figure~\ref{fig:deletecols} for an example.

When $(r,c+1)\not\in D(w)$, namely $(r,c)$ is a 
southeast corner, define
\begin{equation}
    \del_{*,c}(w)(i):=\begin{cases}
        w(i) & \text{ if }w(i)<c \\
        w(i)-1 &\text{ if }i\le r,\ w(i)>c\\
        \min([n]\setminus ([c]\cup \del_{*,c}(w)([i-1]))) &\text{ if }i>r,\ w(i)\ge c
    \end{cases}
\end{equation}
Otherwise, recursively define $\del_{*,c}(w):=\del_{*,c+1}(w)$.

\begin{lemma}
The function $\del_{*,c}(w)$ is a permutation that satisfies \eqref{eq:shift-left-one}.
\end{lemma}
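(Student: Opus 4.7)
The plan is to proceed by induction on $n - c$. In the recursive case $(r, c+1) \in D(w)$, I first verify that column $c+1$ is also special. Condition (2) is immediate since $D(w)_{[r+1, n], [c+1, n]} \subseteq D(w)_{[r+1, n], [c, n]} = \emptyset$, and this also forces the maximum row in column $c+1$ to still be $r$. Condition (1) holds because $(r, c+1) \in D(w) \setminus \dom(w)$: picking any $(i_0, c) \in D(w) \setminus \dom(w)$ witnessing condition (1) for $c$, we have $i_0 \le r$ and $w([i_0]) \cap [c] \ne \emptyset$, so $w([r]) \cap [c+1] \ne \emptyset$, meaning $(r, c+1) \notin \dom(w)$. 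By induction $\del_{*, c+1}(w)$ satisfies \eqref{eq:shift-left-one} for column $c+1$, and to conclude it also satisfies \eqref{eq:shift-left-one} for column $c$ it suffices to show $\{i : (i, c) \in D(w)\} = \{i : (i, c+1) \in D(w)\}$ in the recursive case: one inclusion uses $w^{-1}(c+1) > r$ (from $(r, c+1) \in D(w)$) to rule out both $w(i) = c+1$ and $w^{-1}(c+1) \le i$ for $i \le r$; the other uses condition (2) together with $w^{-1}(c) > r$.

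For the base case $(r, c+1) \notin D(w)$, I first check that $\del_{*, c}(w)$ is a well-defined permutation of $[n]$. The key observation is that $(r, c) \in D(w)$ combined with $(r, c+1) \notin D(w)$ forces $w^{-1}(c+1) \le r$, so $c+1 \in w([r])$. A direct counting argument then shows the formula branches produce disjoint values whose union is $[n]$: the ``$w(i) < c$'' branch hits exactly $[c-1]$ since $w$ is a bijection; the ``$i \le r, w(i) > c$'' branch produces $\{w(j) - 1 : j \le r, w(j) > c\} \subseteq \{c, \ldots, n-1\}$, which contains $c$ because $c+1 \in w([r])$; and the min-formula branch injectively assigns the remaining values in $\{c+1, \ldots, n\}$ in ascending order to the Group 3 rows $\{i > r : w(i) \ge c\}$ in ascending order of row index, with cardinalities matching automatically.

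Finally, I verify the diagram identity \eqref{eq:shift-left-one} by a case analysis on $j$. For $j < c$, the formula yields $\del_{*,c}(w)(i) > j$ iff $w(i) > j$ (since all non-``$w(i) < c$'' branches produce values $\ge c > j$) and $\del_{*,c}(w)^{-1}(j) = w^{-1}(j)$ (since values below $c$ only arise from the ``$w(i) < c$'' branch), so $(i, j) \in D(\del_{*,c}(w))$ iff $(i, j) \in D(w)$. For $j \ge c$ the ``$w(i) < c$'' branch contributes nothing on either side; for the ``$i \le r, w(i) > c$'' branch one gets $\del_{*,c}(w)(i) > j$ iff $w(i) > j+1$, and $\del_{*,c}(w)^{-1}(j) > i$ iff $w^{-1}(j+1) > i$ by casing on whether $w^{-1}(j+1) \le r$ (direct translation via Group 2) or $w^{-1}(j+1) > r$ (in which case $\del_{*,c}(w)^{-1}(j)$ is a Group 3 row, hence automatically $> r \ge i$); for the min-formula branch, both sides are empty—the LHS because the min formula assigns ascending values to ascending rows, so $\del_{*,c}(w)(i) > j = \del_{*,c}(w)(\del_{*,c}(w)^{-1}(j))$ with $\del_{*,c}(w)^{-1}(j) > i$ (both Group 3) contradicts the ordering, and the RHS by condition (2) which forbids boxes of $D(w)$ in $[r+1, n] \times [c, n]$. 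The main obstacle is this last subcase, where the order-preservation of the min formula is essential to rule out spurious boxes.
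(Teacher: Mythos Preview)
Your proof is correct and follows the same overall approach as the paper: reduce to the base case $(r,c+1)\notin D(w)$ and then verify directly that the explicit formula defines a permutation with the required Rothe diagram. The paper's version is a terse sketch (``it suffices to assume $(r,c+1)\notin D(w)$'' and then one sentence per branch of the formula), whereas you spell out the recursive step---including the check that $c+1$ is again special and that columns $c$ and $c+1$ of $D(w)$ coincide---and make explicit the order-preservation of the min formula on the Group~3 rows that the paper hides in the phrase ``no blanks weakly southeast of $(r+1,c)$.''
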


\begin{proof}
    It suffices to assume that $(r,c+1)\not\in D(w)$.
    Let $w':=\del_{*,c}(w)$. To see that $w'$ is a permutation, we observe from the definition that $w'(i)\le n$ and  
    $w'(i)\not\in w'([i-1])$ for all $i\in [n]$.

    The first case of the construction ensures that $D(w)$ and $D(w')$ agree in columns $<c$. The second case ensures that in rows $\le r$, the cells in $D(w')$ are the cells in $D(w)$ shifted by one column to the left.
    The third case ensures that there are no boxes 
    weakly southeast of $(r+1,c)$.
\end{proof}

We now analyze the geometry of deleting a special column. For $S \subset [n]^2$, we view $\mathbb{A}^S$ as the coordinate subspace of $\mathbb{A}^{n^2}$ consisting of matrices whose entries are $0$ except possibly for the entries in $S$.
For a permutation $w$, let 
$$\Phi(w) = \{(i, j) : \text{there is }(k, \ell) \in D(w) \text{ with }k \ge i, \, \ell \ge j \},$$
i.e., the subset of $[n] \times [n]$ which is northwest of some square in the Rothe diagram. We call this the \textbf{interesting part}, for the following reason. The spread of each Fulton generator of $I_w$ is contained in $\Phi(w)$, and so $X_w \simeq (X_w \cap \mathbb{A}^{\Phi(w)}) \times \mathbb{A}^{[n]^2 \setminus \Phi(w)}.$ We call $X_w \cap \mathbb{A}^{\Phi(w)}$ the interesting part of $X_w$. The support of every monomial with nonzero coefficient in $\mathfrak{F}_{w}(z)$ is contained in $\Phi(w)$. Note that the dimension of $X_w \cap \mathbb{A}^{\Phi(w)}$ is $|\Phi(w)| - \ell(w)$. 

Let $c$ be a special column of $w$, and let $w' = \del_{*,c}(w)$. Then there is an injection $\iota_{*,c} \colon \Phi(w') \to \Phi(w)$ given by 
$$\iota_{*,c}((i, j)) = \begin{cases} (i, j) & \text{ if }j < c \\ (i, j+1) & \text{ if }j \ge c.\end{cases}$$ 
Let $E = \Phi(w) \setminus (\iota_{*,c}(\Phi(w')) \cup \Phi(w)_{*,c})$; note that $E$ is empty if there is a box of $D(w)$ strictly to the east of the $c$th column and a box of $D(w)$ weakly south of $(r, c)$. 

\begin{lemma}\label{lem:containmentcol}
The map $\mathbb{A}^{\Phi(w') \cup E} \to \mathbb{A}^{\Phi(w)}$ induced by $\iota_{*,c}$ restricts to an isomorphism from $X_{w'} \cap \mathbb{A}^{\Phi(w') \cup E}$ to $X_w \cap \mathbb{A}^{\Phi(w)} \cap V(a_{i, c} : (i, c) \in \Phi(w))$.
\end{lemma}

\begin{proof}
The rank of every square $(i, j)$ in $D(w')$ is the same as the rank of $\iota_{*,c}((i,j))$ in $D(w)$. Setting $a_{i, c} = 0$ for $(i, c) \in \Phi(w)$ kills the Fulton generators whose spreads intersect the $c$th column. Under the identification induced by $\iota_{*,c}$, the remaining Fulton generators for $I_w$ are the same as the Fulton generators for $I_{w'}$. 
\end{proof}

We will now show that, in some situations, we can use Lemma~\ref{lem:containmentcol} to bound the fine Schubert polynomial of $w$ in terms of the fine Schubert polynomial of $w'$. See Example~\ref{ex:cannotdelete} for an example of the subtleties in this type of statement. We use the notation above.

\begin{proposition}\label{prop:deletearbitrary}
Suppose $S \subset \Phi(w)$ satisfies $S_{*,c}=D(w)_{*,c}$ and $|S| = \ell(w)$. Then
$$\llbracket z^S\rrbracket \mathfrak{F}_{w}(z) \ge \llbracket z^{\iota_{*,c}^{-1}(S)}\rrbracket\mathfrak{F}_{w'}(z).$$
\end{proposition}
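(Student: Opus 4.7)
The plan is to mimic the proof of Proposition~\ref{prop:patterncontainment}: realize a copy of $\overline{X}_{w'}$ as an irreducible component of expected codimension in an intersection $\overline{X}_w \cap V$ for a suitable coordinate subspace $V$, then apply the effectivity principle of Section~\ref{sec:geom} to extract a lower bound on fine multidegrees. Set $I = \Phi(w)_{*,c}$ and $J = I \setminus D(w)_{*,c}$. We work inside $(\mathbb{P}^1)^{\Phi(w)}$, where $\mathfrak{F}_w$ is the multidegree polynomial of $\overline{Y}$ for $Y = X_w \cap \mathbb{A}^{\Phi(w)}$. Let $Y_e := Y \cap V(a_{(i,c)} : (i,c) \in I)$. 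By Lemma~\ref{lem:containmentcol}, the $Y$-equations restricted to $V(a_{(i,c)} : (i,c) \in I)$ coincide (under $\iota_{*,c}$) with the $Y'$-equations; thus $Y_e$ is irreducible, isomorphic via $\iota_{*,c}$ to $Y' \times \mathbb{A}^{K'}$ where $K' = \Phi(w) \setminus \iota_{*,c}(\Phi(w')) \setminus I$, and a direct count using $\ell(w) - \ell(w') = |D(w)_{*,c}|$ gives $\dim Y_e = \dim Y - |J|$.

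The key geometric step is to verify that $\overline{Y_e}$ is an irreducible component of $\overline{Y} \cap V(J)$ of the expected codimension $|J|$ in $\overline{Y}$, where $V(J) = \bigcap_{(i,c) \in J} \{a_{(i,c)} = 0\}$ in $(\mathbb{P}^1)^{\Phi(w)}$. Let $\tilde p_w$ denote the restriction of the permutation matrix $p_w$ to the coordinates of $\Phi(w)$; then $\tilde p_w \in Y$, since every Fulton generator of $I_w$ is supported on $\Phi(w)$. The only potentially-nonzero entry of $\tilde p_w$ in column $c$ is at $(w^{-1}(c), c)$; the special-column hypothesis (2) combined with $w^{-1}(c) > r$ (where $r$ is the largest row with $(r, c) \in D(w)$) forces $(w^{-1}(c), c) \notin \Phi(w)$. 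Hence $\tilde p_w$ vanishes on all of $I$, so $\tilde p_w \in Y_e \subset Y \cap V(J)$. By Proposition~\ref{prop:tangent}, $T_{\tilde p_w} Y$ is spanned by the coordinates in $\Phi(w) \setminus D(w)$; and since $J \cap D(w) = \emptyset$, the hyperplanes defining $V(J)$ cut $T_{\tilde p_w} Y$ transversally. Thus $Y \cap V(J)$ is smooth of dimension $\dim Y - |J| = \dim Y_e$ at $\tilde p_w$, and $Y_e$ (irreducible of this dimension, containing $\tilde p_w$) is the irreducible component through $\tilde p_w$.

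Applying the effectivity principle, we obtain
\[
[\overline{Y}] \smile [V(J)] = [\overline{Y_e}] + E
\]
in $H^{*}((\mathbb{P}^1)^{\Phi(w)})$ with $E$ effective. Now $[\overline{Y}] = \mathfrak{F}_w(z)$, $[V(J)] = \prod_{(i,c) \in J} z_{(i,c)}$, and $[\overline{Y_e}] = \mathfrak{F}_{w'}(z_{\iota_{*,c}(\cdot)}) \cdot \prod_{(i,c) \in I} z_{(i,c)}$, because $\overline{Y_e}$ fixes every $I$-coordinate to $0$, constrains the $\iota_{*,c}(\Phi(w'))$-coordinates by $\mathfrak{F}_{w'}$, and leaves the $K'$-coordinates entirely free. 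Extract the coefficient of $z^{S \cup J}$ on both sides. If $S$ meets $K'$, then $\llbracket z^{\iota_{*,c}^{-1}(S)} \rrbracket \mathfrak{F}_{w'} = 0$ by degree reasons and the inequality is trivial. Otherwise $S \cap J = \emptyset$ (from $S_{*,c} = D(w)_{*,c}$) and $S \setminus D(w)_{*,c} \subset \iota_{*,c}(\Phi(w'))$; the left side then yields $\llbracket z^S \rrbracket \mathfrak{F}_w(z)$, while on the right, using $I = D(w)_{*,c} \sqcup J$ and $S \cap I = D(w)_{*,c}$, the first term contributes $\llbracket z^{\iota_{*,c}^{-1}(S)} \rrbracket \mathfrak{F}_{w'}(z)$, with $E$ contributing non-negatively. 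The inequality follows.

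The main obstacle is the component-structure verification in the middle paragraph: a priori, $\overline{Y_e}$ could be contained in a larger excess-dimensional component of $\overline{Y} \cap V(J)$. The special-column hypothesis (2) is exactly what guarantees the existence of the smooth point $\tilde p_w \in Y_e$ at which the intersection has the expected dimension, ruling this out.
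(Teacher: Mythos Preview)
Your proof is correct and follows the same approach as the paper's: both check that the embedded $X_{w'}$-locus is an irreducible component of $X_w \cap \mathbb{A}^{\Phi(w)} \cap V(a_{i,c} : (i,c) \in J)$ by computing the tangent space at the point $\tilde p_w$ obtained from the permutation matrix (using Proposition~\ref{prop:tangent} and the fact that the special-column condition forces $(w^{-1}(c),c)\notin\Phi(w)$), and then invoke the effectivity discussion at the start of Section~\ref{sec:geom}. Your explicit accounting for the extra affine factor $\mathbb{A}^{K'}$ with $K' = \Phi(w)\setminus I\setminus\iota_{*,c}(\Phi(w'))$ is in fact more careful than the paper's treatment, where Lemma~\ref{lem:containmentcol} and the ensuing dimension count tacitly assume $K'=\emptyset$; as your argument shows, this factor is harmless for the multidegree comparison.
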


\begin{proof}
By the discussion at the beginning of Section~\ref{sec:geom}, it suffices to show that
$$X_{w'} \cap \mathbb{A}^{\Phi(w') \cup E} \text{ is an irreducible component of }X_w \cap \mathbb{A}^{\Phi(w)} \cap V(a_{i,c} : (i, c) \in \Phi(w) \setminus D(w)),$$
where we are using the identification in Lemma~\ref{lem:containmentcol}. For this, it suffices to check that there is a point of $X_w \cap \mathbb{A}^{\Phi(w)} \cap V(a_{i,c} : (i, c) \in \Phi(w) \setminus D(w))$ which is contained in $X_{w'} \cap \mathbb{A}^{\Phi(w') \cup E}$ where the dimension of the tangent space of $X_w \cap \mathbb{A}^{\Phi(w)} \cap V(a_{i,c} : (i, c) \in \Phi(w) \setminus D(w))$ is equal to the dimension of $X_{w'} \cap \mathbb{A}^{\Phi(w') \cup E}$. 

Let $p$ be the point of $X_w \cap \mathbb{A}^{\Phi(w)}$ which is $1$ at each coordinate of the form $(k, w(k))$ in $\Phi(w)$ and is $0$ at all other coordinates. Because $X_w$ is the product of the interesting part with some affine factors, Proposition~\ref{prop:tangent} implies that the tangent space of $X_w \cap \mathbb{A}^{\Phi(w)}$ at $p$ is spanned by matrix entries in $\Phi(w) \setminus D(w)$. The ``special column'' condition guarantees that $(w^{-1}(c), c) \not \in \Phi(w)$, so $p \in X_w \cap \mathbb{A}^{\Phi(w)} \cap V(a_{i,c} : (i, c) \in \Phi(w) \setminus D(w))$. From the definition of $\del_{*,c}(w)$, we see that $p \in X_{w'} \cap \mathbb{A}^{\Phi(w') \cup E}$, and the tangent space of $X_w \cap \mathbb{A}^{\Phi(w)} \cap V(a_{i,c} : (i, c) \in \Phi(w) \setminus D(w))$ at $p$ has dimension equal to the dimension of $X_{w'} \cap \mathbb{A}^{\Phi(w') \cup E}$, as desired. 
\end{proof}

We now prove a version of Proposition~\ref{prop:deletearbitrary} where the condition on how $S$ meets $D(w)_{*,c}$ is different. Recall that $w' = \del_{*,c}(w)$.

\begin{proposition}\label{prop:deleteoneoff}
Let $c$ be a special column such that $D(w)_{*,c} \setminus \dom(w)$ is connected.
Suppose $S \subset \Phi(w)$ has $|S_{*,c}| = |D(w)_{*,c}|$, $\dom(w)_{*, c} \subset S$, $|S_{*, c} \setminus D(w)| = 1$, and $|S| = \ell(w)$. Then
$$\llbracket z^S\rrbracket \mathfrak{F}_{w}(z) \ge \llbracket z^{\iota_{*,c}^{-1}(S)}\rrbracket\mathfrak{F}_{w'}(z).$$
\end{proposition}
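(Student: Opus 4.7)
The plan is to follow the strategy of Proposition~\ref{prop:deletearbitrary}: exhibit the $\iota_{*,c}$-embedded copy of $X_{w'}\cap \mathbb{A}^{\Phi(w')}$ as an irreducible component of expected dimension in
\[
Y := X_w \cap \mathbb{A}^{\Phi(w)} \cap V(a_{i,c} : (i,c)\in \Phi(w)_{*,c}\setminus S_{*,c}),
\]
so that the intersection-theoretic principles at the start of Section~\ref{sec:geom} yield the claimed inequality.  Let $(i_0,c)$ be the unique element of $D(w)_{*,c}\setminus S$; the connectedness hypothesis places it in the non-dominant interval $[a,b]\times\{c\}$.  Let $(i^*,c)$ be the unique element of $S_{*,c}\setminus D(w)$; then $(i^*,c)$ must sit in the gap between $\dom(w)_{*,c}$ and $[a,b]\times\{c\}$, so $i^*<i_0$, $w(i^*)<c$, and $w(i_0)>c$.

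For the test point I would take $p := p_{w^*}|_{\Phi(w)}$, where $w^* = w\cdot t_{i^*,i_0}$ is obtained by swapping rows $i^*$ and $i_0$ of the permutation matrix of $w$.  Because $w(i^*)<w(i_0)$ we have $w\le w^*$ in Bruhat order, so $p_{w^*}\in X_w$.  The special column condition forces $w^{-1}(c)>b$, placing the unique column-$c$ entry of $p_{w^*}$ outside $\Phi(w)$; hence every column-$c$ coordinate of $p$ vanishes on $\mathbb{A}^{\Phi(w)}$, so $p$ lies on the $\iota_{*,c}$-embedded copy of $X_{w'}\cap \mathbb{A}^{\Phi(w')}$ (identified via Lemma~\ref{lem:containmentcol}) and therefore on $Y$.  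A direct check also gives $(i^*,c)\in D(w^*)$: indeed $w^*(i^*)=w(i_0)>c$ and $(w^*)^{-1}(c)=w^{-1}(c)>i^*$.

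The heart of the argument is a tangent space computation at $p$.  When $w^*$ happens to be a Bruhat cover of $w$, Proposition~\ref{prop:othertangent} describes $T_pX_w$ explicitly as the span of the coordinates in $[n]^2\setminus(D(w^*)\setminus\{(i^*,w(i^*))\})$; intersecting with the tangent of $V(a_{i,c}:(i,c)\in \Phi(w)_{*,c}\setminus S_{*,c})$ is transverse because the two sets $D(w^*)_{*,c}=(D(w)_{*,c}\setminus\{(i_0,c)\})\cup\{(i^*,c)\}$ and $\Phi(w)_{*,c}\setminus S_{*,c}=(\Phi(w)_{*,c}\setminus D(w)_{*,c}\setminus\{(i^*,c)\})\cup\{(i_0,c)\}$ are disjoint.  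The resulting tangent space at $p$ has exactly the expected dimension and coincides with the tangent space of the embedded $X_{w'}$, so the latter is an irreducible component of $Y$, finishing this case.  The main obstacle is the situation where $w^*$ is not a cover, i.e., some row $a'\in(i^*,i_0)$ has $w(a')\in(w(i^*),w(i_0))$: here Proposition~\ref{prop:othertangent} no longer applies, and one must produce the tangent equation $a_{i^*,c}=0$ for $X_w$ at $p$ directly from a Fulton generator.  The connectedness of $D(w)_{*,c}\setminus\dom(w)$ should guarantee that a minor associated with an appropriate essential southeast corner of $D(w)$ has an $(i^*,c)$-cofactor at $p$ equal to the determinant of a triangular partial permutation matrix, hence nonzero; verifying this cofactor analysis and checking transversality with the remaining equations cutting out $Y$ is the technical crux of the rest of the proof.
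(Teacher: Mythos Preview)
Your overall strategy and the handling of the cover case match the paper's proof almost verbatim. The paper packages the geometric claim as Lemma~\ref{lem:coliscomp} and then deduces the proposition from it exactly as you describe, via the intersection-theoretic principle at the start of Section~\ref{sec:geom}. Your test point $p_{w^*}$ with $w^*=wt_{i^*,i_0}$, your appeal to Proposition~\ref{prop:othertangent}, and your transversality count (namely that $D(w^*)_{*,c}=S_{*,c}$ is disjoint from the set $\Phi(w)_{*,c}\setminus S_{*,c}$ of extra linear equations) are precisely what the paper does.

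Where you and the paper diverge is the non-cover case. You propose a direct cofactor analysis of a suitable Fulton minor, which you leave as ``the technical crux.'' The paper instead \emph{reduces to the cover case by a row swap}: it asserts that when $wt_{i^*,i_0}$ is not a cover, the hypotheses force $w(i^*)<w(i^*+1)$, so Proposition~\ref{prop:swap} gives $s_{i^*}\cdot X_w=X_w$. One then replaces $S$ by $s_{i^*}\cdot S$, which slides the stray entry $(i^*,c)$ of $S_{*,c}$ one row down while preserving every hypothesis and both sides of the inequality; iterating pushes $i^*$ toward the non-dominant block $[a,b]\times\{c\}$, where the cover condition is automatic (since $i^*+1=a$ gives $w(i^*+1)>c>w(i^*)$, and more generally the intermediate row that obstructs the cover is itself a candidate after the swap). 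This swap reduction is much lighter than the cofactor computation you sketch, and it is the idea your proposal is missing. Your route is not wrong in principle---one can indeed try to exhibit the tangent equation $a_{i^*,c}=0$ from a minor whose $(i^*,c)$-cofactor is a permutation-matrix determinant---but carrying that out in full generality is exactly the work the paper's swap trick avoids.
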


In particular, if every $(i, c) \in D(w)_{*, c} \setminus \dom(w)$ has $r_{i \times c}(w) = 1$, or if $|D(w)_{*,c} \setminus \dom(w)| = 1$, then either Proposition~\ref{prop:deletearbitrary} or Proposition~\ref{prop:deleteoneoff} gives that $\llbracket z^S\rrbracket \mathfrak{F}_{w}(z) \ge \llbracket z^{\iota_{*,c}^{-1}(S)}\rrbracket \mathfrak{F}_{w'}(z).$

\begin{lemma}\label{lem:coliscomp}
Let $c$ be a special column such that $D(w)_{*,c} \setminus \dom(w)$ is connected.
Suppose $S \subset \Phi(w)$ has $|S_{*,c}| = |D(w)_{*,c}|$, $\dom(w)_{*, c} \subset S$, $S \subset \Phi(w)$, and $|S_{*, c} \setminus D(w)| = 1$. Then, under the identification in Lemma~\ref{lem:containmentcol}, $X_{w'} \cap \mathbb{A}^{\Phi(w') \cup E}$ is an irreducible component of 
$$X_w \cap \mathbb{A}^{\Phi(w)} \cap \bigcap_{\substack{i, \, (i, c) \not \in S \\ (i, c) \in \Phi(w)}}V(a_{i,c}).$$
\end{lemma}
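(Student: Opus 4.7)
The plan is to prove $X_{w'}$ is an irreducible component of $Y := X_w \cap \mathbb{A}^{\Phi(w)} \cap \bigcap_{(i,c)\in\Phi(w)\setminus S} V(a_{i,c})$ by establishing the containment $X_{w'} \subset Y$ and exhibiting a specific point $\widetilde{p} \in X_{w'}$ at which $\dim T_{\widetilde{p}}\, Y = \dim X_{w'}$. Combined with irreducibility of $X_{w'}$, this will give the desired conclusion.

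By the connectivity hypothesis we may write $D(w)_{*,c}\setminus \dom(w) = \{(a,c),\dotsc,(b,c)\}$ and $\dom(w)_{*,c} = \{(1,c),\dotsc,(d,c)\}$. Let $(p,c)$ denote the unique element of $D(w)_{*,c}\setminus S_{*,c}$, which lies in $\{(a,c),\dotsc,(b,c)\}$, and let $(q,c)$ denote the unique element of $S_{*,c}\setminus D(w)$, which lies in the gap $\{(d+1,c),\dotsc,(a-1,c)\}$; in particular $w(p) > c$ and $w(q) < c$. The containment $X_{w'} \subset Y$ then follows from Lemma~\ref{lem:containmentcol}, since $X_{w'}$ corresponds under the identification $\iota_{*,c}$ to zeroing all column $c$ coordinates in $\Phi(w)$, a stronger condition than the partial zeroing defining $Y$.

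At the naive point $p_w$ (with entries outside $\Phi(w)$ zeroed out), Proposition~\ref{prop:tangent} gives $T_{p_w}(X_w \cap \mathbb{A}^{\Phi(w)}) = \mathbb{C}^{\Phi(w)\setminus D(w)}$. The gap conditions $a_{i,c}=0$ for $i\in\{d+1,\dotsc,a-1\}\setminus\{q\}$ cut out tangent directions, but the condition $a_{p,c} = 0$ contributes nothing because $(p,c) \in D(w)$, so $T_{p_w} Y$ has dimension $\dim X_{w'} + 1$, one too large. The remedy is to replace $p_w$ by $\widetilde{p} = p_v$ for a suitable cover $v$ of $w$ in Bruhat order, chosen so that $(p,c) \notin D(v)$ (making $a_{p,c}$ a tangent direction at $p_v$ via Proposition~\ref{prop:othertangent}) and so that $v^{-1}(c) > b$ (so that $p_v$ lies in $X_{w'}$ under the identification). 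A natural construction is $v = w \cdot t_{p,k}$ for some $k > b$ with $w(k) < c$; the special column hypothesis forbidding $D(w)$-boxes in rows $> b$ and columns $\ge c$ guarantees that such a $k$ exists, while the connectivity of $D(w)_{*,c} \setminus \dom(w)$ provides enough combinatorial flexibility to refine $k$ so that $v$ is a genuine cover.

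The main obstacle lies in the tangent-space bookkeeping at $p_v$: one must compare $D(v)$ to $D(w)$ carefully, account for the single ``extra'' tangent direction $(p, w(p))$ that Proposition~\ref{prop:othertangent} contributes at $p_v$, and verify that after imposing all the conditions defining $Y$ the resulting tangent space has dimension exactly $\dim X_{w'}$. The connectivity hypothesis on $D(w)_{*,c} \setminus \dom(w)$ is used throughout: it ensures that the rows $d+1,\dotsc,a-1$ all have $w(\cdot) < c$ and rows $a,\dotsc,b$ all have $w(\cdot) > c$, which both pins down the combinatorial relationship between $D(w)$ and $D(v)$ and rules out intermediate values that would violate the cover condition for $v$.
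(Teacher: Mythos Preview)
Your overall strategy---show the containment $X_{w'}\subset Y$ and then exhibit a point of $X_{w'}$ where $\dim T\, Y=\dim X_{w'}$---matches the paper's. The problem is your construction of the point.

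You propose $v=w\cdot t_{p,k}$ with $k>b$ and $w(k)<c$. Since $p\le b<k$ and $w(p)>c>w(k)$, the pair $(p,k)$ is an inversion of $w$, so this transposition \emph{decreases} length: $\ell(v)<\ell(w)$. Thus $v$ does not cover $w$, Proposition~\ref{prop:othertangent} does not apply, and more fatally $p_v\notin X_w$ (membership of a permutation matrix $p_v$ in $X_w$ is equivalent to $v\ge w$ in Bruhat order). So $p_v\notin Y$, and no tangent-space computation at $p_v$ can say anything about $Y$. No choice of $k>b$ with $w(k)<c$ can repair this: achieving $(p,c)\notin D(v)$ via $v(p)=w(k)<c$ forces the length to drop.

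The paper instead swaps the \emph{other} distinguished row: with your notation, take $v=w\, t_{q,p}$, where $q$ is the gap row in $S_{*,c}\setminus D(w)$ and $p$ is the removed row in $D(w)_{*,c}\setminus S$. Here $q<p$ and $w(q)<c<w(p)$, so length increases; when $v$ is a genuine cover, Proposition~\ref{prop:othertangent} gives $T_{p_v}(X_w\cap\mathbb{A}^{\Phi(w)})$ spanned by entries outside $D(v)$ together with $(q,w(q))$. Since $w(q)<c$, the extra direction is not in column $c$, and one checks $D(v)_{*,c}=S_{*,c}$, so the constraints indexed by $\Phi(w)_{*,c}\setminus S$ each kill one tangent direction, yielding exactly $\dim X_{w'}$. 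When $w\, t_{q,p}$ is not a cover, the paper reduces by applying Proposition~\ref{prop:swap} to permute rows (replacing $S$ by $s_q\cdot S$) until it is. Your gap row $q$ is thus essential to the argument and cannot be ignored.
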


\begin{proof}
Let $(i, c)$ be the unique element of $S_{*,c} \setminus D(w)$, and let $(j, c)$ be the unique element of $D(w)_{*,c} \setminus S$. The assumptions guarantee that $i < j$, $w(i) < w(j)$, and that $(w^{-1}(c), c) \not \in \Phi(w)$. Because $i < j$, the map which adds the $i$th row to the $j$th induces an automorphism of $X_w \cap \mathbb{A}^{\Phi(w)}$ which preserves $X_{w'} \cap \mathbb{A}^{\Phi(w') \cup E}$. 

Let $p$ be the point of $\mathbb{A}^{\Phi(w)}$ which is $1$ at each coordinate of the form $(k, w(k))$ in $\Phi(w)$, is $1$ at $(j, w(i))$, and is $0$ at all other points. Note that $p$ lies in $X_{w'} \cap \mathbb{A}^{\Phi(w') \cup E}$. It follows from Proposition~\ref{prop:tangent} that the tangent space to $X_w \cap \mathbb{A}^{\Phi(w)}$ has dimension $|\Phi(w)| - \ell(w)$, and that each $a_{s,c}$ with $(s, c) \not \in S$ and $(s, c) \in \Phi(w)$ cuts down the dimension of the tangent space at $p$ by $1$. This implies the result. 
\end{proof}

\begin{proof}[Proof of Proposition~\ref{prop:deleteoneoff}]
Lemma~\ref{lem:coliscomp} implies that $\overline{X}_{w'} \cap (\mathbb{P}^1)^{\Phi(w') \cup E}$ is an irreducible component of the expected dimension of 
$$\overline{X}_w \cap (\mathbb{P}^1)^{\Phi(w)} \cap \bigcap_{\substack{i, \, (i, c) \not \in S \\ (i, c) \in \Phi(w)}}V(a_{i,c}).$$
The discussion at the beginning of Section~\ref{sec:geom} implies the result. 
\end{proof}

We say that $r$ is a \textbf{special row} for $w$ if $r$ is a special column for $w^{-1}$. If $r$ is a special row, define $\del_{r, *}(w) = \del_{*, r}(w^{-1})^{-1}$, and define $\iota_{r,*}$ in a similar fashion to $\iota_{*,c}$. As $X_w$ is isomorphic to $X_{w^{-1}}$ via the transpose map, all the results above can be immediately adapted to deleting a special row.

\begin{example}\label{ex:cannotdelete}
Let $w = 145263$, and let $S = \{(1,1), (2,1), (3,3), (4,3), (5,3)\}$. Then the third column is special, and $\del_{*,3}(w) = 134256$. We have $\llbracket z^S\rrbracket \mathfrak{F}_{w}(z) = 0$ because $S^c$ contains the spread of the $2 \times 2$ minor in rows $1$ and $2$ and columns $2$ and $3$, but $\llbracket z^{\iota_{*,3}^{-1}(S)}\rrbracket \mathfrak{F}_{\del_{*,3}(w)}(z) = 1$. This shows that the hypothesis that $D(w)_{*,3} \setminus \dom(w)$ is connected cannot be omitted in Proposition~\ref{prop:deleteoneoff}.
\end{example}

\subsection{Deleting a solid column}
In the proof of Theorem~\ref{thm:main}, we will need to delete another class of columns whose intersection with the Rothe diagram is contained in the dominant part. We will analyze this case by reducing to the case of special columns. 

We  say that a column $c$ is \textbf{solid} if there is some $k > 0$ such that
\begin{enumerate}
    \item  $c+k$ is a special column with $D(w)_{*,c+k}\setminus \dom(w)=\{(r, c+k)\}$ for some row $r$,
    \item for all $0\le k' <k$, $D(w)_{*,c+k'}\subset \dom(w)_{[r-2], c+k'}$.
\end{enumerate}

 We  define a permutation $\del_{*,c}(w) \in S_n$ which ``deletes'' the solid column $c$. 
Define
\[B_i\coloneqq\{j:(i,j)\in D(w), j<c\}\cup \{j-1:(i,j)\in D(w), j>c, (i,j)\neq(r,c+k)\},\]
and let \[\del_{*,c}(w)(i)\coloneqq \min([n]\setminus (B_i\cup \del_{*,c}(w)([i-1]))).\]

Informally, the Rothe diagram of $w'$ is obtained from the Rothe diagram of $w$  by deleting the $c$th column, removing the unique square in $D(w)_{*, c+k} \setminus \dom(w)$, and sliding columns $\{c+1, \dotsc, n\}$ to the left. See Figure~\ref{fig:deletecols}
for a comparison between deleting a special column
and deleting a solid column. Set $w' = \del_{*,c}(w)$, and let $E = \Phi(w) \setminus (\iota_{*,c}(\Phi(w')) \cup \Phi(w)_{*,c})$. 
\begin{figure}
    \centering
    \includegraphics[width=0.8\linewidth]{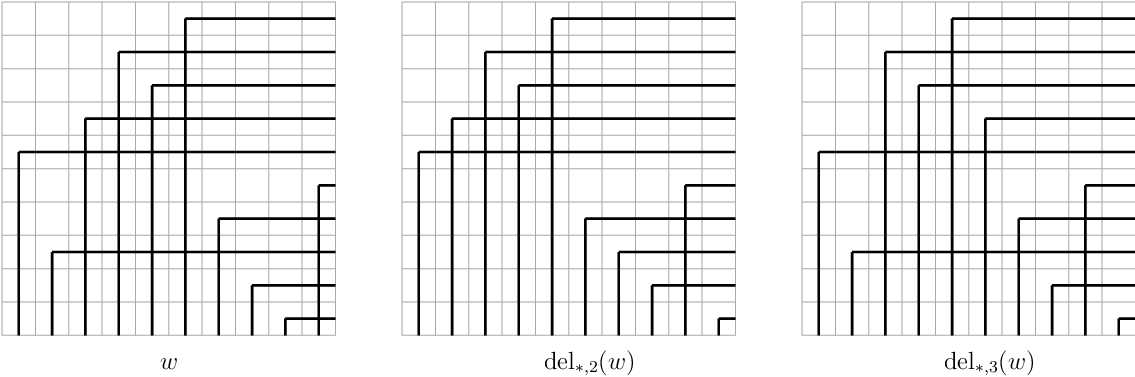}
    \caption{For the given $w$, $\del_{*,2}(w)$ is the deletion of a special column, and $\del_{*,3}(w)$ is the deletion of a solid column.}
    \label{fig:deletecols}
\end{figure}
\begin{lemma}\label{lem:solidcontaiment}
The map $\mathbb{A}^{\Phi(w') \cup E} \to \mathbb{A}^{\Phi(w)}$ induced by $\iota_{*,c}$ restricts to an isomorphism from $X_{w'} \cap \mathbb{A}^{\Phi(w') \cup E}$ to $X_w \cap \mathbb{A}^{\Phi(w)} \cap V(a_{i, c} : (i, c) \in \Phi(w))$.
\end{lemma}

\begin{proof}
After setting $a_{i, c} = 0$ for $(i, c) \in \Phi(w)$, the Fulton generators for $w$ either vanish or become Fulton generators for $w'$, and all Fulton generators for $w'$ are obtained in this way. 
\end{proof}

\begin{proposition}\label{prop:deletesolid}
Suppose that $S \subset \Phi(w)$ satisfies
$D(w)_{*,c}\subset S$,
$|S\cap \Phi(w)_{*,c}|=|D(w)_{*,c}|+1$, and $|S| = \ell(w)$.
Then 
$$\llbracket z^S\rrbracket \mathfrak{F}_{w}(z) \ge \llbracket z^{\iota_{*,c}^{-1}(S)}\rrbracket \mathfrak{F}_{w'}(z).$$
\end{proposition}

As before, Proposition~\ref{prop:deletesolid} follows from the following lemma.

\begin{lemma}
Suppose that $S \subset \Phi(w)$ satisfies
$D(w)_{*,c}\subset S$ and
$|S\cap \Phi(w)_{*,c}|=|D(w)_{*,c}|+1$. Then, under the identification induced by $\iota_{*,c}$, $X_{w'} \cap \mathbb{A}^{\Phi(w') \cup E}$ is an irreducible component of 
$$X_w \cap \mathbb{A}^{\Phi(w)} \cap \bigcap_{\substack{i, \, (i, c) \not \in S \\ (i, c) \in \Phi(w)}}V(a_{i,c}).$$
\end{lemma}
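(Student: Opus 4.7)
I plan to prove the lemma by induction on $k$, where $c, c+1, \dotsc, c+k-1$ are the consecutive solid columns terminating in the special column $c+k$ (with $|D(w)_{*,c+k} \setminus \dom(w)| = 1$). This aligns with the recursive structure of both ``solid column'' and $\del_{*,c}$, so each inductive step corresponds to one unwinding of the recursion. The base case $k=1$ reduces to Lemma~\ref{lem:coliscomp} applied to the adjacent special column $c+1$, after an appropriate transport of the constraint set $S$.

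At each step I case-split on whether $w^{-1}(c) < w^{-1}(c+1)$ or $w^{-1}(c) > w^{-1}(c+1)$, mirroring the definition of $\del_{*,c}(w)$. In the first case, Proposition~\ref{prop:swap} provides an automorphism $\phi$ of $\mathbb{A}^{n^2}$ swapping columns $c$ and $c+1$ and fixing $X_w$, while $\del_{*,c}(w) = \del_{*,c+1}(w)$ by definition. Applying this swap translates the constraint on column $c$ to the analogous constraint on column $c+1$, and the transported set $\phi(S)$ still satisfies $|\phi(S) \cap \Phi(w)_{*,c+1}| = 1$, reducing to either Lemma~\ref{lem:coliscomp} (if $k=1$) or the induction hypothesis applied to column $c+1$. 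In the second case, the cycle construction $\widetilde{w}$ satisfies $\del_{*,c}(w) = \del_{*,c}(\widetilde{w})$, and iterated applications of Proposition~\ref{prop:swap} on rows realize the cycle of value-transpositions as a chain of valid row swaps, yielding $X_w = X_{\widetilde{w}}$ on the relevant slice; the same inductive reduction then applies inside $\widetilde{w}$, where $c$ is now positioned so that Lemma~\ref{lem:coliscomp} (or the induction hypothesis for the smaller value of $k$) applies.

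The principal obstacle will be the combinatorial bookkeeping: after each reduction I must verify that the transported $S$ has cardinality one on $\Phi_{*,c+1}$, that the rows of $\Phi(w)_{*,c}$ and $\Phi(w)_{*,c+1}$ are compatible so the column swap acts on $\mathbb{A}^{\Phi(w)}$, and that the $\widetilde{w}$-cycle genuinely turns $c$ into a column to which Lemma~\ref{lem:coliscomp} can be applied. Once these checks succeed, Proposition~\ref{prop:tangent}, or Proposition~\ref{prop:othertangent} at the appropriate covering permutation matrix in the cycle case, supplies the tangent-space dimension at the reference point. This dimension equals $|\Phi(w')| - \ell(w')$, the dimension of $X_{w'} \cap \mathbb{A}^{\Phi(w')}$, certifying it as an irreducible component of the expected dimension inside the constraint variety and completing the induction.
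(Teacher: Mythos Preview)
Your first case ($w^{-1}(c) < w^{-1}(c+1)$) is essentially what the paper does: use the column version of Proposition~\ref{prop:swap} to move the constraint from column $c$ to column $c+1$, then invoke induction on $k$ (with Lemma~\ref{lem:coliscomp} as the base case).

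The second case, however, has a genuine gap. You write that ``iterated applications of Proposition~\ref{prop:swap} on rows realize the cycle of value-transpositions as a chain of valid row swaps, yielding $X_w = X_{\widetilde{w}}$.'' This cannot be right: Proposition~\ref{prop:swap} produces \emph{automorphisms} of a fixed $X_w$ (i.e., $s_i \cdot X_w = X_w$), it does not identify $X_w$ with the matrix Schubert variety of a different permutation. In fact $X_w \neq X_{\widetilde{w}}$ for your $\widetilde{w}$: in the definition of $\del_{*,c}$ the cycle makes $c$ a special column of $\widetilde{w}$, so $D(\widetilde{w})_{*,c}$ picks up a non-dominant box while $D(w)_{*,c}\subset\dom(w)$; hence $\ell(\widetilde{w})\neq\ell(w)$ and the two varieties even have different dimensions. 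Row swaps cannot bridge this.

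What is actually needed is a relation of the form $X_{\widetilde{w}} \cap V(a_{\,\cdot\,,c}) = X_w$, i.e.\ cotransition (Proposition~\ref{prop:geometriccotransition}), not a symmetry. The paper carries this out one step at a time: it introduces a secondary induction on $f_c(w)-f_{c+1}(w)$ (where $f_d(v)$ is the first non-dominant row of $\Phi(v)_{*,d}$), sets $\widetilde{w} = w\,t_{f_c(w)-1,\,w^{-1}(c)}$ so that $w$ is the \emph{unique} cover in $L_{f_c(w)-1}(\widetilde{w})$, and then uses $X_{\widetilde{w}}\cap V(a_{f_c(w)-1,c}) = X_w$ together with $\del_{*,c}(\widetilde{w}) = w'$ and $\Phi(\widetilde{w})=\Phi(w)$ to reduce $f_c(w)-f_{c+1}(w)$ by one. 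Your plan should replace the row-swap argument in the second case with this cotransition step and add the secondary induction parameter.
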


\begin{proof}
For a column $d$ and a permutation $v$, let $f_d(v)$ be the least $i$ such that $(i, d) \not \in \dom(v)$. Let $k$ be the least integer such that the $(c + k)$th column is not solid. 
We prove the lemma by induction first on $k$,  and then on $f_{c}(w) - f_{c+1}(w)$. 

First suppose that $f_{c}(w) = f_{c+1}(w)$. The solid column condition implies that this happens if and only if $w^{-1}(c) < w^{-1}(c + 1)$.
Then, by Proposition~\ref{prop:swap}, $X_w \cdot s_c = X_w$. If $k=1$, then the result follows from Lemma~\ref{lem:coliscomp}. If $k > 1$, then the result follows from induction on $k$. 

Now suppose that $f_c(w) > f_{c+1}(w)$. 
Let $\widetilde{w} = w t_{f_c(w) - 1, w^{-1}(c)}$. Then $c$ is still a solid column for $\widetilde{w}$, with $f_c(\widetilde{w}) - f_{c+1}(\widetilde{w}) = f_c(w) - f_{c+1}(w) - 1$. If we delete column $c$ from $\widetilde{w}$, then we still get $w'$.

By induction on $f_c(w) - f_{c+1}(w)$, we know that $X_{w'} \cap \mathbb{A}^{\Phi(w') \cup E}$ is an irreducible component of 
$$X_{\widetilde{w}} \cap V(a_{f_c(w)-1,c}) \cap \mathbb{A}^{\Phi(\widetilde{w})}  \cap \bigcap_{i, \, (i, c) \not \in S}V(a_{i,c}).$$
By Proposition~\ref{prop:geometriccotransition}, $X_{\widetilde{w}} \cap V(a_{f_c(w)-1,c}) = X_{w}$  because $L_{f_c(w)-1}(\widetilde{w}) = \{w\}$. As $\mathbb{A}^{\Phi(\widetilde{w})} = \mathbb{A}^{\Phi(w)}$, this gives the result. 
\end{proof}

We say that a row $r$ is a \textbf{solid row}  of $w$ if column $r$ is a solid column of $w^{-1}$. In this way, we can use Proposition~\ref{prop:deletesolid} to delete solid rows.

\subsection{Transition}
\label{subsec:transition}
In this section, we prove a version of Lascoux's transition formula \cite{Lascoux}.

\begin{proposition}\label{prop:transition}
Let $w \in S_n$, and let $(r, c) \in \mathrm{ess}(w)$ such that for all $(r',c')\in D(w)$ with $(r', c') \not= (r, c)$, either $r'<r$ or $c'<c$. Let $w':=wt_{r,w^{-1}(c)}$. 
Then for any $S$ which contains $(r, c)$, we have
$$\llbracket z^S\rrbracket \mathfrak{F}_{w}(z) = \llbracket z^{S \setminus \{(r, c)\}}\rrbracket \mathfrak{F}_{w'}(z).$$
\end{proposition}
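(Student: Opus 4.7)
The strategy is to compare $\mathfrak{F}_w$ and $\mathfrak{F}_{w'}$ via the projection $\rho \colon (\mathbb{P}^1)^{n^2} \to V := (\mathbb{P}^1)^{n^2 \setminus \{(r,c)\}}$ that forgets the $(r,c)$-coordinate. Decomposing $\mathfrak{F}_w = z_{r,c}A + B$ and $\mathfrak{F}_{w'} = z_{r,c}A' + B'$, where $A, A', B, B'$ do not involve $z_{r,c}$, the identity to prove becomes $A = B'$. A K\"unneth computation --- using $z_{r,c}^2 = 0$ together with the fact that pushforward along $\mathbb{P}^1 \to \mathrm{pt}$ sends $[\mathrm{pt}] \mapsto 1$ and $[\mathbb{P}^1] \mapsto 0$ --- shows that $\rho_*[\overline{X}_w] = A \smallfrown [V]$ and $\rho_*[\overline{X}_{w'}] = A' \smallfrown [V]$; in particular $A$ will be the fine multidegree polynomial of $\rho_*[\overline{X}_w]$ in $V$.

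My first substantive task is to show that the transition hypothesis forces $(r,c) \notin \Phi(w')$. Setting $i_0 := w^{-1}(c)$, the SE-most hypothesis gives $D(w) \subset [r] \times [c]$, and combined with $(r,c) \in \mathrm{ess}(w)$ it forces $w(k) \le c$ for all $r < k < i_0$ (otherwise $(k,c) \in D(w)$ would violate SE-mostness). A case analysis on potential $(k,\ell) \in D(w')$ with $k \ge r$ and $\ell \ge c$ --- exploiting that $w'$ agrees with $w$ outside positions $r$ and $i_0$ --- then rules out every such pair. Hence $a_{r,c}$ appears in no Fulton generator of $I_{w'}$, so $X_{w'} = Y \times \mathbb{A}^1_{a_{r,c}}$ with $Y := \rho(X_{w'})$, and $\overline{X}_{w'} = \overline{Y} \times \mathbb{P}^1$. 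Consequently $\mathfrak{F}_{w'}$ does not involve $z_{r,c}$, so $A' = 0$ and $B' = \mathfrak{F}_{w'} = \mathfrak{F}_{\overline{Y}}$, the fine multidegree polynomial of $\overline{Y}$ in $V$.

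The second substantive task is to show $\rho|_{\overline{X}_w} \colon \overline{X}_w \to \overline{Y}$ is birational, which will imply $\rho_*[\overline{X}_w] = [\overline{Y}]$ and therefore $A = \mathfrak{F}_{\overline{Y}} = B'$. With $m := r_{r \times c}(w)$, the Fulton generators distinguishing $I_w$ from $I_{w'}$ are the $(m+1) \times (m+1)$ minors of $A_{r \times c}$. A direct count gives $r_{(r-1)\times(c-1)}(w') = r_{(r-1)\times c}(w') = r_{r \times (c-1)}(w') = m$, so on $X_{w'}$ every $(m+1) \times (m+1)$ minor of $A_{r \times c}$ that does not use both row $r$ and column $c$ vanishes automatically. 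The remaining minors are linear in $a_{r,c}$ with leading coefficient an $m \times m$ minor of $A_{(r-1)\times(c-1)}$, which is generically nonzero on $X_{w'}$ because $\mathrm{rank}(A_{(r-1)\times(c-1)}) = m$ on the dense torus orbit of $p_{w'}$. These linear equations jointly cut out the codimension-$1$ subvariety $X_w$ of $X_{w'}$, so for generic $y \in Y$ the fiber of $\rho|_{X_w}$ over $y$ consists of exactly one point.

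The main obstacle will be the compatibility of the overdetermined system: showing that the family of linear equations from different $(m+1) \times (m+1)$ minors involving $(r,c)$ has a single solution generically on $Y$. This is resolved by recognizing that the common vanishing locus of these equations inside $X_{w'}$ is the single irreducible codimension-one subvariety $X_w$, so all these equations define the same hypersurface in the fiber $\{y\} \times \mathbb{A}^1_{a_{r,c}}$.
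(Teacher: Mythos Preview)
Your proof is correct and follows essentially the same route as the paper's: project away from the $(r,c)$-factor, show $(r,c)\notin\Phi(w')$ so that $\overline{X}_{w'}=\overline{Y}\times\mathbb{P}^1$, establish that $\overline{X}_w\to\overline{Y}$ is birational, and read off the coefficient identity from the pushforward. The only real differences are in emphasis. For birationality, the paper observes more tersely that any CDG generator whose spread contains $(r,c)$ lets one solve for $a_{r,c}$ over a dense open in $X_w$; your rank computation $r_{(r-1)\times(c-1)}(w')=m$ and the linear-in-$a_{r,c}$ minor argument amount to exactly this. Your final paragraph about compatibility of the overdetermined system is more careful than necessary: once you know $X_w$ is irreducible of codimension one in $X_{w'}=Y\times\mathbb{A}^1$ and that some equation is genuinely linear in $a_{r,c}$ (hence $X_w$ is not of the form $Y'\times\mathbb{A}^1$), the projection $X_w\to Y$ is automatically dominant and generically injective, so you need not worry about consistency across different minors. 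The paper handles this dimension point by simply noting that $(r,c)\in\Phi(w)$ forces $\dim\pi(\overline{X}_w)=\dim\overline{X}_w$.
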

The permutation $w'$ satisfies
the following:
\begin{enumerate}
 \item $\ell(w') = \ell(w) - 1$,
 \item $(r, c) \in \Phi(w)$, but $(r, c) \not \in \Phi(w')$, and
 \item For every $(i,j) \in D(w')$, we have $r_{i \times j}(w') = r_{i \times j}(w)$. 
\end{enumerate}

\begin{proof}[Proof of Proposition~\ref{prop:transition}]
Let $\pi \colon (\mathbb{P}^1)^{n^2} \to (\mathbb{P}^1)^{n^2 - 1}$ be the projection away from the factor labeled by $(r, c)$. We claim that $\pi(\overline{X}_w) = \pi(\overline{X}_{w'})$. Note that, because $(r, c) \not \in \Phi(w')$, $\overline{X}_{w'} = \pi(\overline{X}_{w'}) \times \mathbb{P}^1$, and so $\pi(\overline{X}_{w'})$ is an irreducible variety of dimension $n^2 - \ell(w') - 1 = n^2 - \ell(w)$. 

Also, $\pi(\overline{X}_w)$ is an irreducible variety of dimension $n^2 - \ell(w)$. That it is irreducible and has dimension at most $n^2 - \ell(w)$ is automatic; that it has dimension $n^2 - \ell(w)$ follows from the fact that $(r, c) \in \Phi(w)$. 

As $\pi(\overline{X}_w)$ and $\pi(\overline{X}_{w'})$ are irreducible varieties of the same dimension, it suffices to show that $\pi(X_{w'}) \supset \pi(X_w)$. 
Because $(r, c) \not \in \Phi(w')$, it is clear that the variety $\pi(X_{w'})$ is cut out by the Fulton generators for $I_{w'}$. Each Fulton generator for $I_{w'}$ is a Fulton generator for $I_w$, which implies the containment.

Note that the map $\overline{X}_w \to \pi(\overline{X}_w)$ is birational: over some dense open in $X_w$, any CDG generator whose spread contains $(r, c)$ allows us to solve for $a_{r,c}$ in terms of the other variables. 

We have 
$$[\pi(\overline{X}_{w'})] = \mathfrak{F}_{w'}([H_{1,1}], \dotsc, [H_{n,n}]) \smallfrown [(\mathbb{P}^1)^{n^2 - 1}] \in H_*((\mathbb{P}^1)^{n^2 - 1}).$$
Note that this makes sense, because $\mathfrak{F}_{w'}(z)$ does not involve the variable $z_{r,c}$ as $(r, c) \not \in \Phi(w')$. On the other hand,
$$[\pi(\overline{X}_w)] = \pi_*[\overline{X}_w] = \sum_{S \ni (r,c)} \llbracket z^{S}\rrbracket \mathfrak{F}_{w}(z) \cdot [H]^{S \setminus \{(r, c)\}} \smallfrown [(\mathbb{P}^1)^{n^2 - 1}].$$
Here $[H]^T := \prod_{(i, j) \in T} [H_{i,j}]$, and the first equality holds because the degree of the map $\overline{X}_w \to \pi(\overline{X}_w)$ is $1$. This gives the result. 
\end{proof}
Note that, unlike all other results in this section, Proposition~\ref{prop:transition} exactly computes some coefficients of the fine Schubert polynomial of $w$ in terms of the fine Schubert polynomial of $w'$.

\subsection{Relation to Schubert polynomials}

We now prove Proposition~\ref{prop:coeffbound}. Let $(\mathbb{P}^n)^n$ be the projective completion of the rows of the space of $n \times n$ matrices, so there is an inclusion of $\mathbb{A}^{n^2}$ into $(\mathbb{P}^n)^n$. For $X \subset \mathbb{A}^{n^2}$, let $\operatorname{cl}(X)$ denote the closure in $(\mathbb{P}^n)^n$, and let $\overline{X}$ be the closure in $(\mathbb{P}^1)^{n^2}$.  Let $[H_i] \in H^2((\mathbb{P}^n)^n)$ be the class of a hyperplane in the $i$th row. 

\begin{lemma}\label{lem:classYw}
We have $[\operatorname{cl}(X_w)] = \mathfrak{S}_w([H_1], \dotsc, [H_n]) \smallfrown [(\mathbb{P}^n)^n]$ in the homology of $(\mathbb{P}^n)^n$.
\end{lemma}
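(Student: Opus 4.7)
The plan is to identify $[\operatorname{cl}(X_w)]$ with the $\mathbb{Z}^n$-graded multidegree of $I_w$ and then invoke \cite{KM05}. First, I would equip $\mathbb{C}[a_{i,j}]$ with the $\mathbb{Z}^n$-grading $\deg(a_{i,j}) = e_i$. Any $k \times k$ minor of $A_{p \times q}$ using rows $r_1 < \dotsb < r_k$ is homogeneous of degree $e_{r_1} + \dotsb + e_{r_k}$, so the Fulton generators of $I_w$ are $\mathbb{Z}^n$-homogeneous; equivalently $X_w$ is invariant under the row-scaling action of $T = (\mathbb{C}^*)^n$. Introducing homogenizing variables $b_1, \dotsc, b_n$ with $\deg(b_i) = e_i$, the ring $\mathbb{C}[a_{i,j}, b_i]$ becomes the total coordinate ring of $(\mathbb{P}^n)^n$, whose $i$th factor is $\operatorname{Proj} \mathbb{C}[a_{i,1}, \dotsc, a_{i,n}, b_i]$. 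The ideal of $\operatorname{cl}(X_w)$ is then the $\mathbb{Z}^n$-multihomogenization $I_w^H$, obtained by separately homogenizing each Fulton generator using the appropriate $b_i$'s.

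Next, I would invoke the standard dictionary between $\mathbb{Z}^n$-graded multidegrees and cohomology classes in a product of projective spaces (compare \cite[Section 8.5]{MillerSturmfels} and \cite[Section 1.7]{KM05}): for a $\mathbb{Z}^n$-homogeneous ideal $\mathcal{J}$ in this ring defining a subscheme of $(\mathbb{P}^n)^n$ of pure codimension $d$, the cohomology class $[V(\mathcal{J})] \in H^{2d}((\mathbb{P}^n)^n)$ is obtained from the $\mathbb{Z}^n$-multidegree polynomial of $\mathcal{J}$ by substituting $x_i \mapsto [H_i]$. Because multihomogenization does not change the $\mathbb{Z}^n$-multidegree, the multidegree of $I_w^H$ agrees with that of $I_w$.

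Finally, the Knutson--Miller theorem \cite[Theorem A]{KM05} identifies the $\mathbb{Z}^n$-multidegree of $I_w$ in this grading with $\mathfrak{S}_w(x_1, \dotsc, x_n)$, so substituting $x_i \mapsto [H_i]$ produces the desired formula. The main subtlety is the multidegree-cohomology dictionary, but this is a routine fact in multigraded commutative algebra; the relations $[H_i]^{n+1} = 0$ in $H^*((\mathbb{P}^n)^n)$ cause no ambiguity, since $\mathfrak{S}_w$ has degree at most $n-1$ in each variable for $w \in S_n$, so no power $x_i^{n+1}$ ever appears in the substitution.
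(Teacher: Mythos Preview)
Your proposal is correct and takes essentially the same approach as the paper: both invoke the Knutson--Miller identification of the $\mathbb{Z}^n$-multidegree of $I_w$ with $\mathfrak{S}_w$ and then translate this into a cohomology class. The only cosmetic difference is that the paper passes through the projectivization in $(\mathbb{P}^{n-1})^n$ and observes that $\operatorname{cl}(X_w)$ is the cone over it, whereas you multihomogenize directly into $(\mathbb{P}^n)^n$ and note that multihomogenization preserves the multidegree; these are two phrasings of the same fact.
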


\begin{proof}
Let $(\mathbb{P}^{n-1})^n$ be the projectivization of the rows of the space of $n \times n$ matrices. 
By \cite{KM05} (see \cite[Notes to Chapter 8]{MillerSturmfels}), the class of the projectivization of the rows of $X_w$ in  $(\mathbb{P}^{n-1})^n$ is given by $\mathfrak{S}_w(x)$ evaluated at the hyperplane classes. As $\operatorname{cl}(X_w)$ is the cone over the projectivization of the rows of $X_w$, this implies the result. 
\end{proof}

\begin{proof}[Proof of Proposition~\ref{prop:coeffbound}]
Let $U = S^c$. For each $(i, j) \in U$, let $\lambda_{i,j}$ be a generic complex number, and let $V_{i,j} = \{a_{i,j} = \lambda_{i,j}\} \subset \mathbb{A}^{n^2}$.  The fundamental class of $\operatorname{cl}({V}_{i,j})$ is $[H_i]$.

As the $\lambda_{i,j}$ are generic, a version of Bertini's theorem \cite[Lemma B.9.1]{Fulton} implies that $\overline{X}_w \cap \bigcap_{(i, j) \in U} \overline{V}_{i,j}$ consists of $\llbracket z^S \rrbracket \mathfrak{F}_{w}(z)$-many reduced points which are contained in $\mathbb{A}^{n^2}$. 
 By Lemma~\ref{lem:classYw}, we have
$$ (\prod_{(i, j) \in U} [\operatorname{cl}({V}_{i,j})]) \smallfrown [\operatorname{cl}(X_w)]   = (\mathfrak{S}_w([H_1], \dotsc, [H_n]) \smallsmile \prod_{(i, j) \in U} [H_i])\smallfrown [(\mathbb{P}^n)^n] = \llbracket \prod_{(i, j) \in S}x_i \rrbracket \mathfrak{S}_w(x) \cdot [\operatorname{pt}],$$
where $[\operatorname{pt}]$ is the class of a point. 

Each of the $\llbracket z^S \rrbracket \mathfrak{F}_{w}(z)$-many reduced points is an irreducible component of $\operatorname{cl}(X_w) \cap \bigcap_{(i, j) \in U} \operatorname{cl}({V}_{i,j})$,  and these points contribute $\llbracket z^S \rrbracket \mathfrak{F}_{w}(z) \cdot [\operatorname{pt}]$ to the intersection number. Because the tangent bundle of $(\mathbb{P}^n)^n$ is globally generated,  the other irreducible components of $\operatorname{cl}(X_w) \cap \bigcap_{(i, j) \in U} \operatorname{cl}(V_{i,j})$ contribute non-negatively, which gives the result. 
\end{proof}

\begin{remark}
The above argument can be used to give a positive answer to a special case of \cite[Question 7.4]{CDGsurvey}. However, the question has a negative answer in general: a positive answer would imply that if all coefficients of $\mathfrak{S}_w(x)$ are 0 or 1, then all coefficients of the double Schubert polynomial $\mathfrak{S}_w(x, y)$ are 0 or 1. This fails for $w = 2143$.
\end{remark}

\begin{remark}
Remark~\ref{rem:CS} implies that if $\mathfrak{F}_w(z)$ has all coefficients equal to $0$ or $1$, then $I_w$ has a squarefree-supported universal Gr\"{o}bner basis. In particular, Proposition~\ref{prop:coeffbound} implies that this holds if all coefficients of $\mathfrak{S}_w(x)$ or $\mathfrak{S}_{w^{-1}}(x)$ are equal to $0$ or $1$, generalizing part of \cite[Theorem 4.6]{CDGsurvey}. However, not all $w$ for which the coefficients of $\mathfrak{F}_w(z)$ are all equal to $0$ or $1$ satisfy this. For example, the interesting part of $X_w$ could be a product of the interesting part of the matrix Schubert varieties of two permutations $u$ and $v$, where all coefficients of $\mathfrak{S}_u(x)$ are equal to $0$ or $1$ and all coefficients of $\mathfrak{S}_{v^{-1}}(x)$ are equal to $0$ or $1$. A more subtle example is $w = 46817253$, which a Macaulay2 computation shows has all coefficients of $\mathfrak{F}_w(z)$ equal to $0$ or $1$. It would be interesting to classify permutations $w$ such that $I_w$ has a squarefree-supported universal Gr\"{o}bner basis. 
\end{remark}

\section{Description of $w$ when $\mathfrak{S}_w(x)$ and
$\mathfrak{S}_{w^{-1}}(x)$ have all coefficients equal to 0 or 1}\label{sec:patterns}
\subsection{Rothe diagrams}
In this section, we provide a characterization of the permutations $w$ for which $\mathfrak{S}_w(x)$ and $\mathfrak{S}_{w^{-1}}(x)$ have all coefficients equal to $0$ or $1$ in terms of Rothe diagrams. Our proof of Theorem~\ref{thm:main} will heavily rely on  the structure of these permutations.

\begin{proposition}\label{prop:patterncriterion}
    For $w\in S_n$, $\mathfrak{S}_w(x)$ and $\mathfrak{S}_{w^{-1}}(x)$ have all coefficients equal to $0$ or $1$ if and only if $w$ avoids 12543, 13254, 13524, 14253, 13542, 15243,  21543, 125634, 215634, 315624, 251634, 315642, and 261534.
\end{proposition}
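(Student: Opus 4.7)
The plan is to deduce Proposition~\ref{prop:patterncriterion} from the single-sided pattern-avoidance characterization of Fink, M\'esz\'aros, and St.~Dizier \cite{FMS}, which furnishes a finite forbidden-pattern list $\mathcal{P}$ such that $\mathfrak{S}_w(x)$ has coefficients in $\{0,1\}$ if and only if $w$ avoids every element of $\mathcal{P}$. Since pattern containment commutes with inversion, meaning $w$ contains $\sigma$ if and only if $w^{-1}$ contains $\sigma^{-1}$, the condition that $\mathfrak{S}_{w^{-1}}(x)$ has coefficients in $\{0,1\}$ translates to $w$ avoiding $\mathcal{P}^{-1} := \{\sigma^{-1} : \sigma \in \mathcal{P}\}$. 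Hence the combined condition is that $w$ avoids every element of $\mathcal{P} \cup \mathcal{P}^{-1}$, and it remains to identify this union with the $13$ patterns in the statement.

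The next step is the purely finite computation of $\mathcal{P}^{-1}$, pattern by pattern. Several elements of $\mathcal{P}$ are involutions (for instance, $12543$, $13254$, $21543$, $125634$, and $215634$), while the non-involutions pair up as $13524 \leftrightarrow 14253$, $13542 \leftrightarrow 15243$, $315624 \leftrightarrow 251634$, and $315642 \leftrightarrow 261534$. Taking the union then yields precisely the $13$ patterns listed in the proposition.

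Finally, one should verify minimality: no listed pattern contains a strictly smaller listed pattern as a subpattern, which would render it redundant as a forbidden pattern. The length-$5$ patterns trivially cannot contain one another, so the only nontrivial check is that no length-$6$ pattern in the list contains a length-$5$ pattern in the list. For each of the four length-$6$ patterns $\pi$, this amounts to normalizing the six length-$5$ subwords obtained by deleting a single position of $\pi$ and comparing the results against the length-$5$ part of the list. This is a bounded, direct case analysis.

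The main obstacle is primarily bookkeeping: accurately recalling the exact FMS list, inverting each element, and running the final noncontainment check. There is no conceptual difficulty beyond reducing to the one-sided theorem and closing the forbidden set under the inversion involution.
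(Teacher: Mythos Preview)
Your approach is exactly the paper's: quote the FMS list, close it under inversion, and prune redundancies. However, your enumeration of $\mathcal{P}$ is incomplete. The FMS list \cite[Theorem 4.8]{FMS} has twelve patterns, not nine: in addition to the five involutions and the four non-involutions you name ($13524$, $13542$, $315624$, $315642$), it also contains $125364$, $215364$, and $315264$. None of these three is an involution, so $\mathcal{P}\cup\mathcal{P}^{-1}$ has $5 + 2\cdot 7 = 19$ elements, not $13$. The statement of the proposition is still correct, but the missing step is the one the paper calls ``removing the redundant ones'': you must check that each of $125364$, $215364$, $315264$ contains $14253$ (delete the position with value $1$ in each case), and hence that each of their inverses $124635$, $214635$, $241635$ contains $13524$. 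Only after this pruning does the list collapse to the stated $13$ patterns.

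Your final ``minimality'' paragraph checks the wrong thing. Verifying that none of the $13$ patterns contains another is about irredundancy of the final list, which is cosmetic; what is logically required is the opposite direction, that every element of the $19$-element set $\mathcal{P}\cup\mathcal{P}^{-1}$ either appears among the $13$ or contains one of them. That is the check you omitted.
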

We denote this set of patterns by $\mathcal{P}$. See Figure~\ref{fig:allperms}.
\begin{figure}
    \centering
    \includegraphics[width=0.8\linewidth]{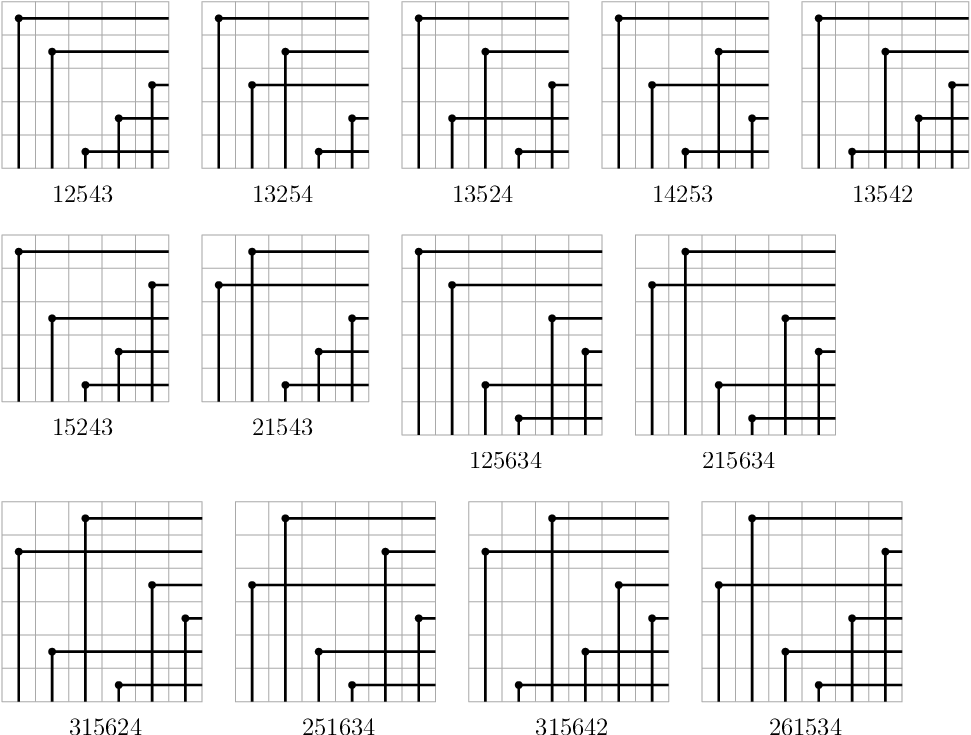}
    \caption{Patterns in $\mathcal{P}$}
    \label{fig:allperms}
\end{figure}
\begin{proof}
    By \cite[Theorem 4.8]{FMS}, $\mathfrak{S}_w(x)$ has all coefficients equal to $0$ or $1$ if and only if $w$ avoids 12543, 13254, 13524, 13542, 21543, 125364, 125634, 215364, 215634, 315264, 315624, and 315642. The set $\mathcal{P}$ is obtained by adding the inverses of these patterns and removing the redundant ones. 
\end{proof}

\begin{figure}
    \centering
    \includegraphics[width=0.55\linewidth]{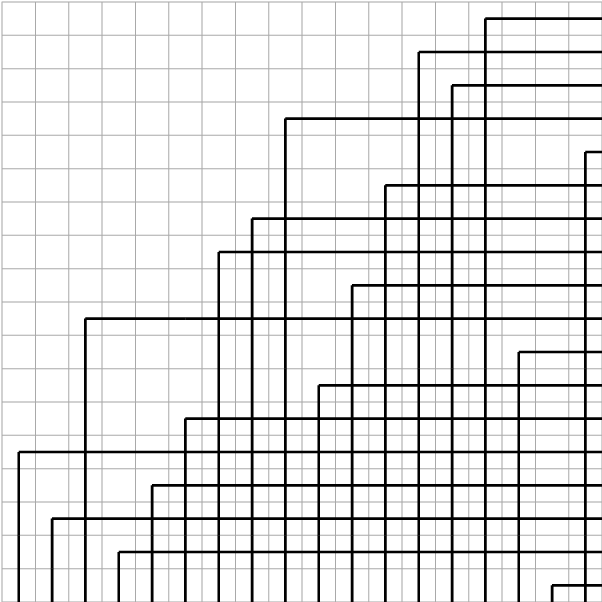}
    \caption{A large permutation which avoids $\mathcal{P}$}
    \label{fig:largeperm}
\end{figure}

We say that $(p,w(p))$ is an \textbf{outer corner} of $\dom(w)$ if $(p-1,w(p))\in \dom(w)$ and $(p,w(p)-1)\in \dom(w)$, whenever they are defined. Note that if $\dom(w)=\emptyset$, then necessarily $w(1)=1$ and $(1,1)$ is an outer corner of $\dom(w)$. We say that $(i,j)\in D(w)$
is a \textbf{northwest corner} if $(i-1,j)\not\in D(w)$
and $(i,j-1)\not\in D(w)$.

\begin{lemma}
    \label{lem:rank-1-diagram}
    Suppose $w$ avoids $\mathcal{P}$, $(p,w(p))$ is an outer corner of $\dom(w)$, and $(p+1,w(p)+1)\in D(w)$.
    Let $D_p:=\{(i,j)\in D(w): i> p, j> w(p)\}$, and let $D_p'\subset D_p$ be the region connected to $(p+1, w(p)+1)$. Then if $(i,j)\in D_p\setminus D_p'$, then either $i=p+1$ or $j=w(p)+1$.
\end{lemma}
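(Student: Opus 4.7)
The plan is to argue by contradiction: suppose there exists $(i,j) \in D_p \setminus D_p'$ with $i \ge p+2$ and $j \ge w(p)+2$; I will exhibit a pattern from $\mathcal{P}$ in $w$. Set $u = w(p)$, $v = w(p+1)$, $r = w^{-1}(j)$, and $s = w^{-1}(u+1)$. The hypotheses give $v > u+1$, $s > p+1$, $w(i) > j$, and $r > i$; moreover $s \notin \{p, p+1, i, r\}$ is forced by distinctness (using $u+1 < j$). The proof proceeds by cases on the position of $s$ relative to $i$, and then on the relative order of $v$ with respect to $j$ and $w(i)$.

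When $s < i$, so $s \in [p+2, i-1]$, the positions $p < p+1 < s < i < r$ carry values $u, v, u+1, w(i), j$. The three possible orderings of $v$ relative to $j$ and $w(i)$ yield patterns $13254$ (if $v<j$), $14253$ (if $j<v<w(i)$), or $15243$ (if $v>w(i)$), all of which lie in $\mathcal{P}$.

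When $s > i$, I split on $v$. If $v<j$, positions $p, p+1, i, s, r$ (when $s<r$) or $p, p+1, i, r, s$ (when $s>r$) directly realize the patterns $13524$ or $13542$. If $v>j$, then $(p+1, j) \in D(w)$, and if every cell along the monotone path $(p+1, u+1), \dots, (p+1, j), (p+2, j), \dots, (i, j)$ lay in $D(w)$, then $(i,j)$ would be in $D_p'$, contradicting the hypothesis. Hence the path exits $D(w)$ somewhere, giving one of: (A) a position $k^* \le p-1$ with $w(k^*) = c^* \in [u+2, j-1]$ (arising from some $(p+1, c^*) \notin D(w)$), or (B) a position $a^* \in [p+2, i-1]$ with $w(a^*) = b^*$ satisfying $b^* \ne u+1$ and $b^* < j$ (arising from some $(a^*, j) \notin D(w)$). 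In case (A), positions $k^*, p, p+1, i, r$ give $21543$ when $v>w(i)$; when $v<w(i)$, adjoining $s$ gives $315624$ (if $i<s<r$) or $315642$ (if $s>r$). In case (B), if $b^* \in [u+2, j-1]$ then $p, p+1, a^*, i, r$ gives $14253$ or $15243$ (by the sign of $v - w(i)$); if $b^* < u$ then $p, a^*, i, r, s$ gives $21543$ when $s>r$, while $p, p+1, a^*, i, s, r$ gives $251634$ or $261534$ when $i<s<r$.

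Every configuration produces a pattern in $\mathcal{P}$, contradicting the hypothesis that $w$ avoids $\mathcal{P}$. The main obstacle is the breadth of the case analysis, since each sub-case requires a deliberate choice of positions; however, every pattern verification reduces to a short rank computation. The structural insight is that a 5-element subsequence suffices unless both $v>j$ and $s>i$ hold, in which case the topological assumption $(i,j) \notin D_p'$ is needed to extract an auxiliary position and produce a 6-element pattern from $\mathcal{P}$.
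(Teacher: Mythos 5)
Your proof is correct. I checked every pattern realization: the position sequences are strictly increasing and distinct (this requires the deductions $s\notin\{p,p+1,i,r\}$, $k^*\le p-1$, $a^*\in[p+2,i-1]$, all of which you justify), and the value comparisons $u<u+1<j<w(i)$, $v>u+1$, $c^*\in[u+2,j-1]$, $b^*\in[1,u-1]\cup[u+2,j-1]$ (you should perhaps note explicitly that $b^*\neq u$ follows from $a^*\neq p$, though it is implicit in your two subcases) give the stated patterns in every branch. The path-exit argument in the $s>i$, $v>j$ case is the only place the hypothesis $(i,j)\notin D_p'$ is used, and it is used correctly: the L-shaped monotone path lies in $D_p$, so if it were entirely in $D(w)$ it would connect $(i,j)$ to $(p+1,w(p)+1)$, contradicting the hypothesis.

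Your route is genuinely different from the paper's. The paper's proof introduces the extremal indices $a$ (largest row with $(a,w(p)+1)\in D_p'$) and $b$ (largest column with $(p+1,b)\in D_p'$), then organizes a large case analysis around where a northwest-most corner of a bad region sits relative to $a$ and $b$, with several subcases each finding a pattern. Your proof instead fixes any single deep cell $(i,j)\in D_p\setminus D_p'$, forms the auxiliary index $s=w^{-1}(w(p)+1)$, and branches first on $s$ versus $i$, then on $v=w(p+1)$ versus $j$; only in the residual case $s>i$, $v>j$ do you invoke the disconnection hypothesis, through a clean monotone-path argument whose failure produces the sixth value needed for a $6$-pattern. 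This organization is shorter and, I think, easier to audit, because the role of the topological hypothesis is isolated to one place; the trade-off is that it is less obviously tied to the geometry of $D_p'$ and $a,b$, which the paper reuses (via its Subcase structure) in later lemmas. Both proofs reduce in the end to the same library of pattern checks, as they must.
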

\begin{proof}
Let $a$ be the largest row index such that $(a,w(p)+1)\in D_p'$, and let $b$ be the largest column index such that $(p+1,b)\in D_p'$. We will use the following elementary observation. Since $(p,w(p))$
is an outer corner of $\dom(w)$ and $(p+1,w(p)+1)\in D(w)$,
the part of the component $D_p'$ meeting the first column
$w(p)+1$ contains the whole vertical segment
$[p+1,a]\times\{w(p)+1\},$
and the part meeting the first row contains the whole horizontal
segment
$\{p+1\}\times [w(p)+1,b].$
Moreover, for every $p+1\le r\le a$ and every $t<w(p)$, one has
$(r,t)\in D(w)$; the transpose statement holds for columns
$w(p)+1\le c\le b$.

    First suppose that there is a region of $D_p \setminus D_p'$ which has a northwest corner that is contained in the $(p+1)$st row, but the region is not contained in row $p+1$. Let $(p+1, j)$ be such a northwest corner with $j$ minimal. Then $(p+2,j)\in D_p$, so $w(p+2)>j$, and necessarily $(p+2,w(p)+1)\in D_p$. Notice also that, by the minimality of $j$,  for all $b<k<j$ we have $(p+1,k)\not\in D(w)$. Therefore, for all $b<j'<j$, we have $w^{-1}(j')<p$.
    \begin{itemize}
        \item If $w(p+1)>w(p+2)$, we have $w^{-1}(j-1)<p<p+1<p+2<w^{-1}(j)$ and $w(p)<j-1<j<w(p+2)<w(p+1)$, realizing the pattern 21543. 
        \item If $w(p+1)<w(p+2)$:
        \begin{itemize}
            \item If $w^{-1}(w(p)+1)<w^{-1}(j)$, we have $w^{-1}(j-1)<p<p+1<p+2<w^{-1}(w(p)+1)<w^{-1}(j)$ and $w(p)<w(p)+1<j-1<j<w(p+1)<w(p+2)$, so we realize the pattern
            315624.
            \item If $w^{-1}(w(p)+1)>w^{-1}(j)$, we have $w^{-1}(j-1)<p<p+1<p+2<w^{-1}(j)<w^{-1}(w(p)+1)$ and $w(p)<w(p)+1<j-1<j<w(p+1)<w(p+2)$, so we realize the pattern 315642.
        \end{itemize}
    \end{itemize}
    
    If there is a region of $D_p \setminus D_p'$ which has a northwest corner that is contained in the $(w(p) + 1)$st column, but the region is not contained in column $w(p) + 1$, then we are in the situation transposed from the previous case. We can therefore realize the patterns 21543, 251634, or 261534.

    Finally, suppose $(i,j)$ is a northwest corner of one of the regions of
$D_p\setminus D_p'$ with $i>p+1$ and $j>w(p)+1$, and there is no such northwest
corner $(i_0,j_0)\neq(i,j)$ with $i_0\le i$ and $j_0\le j$. We say $(i,j)$ is a northwesternmost northwest corner.
    Then $w(i-1)<j$, $w(i)>j$, $w^{-1}(j-1)<i$, and $w^{-1}(j)>i$. 
    There are the following cases:
   
         \underline{Case $i\le a$}. By assumption, for all $p+1\le i' \le a$ and $j'<w(p)$, we have $(i',j')\in D(w)$ and $(i',w(p)+1)\in D(w)$. Therefore, since $p+1\le i-1\le a$, we must have $w(i-1)>w(p)+1$. 
        Furthermore, we have $w^{-1}(w(p)+1)>a$.
        
        Then if $w^{-1}(j)<w^{-1}(w(p)+1)$, we have $p<i-1<i<w^{-1}(j)<w^{-1}(w(p)+1)$
        and $w(p)<w(p)+1<w(i-1)<j<w(i)$, realizing the pattern 13542. Otherwise  $p<i-1<i<w^{-1}(w(p)+1)<w^{-1}(j)$ and $w(p)<w(p)+1<w(i-1)<j<w(i)$, realizing the pattern 13524.

        \underline{Case $j\le b$.} This is transpose to the case $i\le a$, and we can realize patterns 15243 and 14253.

        \underline{Case $i>a+1$ and $j>b+1$.} (Notice that it is impossible that $i=a+1$ or $j=b+1$). There are a few subcases.
   
        \underline{Subcase 1:} $w(i)<w(p+1)$ and for all $b<j'<j$, we have $w^{-1}(j')<p$. It must be the case that $(p+1,j)\in D(w)$, so $w(p+1)>j$. We have $w^{-1}(j-1)<p<p+1<i<w^{-1}(j)$ and $w(p)<j-1<j<w(i)<w(p+1)$, realizing the pattern 21543.  

        \underline{Subcase 2:} $w^{-1}(w(p)+1)>w^{-1}(j)$, and for all $a<i'<i$, we have $w(i')<w(p)$. We are in the situation transpose to Subcase 1, and we can again realize 21543 with row indices $p<i-1<i<w^{-1}(j)<w^{-1}(w(p)+1)$.

        \underline{Subcase 3:} $w(i)>w(p+1)$; $w^{-1}(w(p)+1)<w^{-1}(j)$; for all $b<j'<j$, we have $w^{-1}(j')<p$; and for all $a<i'<i$ we have $w(i')<w(p)$. Then it must be the case that $(i,w(p)+1)\in D(w)$, so $w^{-1}(w(p)+1)>i$. We have $p<p+1<i-1<i<w^{-1}(w(p)+1)<w^{-1}(j)$ and $w(i-1)<w(p)<w(p)+1<j<w(p+1)<w(i)$, realizing the pattern 251634. 

        \underline{Subcase 4:} $w(i)>w(p+1)$; for all $b<j'<j$, we have $w^{-1}(j')<p$; and there exists $a<i'<i$ such that $w(i')>w(p)$. Since $(i,j)\in D(w)$ is a northwest corner, we must have $w(i')<j$. Again we have $(p+1,j)\in D(w)$. We have $p<p+1<i'<i<w^{-1}(j)$ and $w(p)<w(i')<j<w(p+1)<w(i)$, realizing the pattern 14253.

        Notice that Subcases 1--4 cover all the situations when, for all $b<j'<j$, we have $w^{-1}(j')<p$. By considering the transpose, we cover all the situations when, for all $a<i'<i$, we have $w(i')<w(p)$. We are left with one last case.

        \underline{Subcase 5:} There exists $b<j'<j$ such that $w^{-1}(j')>p$ and there exists $a<i'<i$ such that $w(i')>w(p)$. Since $(i,j)\in D(w)$ is a northwesternmost northwest corner, we must have $w(i')<j$ and $w^{-1}(j')<i$.

        First we assume that $i',j'$  also satisfy $w^{-1}(j')\le a $  and $w(i')\le b$. We then have $p<w^{-1}(j')<i'<i<w^{-1}(j)$ and $w(p)<w(i')<j'<j<w(i)$, realizing the pattern 13254.

        We then consider the case when for all $a<\tilde{i}<i$ such that $w(\tilde{i})>w(p)$, we also have $w(\tilde{i})>b$. It follows that $w^{-1}(w(p)+1)>i$, since $w^{-1}(w(p)+1)>a$.  If $w^{-1}(w(p)+1)<w^{-1}(j)$, we have $p<i'<i<w^{-1}(w(p)+1)<w^{-1}(j)$ and $w(p)<w(p)+1<w(i')<j<w(i)$, realizing the pattern 13524. If $w^{-1}(w(p)+1)>w^{-1}(j)$, we realize the pattern 13542.

        The case when, for all $b<\tilde{j}<j$ such that $w^{-1}(\tilde{j})>p$, we also have $w^{-1}(\tilde{j})>a$, is transpose to the previous case.
\end{proof}

\begin{lemma}
    \label{lem:high-rank-diagram}
    Suppose $w$ avoids $\mathcal{P}$, $(p,q)\in D(w)$ is the northwest corner of a non-dominant region, and either (a) $w(p-1)\neq q-1$, or (b) $q\ge 3$,  $w(p-1)=q-1$ and $w^{-1}(q-2)\le p-2$. Let $D_{p,q}:=\{(i,j)\in D(w): i\ge p, j\ge q\}$. Then $D_{p,q}$ is either entirely contained in row $p$ or entirely contained in column $q$. Furthermore, if $(s,t)\in D(w)$ and both $s<p$ and $t<q$, then $(s,t)\in \dom(w)$. 
\end{lemma}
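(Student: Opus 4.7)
The plan is to argue by contradiction: assume there is some $(i,j)\in D_{p,q}$ with $i>p$ and $j>q$, and extract a pattern from $\mathcal{P}$ to contradict the avoidance hypothesis. Set $m:=w^{-1}(q)$, $l:=w^{-1}(j)$, and in case~(a) also $k:=w^{-1}(q-1)$.

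First I would record the structural consequences of the hypotheses. Since $(p,q)\in D(w)$, we have $w(p)>q$ and $m>p$. Since $(p,q)$ is a northwest corner, $(p-1,q),(p,q-1)\notin D(w)$, which (combined with $(p,q)\in D(w)$) forces $w(p-1)<q$ and $w^{-1}(q-1)<p$. In case~(a) the hypothesis $w(p-1)\ne q-1$ strengthens these to $w(p-1)<q-1$ and $k<p-1$; in case~(b) we are given the two dominant pivots $w(p-2)=q-2$ and $w(p-1)=q-1$. From $(i,j)\in D(w)$, $w(i)>j$ and $l>i>p$. Moreover $m\ne i$ (since $w(i)>j>q=w(m)$) and $w(p)\ne j$ (since $w^{-1}(j)=l>p$), so $m$ lies in exactly one of $(p,i)$, $(i,l)$, or $(l,\infty)$, and exactly one of the following orderings holds (recalling $w(i)>j$): Case~I, $w(i)<w(p)$; Case~II, $w(p)<j$; Case~III, $j<w(p)<w(i)$.

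Next I would carry out the finite case analysis, extracting a pattern from $\mathcal{P}$ in each branch. In Case~I the five positions $k,p-1,p,i,l$ in case~(a), or $p-2,p-1,p,i,l$ in case~(b), realize the rank sequences $21543$ or $12543$. In Case~II the five positions $p-1,p,m,i,l$, $p-1,p,i,m,l$, or $p-1,p,i,l,m$, according as $m<i$, $i<m<l$, or $m>l$, realize $13254$, $13524$, or $13542$; these work uniformly in (a) and (b) because only $w(p-1)<q$ is used. In Case~III with $m<i$ the five positions $p-1,p,m,i,l$ realize $14253$; with $i<m<l$ the six positions $k,p-1,p,i,m,l$ (case~(a)) or $p-2,p-1,p,i,m,l$ (case~(b)) realize $215634$ or $125634$; with $m>l$ the five positions $k,p-1,i,l,m$ or $p-2,p-1,i,l,m$ realize $21543$ or $12543$.

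The main obstacle will be keeping the casework organized and verifying each rank computation. The bookkeeping is simplified by recording the chain $w(p-1)<q-1<q<j<w(i)$ in case~(a) (replacing $w(p-1)<q-1$ by $q-2<q-1$ in case~(b)) together with the case-defining position of $w(p)$ relative to $j$ and $w(i)$; the patterns then follow by direct normalization. The chosen positions are distinct in every subcase thanks to $k<p-1<p<i<l$ and $m>p$, $m\notin\{i,l\}$, so each gives a genuine pattern occurrence in $\mathcal{P}$, yielding the desired contradiction.
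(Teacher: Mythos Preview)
Your case analysis for the first half is correct and is organized somewhat differently from the paper (you split according to the relative order of $w(p)$, $j$, $w(i)$ and the position of $m=w^{-1}(q)$, whereas the paper splits according to whether $(p,j)$ and $(i,q)$ lie in $D(w)$), but the patterns you extract are exactly the same ones the paper finds.

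However, there is a genuine gap in the setup of your contradiction. The statement asserts that $D_{p,q}$ lies \emph{entirely} in row~$p$ or \emph{entirely} in column~$q$. The negation of this is not ``there exists $(i,j)\in D_{p,q}$ with $i>p$ and $j>q$''; it is ``there exist $(i_1,j_1),(i_2,j_2)\in D_{p,q}$ with $i_1>p$ and $j_2>q$''. Your argument only rules out the case where a single element witnesses both, i.e.\ where some box of $D_{p,q}$ lies strictly southeast of $(p,q)$. You have not excluded the L-shaped configuration in which $D_{p,q}$ contains both some $(i,q)$ with $i>p$ and some $(p,j)$ with $j>q$, but no box with both coordinates strictly larger. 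The paper treats this as a separate step (``It remains to show that there cannot simultaneously exist $(p,j)$ and $(i,q)$ in $D_{p,q}$ with $j>q$ and $i>p$''), again by extracting a forbidden pattern from $\mathcal{P}$ using the positions $w^{-1}(q-1),\,p-1,\,p$, an appropriate intermediate row, and $w^{-1}(q)$. You need to add this case to complete the proof.
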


\begin{proof}
    Since $(p,q)\in D(w)$ is a northwest corner of a non-dominant region, when (a) holds we have $w(p-1)<q-1$ and $w^{-1}(q-1)<p-1$. 
    
    First suppose $(i,j)\in D_{p,q}$ such that $i>p$, $j>q$. 

    \underline{Case 1}: Suppose $(p,j),(i,q)\in D_{p,q}$. Then we have $w(p)>j$, $w(i)>j$, $w^{-1}(q)>i$, and $w^{-1}(j)>i$. 
    
    If $w(p)<w(i)$ and $w^{-1}(q)<w^{-1}(j)$, then when (a) holds, we have $w^{-1}(q-1)<p-1<p<i<w^{-1}(q)<w^{-1}(j)$ and $w(p-1)<q-1<q<j<w(p)<w(i)$, realizing the pattern 215634. When (b) holds, we can realize the pattern 125634.

    If $w(p)>w(i)$, then when (a) holds, we have $w^{-1}(q-1)<p-1<p<i<w^{-1}(j)$ and $w(p-1)<q-1<j<w(i)<w(p)$, realizing the pattern 21543. When (b) holds, we can realize the pattern 12543. The case $w^{-1}(q)>w^{-1}(j)$ is similar.

    \underline{Case 2}: Suppose $(i,q)\not\in D_{p,q}$ and $(p,j)\not\in D_{p,q}$. We must have some $p<i'<i$ such that $w(i')=q$, and some $q<j'<j$ such that $w(p)=j'$. We have $p-1<p<i'<i<w^{-1}(j)$ and $w(p-1)<q<w(p)<j<w(i)$, realizing the pattern 13254. 

    \underline{Case 3}: Suppose $(i,q)\not\in D_{p,q}$ and $(p,j)\in D_{p,q}$. Then we must have some $p<i'<i$ such that $w(i')=q$, as well as $w(p)>j$. If $w(p)<w(i)$, we have $p-1<p<i'<i<w^{-1}(j)$ and $w(p-1)<q<j<w(p)<w(i)$, realizing the pattern 14253. If $w(p)>w(i)$, we realize the pattern 15243. 

    \underline{Case 4}: If $(i,q)\in D_{p,q}$ and $(p,j)\not\in D_{p,q}$, we are in the situation transpose to Case 3, realizing the pattern 13524 or 13542. 

    The above argument shows that we cannot have $(i,j)\in D_{p,q}$ such that $i>p$, $j>q$. It remains to show that there cannot simultaneously exist $(p,j)$ and $(i,q)$ in $D_{p,q}$ with $j>q$ and $i>p$.

    Suppose otherwise. We have $w(p)>j$, $w^{-1}(q)>i$. Since there are no boxes strictly to the southeast of $(p,q)$, we have $(i,j)\not\in D_{p,q}$. Therefore, either there exists some $p<i'\le i$ such that $w(i')=j$, or there exists some $q<j'\le j$ such that $w(i)=j'$. Suppose the former is true; the latter case is similar. When (a) holds, 
    we have $w^{-1}(q-1)<p-1<p<i'<w^{-1}(q)$ and $w(p-1)<q-1<q<j<w(p)$, realizing the pattern 21543. When (b) holds, we realize the pattern 12543.
    Finally, suppose there exists $(s,t)\in D(w)$ with $s<p$ and $t<q$ such that  $(s,t)\not\in \dom(w)$. Without loss of generality we may assume $(s,t)$ is the northwest corner of its region, and if $(s',t')\in D(w)$ is strictly northeast of $(s,t)$, then $(s',t')\in \dom(w)$. If $(s-1, t-1)$ is an outer corner of $\dom(w)$, then by Lemma \ref{lem:rank-1-diagram}, $(p,q)$ cannot be in $D(w)$, giving a contradiction. Otherwise, $(s,t)$ satisfies conditions (a) or (b) of this lemma, so the previous argument shows that $(p,q)$ cannot be in $D(w)$.
\end{proof}

\begin{lemma}
\label{lem:patternconverse}
    If $w$ contains a pattern $v\in\mathcal{P}$,
    then $w$ does not satisfy the properties in Lemma~\ref{lem:rank-1-diagram} or Lemma~\ref{lem:high-rank-diagram}.
\end{lemma}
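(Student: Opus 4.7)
The plan is to prove Lemma~\ref{lem:patternconverse} by a direct case analysis over the thirteen patterns in $\mathcal{P}$: for each $v \in \mathcal{P}$, assuming $w$ contains $v$, I would exhibit an explicit configuration in the Rothe diagram of $w$ that violates either the conclusion of Lemma~\ref{lem:rank-1-diagram} or the conclusion of Lemma~\ref{lem:high-rank-diagram}. The idea is to invert the case analyses in the forward direction: those proofs proceed by taking a hypothetical ``bad'' cell and producing a pattern, so the converse just reads the same cases backwards, taking the pattern to produce the bad cell.

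For the length-five patterns 12543, 13254, 13524, 14253, 13542, 15243, 21543, I would aim for a violation of Lemma~\ref{lem:rank-1-diagram}. Given an occurrence of the pattern at positions $i_1 < i_2 < i_3 < i_4 < i_5$, I would choose the occurrence making the northwest-most position $(i_1, w(i_1))$ as far northwest as possible; this ensures that $(i_1-1, j) \in \dom(w)$ for all $j < w(i_1)$ and symmetrically on the column side, so that the position in the pattern playing the role of $(p, w(p))$ in the proof (determined by the smallest-value entry that sits on the southeast boundary of $\dom(w)$) is genuinely an outer corner of $\dom(w)$. The remaining three positions of the pattern place cells $(p+1, w(p)+1) \in D(w)$ and $(i, j) \in D_p \setminus D_p'$ with $i > p+1$ and $j > w(p)+1$, exactly violating the conclusion of Lemma~\ref{lem:rank-1-diagram}.

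For the length-six patterns 125634, 215634, 315624, 251634, 315642, 261534, the witness instead violates Lemma~\ref{lem:high-rank-diagram}. Here the fifth position of the pattern (with value 3, contributing to a descent in both coordinates relative to the earlier entries with values 5 and 6) is used to locate a northwest corner $(p, q)$ of a non-dominant region. The distinction between the patterns starting with 1 (namely 125634), with 2 (namely 215634, 251634, 261534), and with 3 (namely 315624, 315642) corresponds precisely to the distinction between conditions (b) and (a) in the hypothesis of Lemma~\ref{lem:high-rank-diagram}, ensuring the hypothesis holds. The sixth position of the pattern then furnishes an element of $D_{p,q}$ strictly southeast of $(p,q)$, contradicting the conclusion.

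The main obstacle is bookkeeping, plus the more substantive issue of ensuring that the corner one reads off from the pattern is actually genuine in the ambient $w$ and not disrupted by other entries of $w$ lying between the pattern positions. Two mechanisms address this. First, choosing the occurrence of $v$ with $(i_1, w(i_1))$ lexicographically minimal forces the cells northwest of $(i_1, w(i_1))$ into $\dom(w)$. Second, choosing the ``corner'' positions of the pattern using minimality (e.g.\ taking the northwest-most northwest corner, as in the proof of Lemma~\ref{lem:rank-1-diagram}) handles interpolating entries: any entry of $w$ strictly between two pattern positions either already fits into the pattern structure or can be absorbed by shifting the choice of corner. Verifying this for each of the thirteen patterns is routine but tedious, and constitutes the bulk of the work.
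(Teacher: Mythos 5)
Your proposal takes a genuinely different route from the paper, and unfortunately has several concrete gaps.

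\textbf{What the paper does.} The paper's proof is much shorter and avoids any case analysis on the ambient permutation $w$. It makes two observations. First, by direct inspection, each pattern $v \in \mathcal{P}$ already violates the conditions of Lemma~\ref{lem:rank-1-diagram} or Lemma~\ref{lem:high-rank-diagram} \emph{in its own Rothe diagram $D(v)$}. Second, if $w$ contains $v$ at positions $k_1 < \dots < k_m$ with values $l_1 < \dots < l_m$, the map $\iota(i,j) = (k_i, l_j)$ satisfies $(i,j) \in D(v) \iff \iota(i,j) \in D(w)$ and $r_{k_i \times l_j}(w) \ge r_{i\times j}(v)$. A violation in $D(v)$ then transfers to a violation in $D(w)$, with the caveat that a rank-$1$ violation in $D(v)$ may become a higher-rank one in $D(w)$ (so a Lemma~\ref{lem:rank-1-diagram} violation may turn into a Lemma~\ref{lem:high-rank-diagram} violation). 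This is entirely local to the pattern cells; one never has to locate a corner of $\dom(w)$ directly.

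\textbf{Gaps in your proposal.} Your classification ``length-$5$ patterns violate Lemma~\ref{lem:rank-1-diagram}; length-$6$ patterns violate Lemma~\ref{lem:high-rank-diagram}'' is not correct. For $v = 12543$ one computes $D(v) = \{(3,3),(3,4),(4,3)\}$, a single non-dominant rank-$2$ region, and the only outer corner of $\dom(v) = \emptyset$ is $(1,1)$ with $(2,2) \notin D(v)$; so $12543$ violates only Lemma~\ref{lem:high-rank-diagram}, not Lemma~\ref{lem:rank-1-diagram}. The same is true of $21543$. Conversely, $261534$ has $\dom(v) = \{(1,1),(2,1)\}$ with outer corner $(1,2)$ and $(2,3) \in D(v)$, and the cell $(4,4) \in D_p\setminus D_p'$ with both $i>p+1$ and $j>w(p)+1$; so $261534$ violates Lemma~\ref{lem:rank-1-diagram}, not Lemma~\ref{lem:high-rank-diagram}. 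So the dichotomy you organize your case analysis around is false in both directions.

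More seriously, your key mechanism --- ``choosing the occurrence with $(i_1, w(i_1))$ lexicographically minimal forces the cells northwest of $(i_1, w(i_1))$ into $\dom(w)$'' --- is false. Take $w = 132654$, which contains $21543$ at positions $2,3,4,5,6$ and nowhere with $i_1 = 1$ (since $w(1) = 1$ is the global minimum value and cannot play the role of the pattern value $2$). But $w(1)=1$ forces $\dom(w) = \emptyset$, so no cell at all lies in $\dom(w)$. The violating corner in $w$ here is $(1,1)$ (vacuously an outer corner of the empty dominant part, with $(2,2) \in D(w)$), which is not a pattern position. This illustrates the structural problem with ``inverting'' the forward case analysis: the outer corner or northwest corner witnessing the violation in $D(w)$ need not lie at a pattern position, and the rank and connectivity of regions in $D(w)$ depend on entries of $w$ outside the pattern. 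Your proposal acknowledges this as ``the more substantive issue'' but does not actually resolve it; the one tool you offer (lexicographic minimality) does not do the job. The paper's embedding argument sidesteps this entirely by never trying to realize the violating corner at a pattern position.
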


\begin{proof}
    By inspection, if $w\in \mathcal{P}$, then $D(w)$ violates
    the properties in Lemma~\ref{lem:rank-1-diagram}
    or Lemma~\ref{lem:high-rank-diagram}. 

    Let $v\in\mathcal{P}$,  so $v\in S_m$ ($m=5$ or 6), and let $w\in S_n$.
    Suppose we have  $k_1<\dots <k_m$, and the relative
    order of $w(k_1),\dots, w(k_m)$ is given by $v$. 
    Suppose $\{l_1,\dots l_m\}=\{w(k_1),\dots w(k_m)\}$ and $l_1<\dots <l_m$.
    Let $\iota \colon [m]^2\to [n]^2$ be given by
    $\iota((i,j)):=(k_i,l_j)$. Then 
    $(i,j)$ lies in $ D(v)$ if and only if $\iota((i,j))$ lies in $ D(w)$, and $r_{k_i\times l_j}(w)\ge r_{i\times j}(v)$.
    If $D(v)$ violates the conditions in Lemma~\ref{lem:rank-1-diagram}, then $D(w)$ violates the conditions in 
    Lemma~\ref{lem:rank-1-diagram} if the rank 1 region stays
    rank 1, and $D(w)$ violates the conditions in Lemma~\ref{lem:high-rank-diagram} if the rank 1 region increases rank. If $D(v)$ violates the conditions
    in Lemma~\ref{lem:high-rank-diagram}, then $D(w)$  also does.
\end{proof}

In summary, Lemmas~\ref{lem:rank-1-diagram}, \ref{lem:high-rank-diagram}, and \ref{lem:patternconverse} give the following.

\begin{theorem}
    \label{thm:characterization}
    A permutation $w$ avoids $\mathcal{P}$ if and only if its
    diagram $D(w)$ satisfies the
    conditions in Lemmas~\ref{lem:rank-1-diagram} and \ref{lem:high-rank-diagram}.
\end{theorem}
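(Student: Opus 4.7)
The plan is to observe that Theorem~\ref{thm:characterization} is a direct packaging of the three preceding lemmas, and the proof essentially amounts to taking their conjunction (together with the contrapositive of Lemma~\ref{lem:patternconverse}).

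First I would establish the forward direction. Suppose $w$ avoids $\mathcal{P}$. Then Lemma~\ref{lem:rank-1-diagram} directly asserts the required property for every outer corner of $\dom(w)$ together with a square at the diagonal step $(p+1, w(p)+1)$, and Lemma~\ref{lem:high-rank-diagram} directly asserts the required property for every northwest corner of a non-dominant region satisfying the hypothesis (a) or (b). So $D(w)$ satisfies both structural conditions automatically.

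For the reverse direction I would argue by contrapositive, which is precisely the content of Lemma~\ref{lem:patternconverse}. If $w$ contains some pattern $v \in \mathcal{P}$, then since the embedding $\iota \colon [m]^2 \hookrightarrow [n]^2$ used in the proof of Lemma~\ref{lem:patternconverse} sends $D(v)$ into $D(w)$ and can only increase ranks of regions, any violation of the conditions in Lemmas~\ref{lem:rank-1-diagram} and \ref{lem:high-rank-diagram} exhibited by $v$ is transported to a violation for $w$ (with the case distinction being whether the offending rank-$1$ region of $v$ remains rank $1$ in $w$ or acquires higher rank, which is exactly the case split already made in Lemma~\ref{lem:patternconverse}). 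Thus the two structural conditions force avoidance of $\mathcal{P}$.

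Since both directions have already been isolated and verified in the three lemmas, no new argument is needed, and there is no real obstacle; the only work is to write a short proof that combines them. I would therefore present the proof as a two-sentence argument: the forward implication is Lemmas~\ref{lem:rank-1-diagram} and \ref{lem:high-rank-diagram}, and the reverse implication is the contrapositive of Lemma~\ref{lem:patternconverse}.
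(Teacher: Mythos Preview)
Your proposal is correct and matches the paper's approach exactly: the paper simply states that the theorem follows from Lemmas~\ref{lem:rank-1-diagram}, \ref{lem:high-rank-diagram}, and \ref{lem:patternconverse}, without even writing a formal proof. Your identification of the forward direction as the content of the first two lemmas and the reverse direction as the contrapositive of Lemma~\ref{lem:patternconverse} is precisely the intended argument.
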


By considering how the various operations on a permutation in Section~\ref{sec:geom} change the
Rothe diagram, we have the following corollary.
\begin{corollary}
\label{cor:del-preserve-p}
    Suppose $w$ is a permutation that avoids $\mathcal{P}$ and $w'$ is a permutation obtained
    from $w$ by applying one of $\del_k$, $\del_{r,*}$, $\del_{*,c}$, or
    transition as in Proposition~\ref{prop:transition}.
    Then $w'$ avoids $\mathcal{P}$.
\end{corollary}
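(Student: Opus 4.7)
The plan is to invoke Theorem~\ref{thm:characterization}: $w$ avoids $\mathcal{P}$ if and only if $D(w)$ satisfies the diagrammatic conditions of Lemmas~\ref{lem:rank-1-diagram} and \ref{lem:high-rank-diagram}. Equivalently, by Lemma~\ref{lem:patternconverse}, containing a pattern from $\mathcal{P}$ is the same as having $D(w)$ violate one of those conditions. It therefore suffices, operation by operation, to show that if $D(w')$ exhibits such a violation then so does $D(w)$.

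For $\del_k$ the statement is immediate: $\del_k(w)$ is literally the pattern of $w$ obtained by omitting the $k$-th letter, so the usual heredity of pattern containment produces the required realization of any $\mathcal{P}$-pattern of $\del_k(w)$ in $w$. For the transition $w' = wt_{r,w^{-1}(c)}$ of Proposition~\ref{prop:transition}, the listed properties give $D(w') = D(w)\setminus\{(r,c)\}$ with ranks of the remaining cells unchanged, and the essential-southeast-corner hypothesis on $(r,c)$ guarantees that nothing of $D(w)$ lies strictly southeast of $(r,c)$; any bad configuration in $D(w')$ therefore persists in $D(w)$, since removing a single cell with no box of the diagram strictly to its southeast cannot destroy a pre-existing outer dominant corner, northwest corner, or region boundary that would witness a violation.

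The substantive case is $\del_{*,c}$ (and symmetrically $\del_{r,*}$). By construction, $D(\del_{*,c}(w))$ is obtained from $D(w)$ by deleting column $c$ (and, in the solid case, one extra cell) and sliding columns indexed $>c$ one step to the left. I would argue by contrapositive: given a bad configuration from Lemma~\ref{lem:rank-1-diagram} or Lemma~\ref{lem:high-rank-diagram} in $D(\del_{*,c}(w))$, I lift the row and column indices back to $w$ by shifting column indices $\geq c$ up by one, and verify that this lifted configuration witnesses the same violation in $D(w)$. The key inputs are the special-column hypothesis, which guarantees that no cell of $D(w)$ lies strictly southeast of the bottom cell of $D(w)_{*,c}$ so that the outer-corner, northwest-corner and region-connectivity data transfer faithfully across the shift, and the equality $r_{i\times j}(w) = r_{i\times \iota_{*,c}(j)}(\del_{*,c}(w))$ of ranks, which ensures that the dominant versus non-dominant classification of the lifted corner matches the classification inherited from $D(\del_{*,c}(w))$.

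The main obstacle will be the solid-column sub-case, in which an additional cell outside column $c$ is also removed from the diagram. One must verify that this extra deletion cannot transform a bad configuration in $D(w)$ into an innocuous configuration in $D(\del_{*,c}(w))$; this reduces to checking that the extra deleted cell lies in the unique rank-$1$ cell $D(w)_{*,c+k}\setminus\dom(w)$ adjacent to the dominant part, where it cannot serve as the southeast witness required by either lemma.
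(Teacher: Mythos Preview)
Your approach is essentially the same as the paper's: both reduce the claim to Theorem~\ref{thm:characterization} and check, operation by operation, that the diagrammatic conditions of Lemmas~\ref{lem:rank-1-diagram} and~\ref{lem:high-rank-diagram} cannot be violated after the operation unless they were already violated before. The paper's own justification is in fact a single sentence (``by considering how the various operations on a permutation in Section~\ref{sec:geom} change the Rothe diagram''), so your sketch is already more detailed than what appears in print.

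Two small remarks. First, your treatment of $\del_k$ via heredity of pattern containment is cleaner than going through the diagram and is entirely sufficient on its own. Second, your assertion that properties (1)--(3) after Proposition~\ref{prop:transition} ``give $D(w') = D(w)\setminus\{(r,c)\}$'' is correct as a fact about transition at a southeast-most essential box, but those three properties as literally stated do not quite force $D(w')\subseteq D(w)$; one still needs the elementary direct check that swapping positions $r$ and $w^{-1}(c)$ removes exactly the cell $(r,c)$ from the diagram. Similarly, in the $\del_{*,c}$ case your lifting argument is the right idea, but when the lifted northwest corner lands in column $c+1$ it may acquire a left neighbour from column $c$ of $D(w)$, so the northwest corner of the containing region in $D(w)$ can shift; you would need to trace the violation to that shifted corner. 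These are genuine details to fill in, but the paper leaves them implicit as well.
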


\begin{proposition}
\label{prop:cotrans-preserves-P}
    Suppose $w$ avoids $\mathcal{P}$, and let $w'\in L_k(w)$ be
    a term obtained by cotransitioning at $k$.
    Then $w'$ also avoids $\mathcal{P}$.
\end{proposition}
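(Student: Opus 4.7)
I will argue by contrapositive: suppose $w'$ contains some pattern $v\in\mathcal{P}$ at positions $i_1<\cdots<i_m$, and produce a pattern from $\mathcal{P}$ in $w$. Since $w'$ and $w$ differ only by transposing the values at positions $k$ and $b$, if $\{k,b\}$ is disjoint from $\{i_1,\ldots,i_m\}$ the pattern positions carry the same values in $w$ as in $w'$ and $w$ contains $v$. So I may assume $\{k,b\}\cap\{i_1,\ldots,i_m\}\ne\emptyset$.

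Set $P'=\{i_1,\ldots,i_m\}\cup\{k,b\}$, and let $u'$, $u$ denote the patterns of $w'$, $w$ on $P'$. Then $u$ is obtained from $u'$ by swapping the slots of $P'$ indexed by $k$ and $b$, and the no-intermediate hypothesis on $(k,b)$ for $w$ implies that $u'\gtrdot u$ is a Bruhat cover. If no pattern value $w(i_\ell)$ lies strictly between $c=w(k)$ and $c'=w(b)$, then $c$ and $c'$ carry the same rank in the value set of the pattern and $u$ still contains $v$; so I may further assume some pattern value $w(i_\ell)\in(c,c')$, in which case the no-intermediate condition forces $i_\ell<k$ or $i_\ell>b$.

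From here the argument proceeds by case analysis on $v\in\mathcal{P}$ and on the locations of $k,b$ within $P'$. The involution $w\leftrightarrow w^{-1}$ preserves $\mathcal{P}$ and exchanges the row and column cotransition setups, cutting the thirteen patterns down to nine equivalence classes. In the bulk of the subcases, the length-$(m+1)$ or length-$(m+2)$ permutation $u$ already contains some element of $\mathcal{P}$, verified by a short tabulated check against the list in Proposition~\ref{prop:patterncriterion}. The residual subcases are those in which $u$ itself avoids $\mathcal{P}$---most delicately, certain length-five covers of $21543$, $13542$, and some of their length-six analogues. Here I would invoke the cotransition-at-$k$ hypothesis: for every $a<k$ with $w(a)<c$ there is a position $j^\star\in(a,k)$ with $w(a)<w(j^\star)<c$. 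Applying this to each pattern position $a=i_\ell<k$ with $w(i_\ell)<c$ yields auxiliary positions $j^\star$ whose values lie in explicitly constrained intervals; adjoining them to $P'$ produces a longer pattern in $w$ which, by direct computation, contains some element of $\mathcal{P}$.

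The main obstacle will be the bookkeeping required by the case analysis: for each equivalence class one must track how $k$, $b$, the intermediate pattern slots, and the auxiliary positions provided by cotransition all interact. The cotransition-at-$k$ hypothesis is essential in the residual cases---without it, configurations such as a copy of $21453$ or $21534$ in $w$ (each of which avoids $\mathcal{P}$) could leave no $\mathcal{P}$-pattern in $w$; the cotransition condition is precisely what forces the presence of the auxiliary positions $j^\star$ that, together with the pattern, produce a $\mathcal{P}$-pattern.
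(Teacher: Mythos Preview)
Your proposal is a sketch rather than a proof: you outline a pattern-level case analysis but do not actually carry it out. Phrases like ``verified by a short tabulated check,'' ``by direct computation,'' and ``the main obstacle will be the bookkeeping'' signal that the substantive work is deferred. With thirteen patterns (nine after the inverse symmetry) and, for each, multiple placements of $k$ and $b$ together with the auxiliary positions $j^\star$, the total number of subcases is large, and you have not verified any of them. The approach may be workable in principle, but as written there is no proof here --- only a plan.

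The paper's argument is entirely different and avoids all case analysis. It uses the equivalence of Proposition~\ref{prop:patterncriterion}: $w$ avoids $\mathcal{P}$ if and only if both $\mathfrak{S}_w(x)$ and $\mathfrak{S}_{w^{-1}}(x)$ have all coefficients equal to $0$ or $1$. Knutson's cotransition formula gives
\[
x_k\,\mathfrak{S}_w(x)=\sum_{w'\in L_k(w)}\mathfrak{S}_{w'}(x),
\]
and since Schubert polynomials have nonnegative coefficients, each summand on the right inherits the $0/1$ coefficient property from the left. For the inverse side one uses the double-Schubert cotransition identity $(x_k-y_{w(k)})\mathfrak{S}_w(x,y)=\sum_{w'\in L_k(w)}\mathfrak{S}_{w'}(x,y)$ together with $\mathfrak{S}_w(x,y)=\mathfrak{S}_{w^{-1}}(-y,-x)$ and $\mathfrak{S}_w(x)=\mathfrak{S}_w(x,0)$ to conclude that each $\mathfrak{S}_{{w'}^{-1}}(x)$ also has $0/1$ coefficients. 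This is a few lines long and sidesteps all pattern bookkeeping; the positivity of the cotransition expansion does the work that your case analysis would have to do by hand.
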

\begin{proof}
    By Proposition~\ref{prop:patterncriterion},
    $\mathfrak{S}_w(x)$ and $\mathfrak{S}_{w^{-1}}(x)$ have all coefficients equal to 0 or 1.
    Since $x_k\mathfrak{S}_w(x)=\sum_{w'\in L_k(w)}\mathfrak{S}_{w'}(x)$, each $\mathfrak{S}_{w'}(x)$
    must have all coefficients equal to 0 or 1.
    From \cite{KnutsonCotransition}, we also have
    $(x_{k}-y_{w(k)})\mathfrak{S}_{w}(x,y)=\sum_{w'\in L_k(w)}\mathfrak{S}_{w'}(x,y)$ for double
    Schubert polynomials. Since
    $\mathfrak{S}_w(x)=\mathfrak{S}_{w}(x,0)$
    and $\mathfrak{S}_{w}(x,y)=\mathfrak{S}_{w^{-1}}(-y,-x)$ (see e.g., \cite[Section 11.4]{AndersonFulton}), it follows that $\mathfrak{S}_{{(w')}^{-1}}(x)$
    also has all coefficients $0$ or $1$ for $w'\in L_k(w)$.
\end{proof}

\subsection{Relations in $\mathcal{R}_w$}
Equipped with the characterization of 
the Rothe diagrams of $w$, we state some (mostly straightforward) properties of
relations in $\mathcal{R}_w$ as technical preparation for the next section.

Let $f\in \mathcal{R}_w$, and let $\operatorname{cdg}(f)$
be the set of CDG generators merged to get $f$.  (This may not be unique 
in general; we may take the ones picked by
any specific implementation of the algorithm.)
Let $\mathfrak{m}(f):=\{(i,j):\exists g_1\neq g_2\in \operatorname{cdg}(f),  \text{ such that }(i,j)\in \operatorname{spr}(g_1)\cap \operatorname{spr}(g_2)\}$, the set of coordinates at which merging occurred. By construction, we have
$\operatorname{spr}(f)=(\bigcup_{g\in \operatorname{cdg}(f)}\operatorname{spr}(g)) \setminus \mathfrak{m}(f)$.

\begin{lemma}
    \label{lem:nosingleton}
    Let $f\in\mathcal{R}_w$ of degree at least 2. If $\operatorname{spr}(f)_{*,c}\neq \emptyset$,
    then
    $|\operatorname{spr}(f)_{*,c}|>1$.
\end{lemma}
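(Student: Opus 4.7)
The plan is to proceed by induction on the number of merge steps used to construct $f$. For the base case, $f \in \mathrm{CDG}(w)$ is a CDG generator; I would focus on the nontrivial case where $f$ arises from a Fulton minor of size $k \ge 2$. Such $f$ is, by construction, the determinant of a $k \times k$ submatrix of $A$ in which entries $a_{i,j}$ with $(i,j) \in \dom(w)$ have been set to $0$, and $\operatorname{spr}(f)$ consists precisely of the surviving variable positions. If $|\operatorname{spr}(f)_{*,c}| = 1$ for some column $c$ in the submatrix, Laplace expansion of the defining matrix along column $c$ factors $f$ as $\pm a_{r,c}$ times a smaller determinant. This contradicts the non-redundancy built into $\mathrm{CDG}(w)$, which explicitly excludes factorable specializations.

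For the inductive step, write $f = \mathrm{merge}_{(\alpha,\beta)}(f_1, f_2)$ with $f_1 \in \mathcal{R}_w$ and $f_2 \in \mathrm{CDG}(w)$, and assume the claim for $f_1$ and $f_2$. The key structural ingredient to invoke is the assertion (noted after the worked example of $w=426153$) that every element of $\mathcal{R}_w$ can be presented as the determinant of a sparse matrix whose nonzero entries are exactly on the spread; in particular, the merge introduces no extra cancellations beyond eliminating the pivot, and so
\[
\operatorname{spr}(f) = (\operatorname{spr}(f_1) \cup \operatorname{spr}(f_2)) \setminus \{(\alpha,\beta)\}.
\]

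Given this spread description, I would finish by a column-by-column check. For $c \ne \beta$, the merge condition $\mathrm{col}(\operatorname{spr}(f_1)) \cap \mathrm{col}(\operatorname{spr}(f_2)) = \{\beta\}$ forces at most one of $\operatorname{spr}(f_i)_{*,c}$ to be nonempty, so $\operatorname{spr}(f)_{*,c}$ coincides with that one (when nonempty) and has size $\ge 2$ by the inductive hypothesis. For $c = \beta$, the conditions $\operatorname{spr}(f_1) \cap \operatorname{spr}(f_2) = \{(\alpha,\beta)\}$ together with $\mathrm{row}(\operatorname{spr}(f_1)) \cap \mathrm{row}(\operatorname{spr}(f_2)) = \{\alpha\}$ force $\operatorname{spr}(f_1)_{*,\beta} \cap \operatorname{spr}(f_2)_{*,\beta} = \{(\alpha,\beta)\}$, and hence
\[
|\operatorname{spr}(f)_{*,\beta}| = |\operatorname{spr}(f_1)_{*,\beta}| + |\operatorname{spr}(f_2)_{*,\beta}| - 2 \ge 2,
\]
since both terms on the right are $\ge 2$ by induction (each contains the pivot and is thus nonempty).

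The main obstacle will be carefully nailing down the determinantal presentation of merged relations to guarantee that no unexpected cancellations occur, i.e., securing the equality $\operatorname{spr}(f) = (\operatorname{spr}(f_1) \cup \operatorname{spr}(f_2)) \setminus \{(\alpha,\beta)\}$ beyond pure bookkeeping. Under the rigid merge conditions (a unique shared coordinate, shared row, and shared column), I expect this to follow from a direct block-sparse Laplace expansion identity that stitches the two matrices underlying $f_1$ and $f_2$ together along the pivot position.
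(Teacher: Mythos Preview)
Your proposal is correct and follows essentially the same route as the paper: induction on the number of merge steps, with the base case handled by the definition of CDG generators (your Laplace-expansion argument spells out what the paper leaves as ``clear by definition''), and the inductive step split into the cases $c\neq\beta$ and $c=\beta$ exactly as in the paper. The spread identity $\operatorname{spr}(f)=(\operatorname{spr}(f_1)\cup\operatorname{spr}(f_2))\setminus\{(\alpha,\beta)\}$ that you flag as the main obstacle is precisely the formula the paper records just before the lemma (``By construction, $\operatorname{spr}(f)=(\bigcup_{g\in\operatorname{cdg}(f)}\operatorname{spr}(g))\setminus\mathfrak m(f)$''), so you may simply cite it rather than rederive it from the determinantal presentation; the disjointness of variables away from the pivot, forced by the row/column conditions on the merge, is what makes cancellations harmless.
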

\begin{proof}
    We induct on the number of CDG generators
    merged to construct $f$.
    If $f\in\mathrm{CDG}(w)$, this is clear
    by definition of the CDG generators (Section~\ref{sec:intro}).
    Suppose $f=\mathrm{merge}_{(\alpha,\beta)}(g,h)$.
    If $\beta\neq c$, then $\operatorname{spr}(f)_{*,c}=\operatorname{spr}(g)_{*,c}$ or
    $\operatorname{spr}(f)_{*,c}=\operatorname{spr}(h)_{*,c}$ and
    the claim follows by induction.
    If $\beta=c$, then $\operatorname{spr}(f)_{*,c}=\operatorname{spr}(g)_{*,c}\cup \operatorname{spr}(h)_{*,c}\setminus \{(\alpha,c)\}$. Since $\operatorname{spr}(g)_{*,c}\cap \operatorname{spr}(h)_{*,c}=\{(\alpha, c)\}$, the claim follows.
\end{proof}

\begin{lemma}
\label{lem:cotrans-column-ok}
    Suppose that $w$ avoids $\mathcal{P}$, and that
    \begin{enumerate}
        \item $(r,c)\in \mathrm{ess}(w)$ has rank $m$;
        \item there is $r'<r$ such that $w(r')=c-1$ and 
    $(r'+1,c)\in D(w)$ is rank 1;
    \item there is $c'<c$ such that $w^{-1}(c')=r-1$ and $(r,c'+1)\in D(w)$;
    \item for $i\in [r-(m-1),r-1]$, $w(i)<w(r')$.
    \end{enumerate}
    
    If $f\in\mathcal{R}_w$
    has some $g\in \operatorname{cdg}(f)$ such that $g$ is 
    obtained from an $(m+1)$-minor 
    and $\operatorname{spr}(g)\subset [r]\times [c]$, then
    $\operatorname{spr}(f)_{[r-(m-1),r],[c-1,c]}=([r-(m-1),r]\times[c-1,c])$. (See Figure~\ref{fig:caseII1} for an illustration, where $(r,c)$ here is $(r_2,c_2)$ in the figure.) 
\end{lemma}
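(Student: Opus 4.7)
The plan is to combine a structural analysis of CDG generators from $(m+1)$-minors supported in $[r]\times[c]$ with an induction on the merging steps producing $f \in \mathcal{R}_w$. The hypotheses carve out a very rigid picture of $D(w)$ around $(r,c)$, and the idea is to show that this rigidity leaves essentially no freedom in how merges can be performed, forcing the block $[r-m-1,r]\times[c-1,c]$ to be fully covered.

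First, I would pin down the row and column sets of $g$. Since $(r,c) \in \operatorname{ess}(w)$ has rank $m$, the permutation entries in $[r]\times[c]$ consist of $(r', c-1)$ together with $m-1$ entries $(i, w(i))$ with $i \in [r-m+1, r-1]$ and $w(i) < c-1$. The CDG reduction zeroes out the dominant portion of $D(w)\cap [r]\times[c]$, which is large. For the $(m+1)$-minor to remain non-redundant (not factor as a product of smaller minors in any allowable expansion), its rows must include $r$ and enough of the rows $[r-m+1, r-1]$, and its columns must include $c$ and $c-1$. Using the hypothesis $w(i) < w(r') = c-1$ on $[r-m+1, r-1]$ together with the existence of $r'$ and $c'$, I would argue that the only admissible row choice is (up to boundary flexibility) $\{r-m+1, \dots, r\}$ possibly together with $r'$, and the only admissible column choice includes both $c-1$ and $c$. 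Consequently $\operatorname{spr}(g)$ already contains $[r-m+1, r]\times\{c-1, c\}$.

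Second, I would address the remaining positions in $[r-m-1, r]\times[c-1, c]$, namely the two cells in row $r-m-1$ and (when $r'$ is not already in $[r-m-1, r]$) possibly the cells in row $r'$. The hypotheses $(r'+1, c)\in D(w)$ and $(r, c'+1) \in D(w)$ provide additional essential boxes (or at least additional rank conditions) adjacent to $(r, c)$, and these furnish extra CDG generators whose spreads extend up column $c-1$ and column $c$ past row $r-m+1$. For $f$ to actually equal a merge of $g$ with something producing a relation in $\mathcal{R}_w$ (as opposed to a trivial or redundant relation), these auxiliary generators must be among $\operatorname{cdg}(f)$. The row/column intersection rule in the definition of $\mathcal{R}_w$ restricts merges to a single shared position, and Lemma~\ref{lem:nosingleton} prevents any column of $\operatorname{spr}(f)$ from containing a single entry. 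Together these rule out the possibility that a merge strips positions out of the two columns $c-1$ or $c$ within rows $[r-m-1, r]$; the net effect is that every admissible merge either preserves or enlarges $\operatorname{spr}(f)\cap([r-m-1, r]\times[c-1,c])$.

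The main obstacle is controlling the case analysis of possible merge sequences and showing that \emph{every} sequence of merges from $\operatorname{cdg}(f)$ producing an element of $\mathcal{R}_w$ containing $g$ forces the full $(m+2)\times 2$ block into $\operatorname{spr}(f)$. I would handle this by induction on the number of CDG generators merged into $f$, tracking the intersection of $\operatorname{spr}(f)$ with the block at each step. The rigidity coming from rank $m$ at $(r,c)$, the ``frame'' conditions $(r'+1, c), (r, c'+1) \in D(w)$, and the row-ordering hypothesis on $w(i)$ should suffice to exclude any merge that would leave a hole; verifying this exclusion in every configuration of rows and columns the intermediate merged generators can use is where most of the combinatorial bookkeeping lies.
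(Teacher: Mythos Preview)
Your proposal misidentifies the mechanism. The paper's proof is three sentences and rests on two observations you do not make.

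First, the entire block already lies in $\operatorname{spr}(g)$. You argue that $\operatorname{spr}(g)$ covers only $[r-m+1,r]\times\{c-1,c\}$ and that the remaining rows must be supplied by \emph{other} generators in $\operatorname{cdg}(f)$. But nothing in the hypothesis forces $\operatorname{cdg}(f)$ to contain anything besides $g$: the case $f=g$ is allowed, and then your ``auxiliary generators'' do not exist. The paper's argument is that an $(m+1)$-minor uses $m+1$ rows, and the hypothesis $w(i)<w(r')=c-1$ for $i\in[r-(m-1),r-1]$ together with the CDG reduction forces $g$ to use at most one row above $r-(m-1)$; hence all $m+1$ rows of $g$ are determined, and similarly the column analysis forces both $c-1$ and $c$. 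So $\operatorname{spr}(g)$ already contains the full block stated in the lemma.

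Second, once the block sits inside $\operatorname{spr}(g)$, the only thing to check is that merging never deletes a position from it. Since $\operatorname{spr}(f)=\bigl(\bigcup_{h\in\operatorname{cdg}(f)}\operatorname{spr}(h)\bigr)\setminus\mathfrak{m}(f)$, it suffices that no coordinate in the block lies in $\mathfrak{m}(f)$; this is immediate from the single-row/single-column intersection requirement in the definition of $\mathrm{merge}$, because any CDG generator whose spread meets this block shares more than one row or column with $g$. Your route via Lemma~\ref{lem:nosingleton} and an induction on merge steps is both unnecessary and, as written, does not close the gap left by the $f=g$ case.
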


\begin{proof}
    By the definition of the CDG generators,
    $g$ cannot use more than one row above row $r-(m-1)$, and it also must use both
    columns $c$ and $c-1$. Since it is an $(m+1)$-minor, $[r-(m-1),r]\times[c-1,c]\subset \operatorname{spr}(g)$. Since none of the coordinates in $[r-(m-1),r]\times[c-1,c]$
    can be in $\mathfrak{m}(f)$, the result follows.
\end{proof}

\section{Proof of Theorem~\ref{thm:main}}\label{sec:proof}

We first outline the strategy that we use to prove Theorem~\ref{thm:main}. Given $w \in S_n$ and a subset $U$ of $[n]^2$, we say that $U$ is \emph{$X_w$-completable} if $\pi_U(\overline{X}_w) = (\mathbb{P}^1)^U$, i.e., if we fill in the entries corresponding to $U$ in an $n \times n$ matrix with generic complex numbers, then we can fill in the entries in $[n]^2 \setminus U$ in some way so that the resulting matrix lies in $X_w$. Combining Lemma~\ref{lem:spreadprojection} and Proposition~\ref{prop:spreadmultidegree} gives that $U$ is $X_w$-completable if and only if there is $T \subset U^c$ with $|T| = \ell(w)$ such that $\llbracket z^T \rrbracket \mathfrak{F}_{w}(z) \not=0$. If $V$ is $X_w$-completable and $U \subset V$, then $U$ is $X_w$-completable. 

We say that $U$ is \emph{$\mathcal{R}_w$-avoiding} if $U$ does not contain the spread of any $f \in \mathcal{R}_w$. If $V$ is $\mathcal{R}_w$-avoiding and $U \subset V$, then $U$ is $\mathcal{R}_w$-avoiding. 
By Lemma~\ref{lem:spreadprojection}, if $U$ is $X_w$-completable, then it is $\mathcal{R}_w$-avoiding. In general, the converse does not hold. 

\begin{example}\label{ex:21543}
Let $w = 21543$. A Macaulay2 computation \cite{M2} shows that
\begin{equation*}\begin{split}
f = a_{1,2}a_{1,4}a_{2,1}a_{3,1}a_{4,3} - a_{1,3}a_{1,4}a_{2,1}a_{3,1}a_{4,2} + a_{1,2}a_{1,3}a_{2,1}a_{3,4}a_{4,1} &  \\  - a_{1,2}a_{1,4}a_{2,1}a_{3,3}a_{4,1} - a_{1,2}a_{1,3}a_{2,4}a_{3,1}a_{4,1} + a_{1,3}a_{1,4}a_{2,2}a_{3,1}a_{4,1} \in I_w.
\end{split}\end{equation*}
The spread of $f$ does not contain the spread of any relation which can be obtained by merging CDG generators, so $\operatorname{spr}(f)$ is $\mathcal{R}_w$-avoiding but not $X_w$-completable.
\end{example}

\begin{theorem}\label{thm:Rwequiv}
Let $w$ be a permutation which avoids $\mathcal{P}$. Then $U$ is $\mathcal{R}_w$-avoiding if and only if it is $X_w$-completable. 
\end{theorem}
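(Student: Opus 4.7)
The forward implication is immediate: since $\mathcal{R}_w \subset I_w$, Lemma~\ref{lem:spreadprojection} shows that if $U$ is $X_w$-completable then no nonzero element of $I_w$ has spread contained in $U$, so in particular no $f \in \mathcal{R}_w$ does, i.e., $U$ is $\mathcal{R}_w$-avoiding.

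For the converse I proceed by induction on a well-founded measure such as $\ell(w) + n$; the base case $\ell(w) = 0$ is trivial since then $X_w = \mathbb{A}^{n^2}$. Fix $w$ of positive length avoiding $\mathcal{P}$ and let $U$ be $\mathcal{R}_w$-avoiding. I must exhibit $T \subset U^c$ with $|T| = \ell(w)$ and $\llbracket z^T \rrbracket \mathfrak{F}_w(z) > 0$, which by Proposition~\ref{prop:spreadmultidegree} is equivalent to showing $\pi_U(\overline{X}_w) = (\mathbb{P}^1)^U$.

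The plan is a case analysis governed by the normal form of $D(w)$ provided by Theorem~\ref{thm:characterization}: every non-dominant region is either a single row or column (Lemma~\ref{lem:rank-1-diagram}) or a high-rank region with the restrictive shape of Lemma~\ref{lem:high-rank-diagram}. In each case I identify a structural reduction $w \rightsquigarrow w'$ with $w'$ still avoiding $\mathcal{P}$ by Corollary~\ref{cor:del-preserve-p} or Proposition~\ref{prop:cotrans-preserves-P}. The candidate reductions are (i) pattern-containment deletion $\del_k(w)$ via Proposition~\ref{prop:patterncontainment} when some row or column of $D(w)$ is easily removable; (ii) deletion of a special or solid column or row via Propositions~\ref{prop:deletearbitrary}, \ref{prop:deleteoneoff}, or \ref{prop:deletesolid}; and (iii) a transition reduction via Proposition~\ref{prop:transition} at an essential box in the southeast. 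For each reduction, I define a natural image or restriction $U'$ of $U$, show $U'$ is $\mathcal{R}_{w'}$-avoiding, invoke the induction hypothesis to obtain $T' \subset (U')^c$ with $\llbracket z^{T'} \rrbracket \mathfrak{F}_{w'}(z) > 0$, and lift to $T \subset U^c$ using the corresponding geometric inequality from Section~\ref{sec:geom}. In cases (ii) and (iii) the lift forces $T$ to contain specific coordinates (the intersection with the deleted column, or the box $(r,c)$ for transition); the $\mathcal{R}_w$-avoiding hypothesis together with Lemma~\ref{lem:nosingleton} applied to the relevant CDG generators shows these coordinates indeed lie in $U^c$.

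The main obstacle is verifying in each case that $U'$ inherits the $\mathcal{R}_{w'}$-avoiding property from $U$. This amounts to showing that every $g \in \mathcal{R}_{w'}$ admits a lift to some $f \in \mathcal{R}_w$ whose spread contains the preimage of $\operatorname{spr}(g)$ under $\iota_{*,c}$ (or the analogous map), so that $\operatorname{spr}(f) \subset U$ would contradict $\mathcal{R}_w$-avoidance. Tracking the merging construction under deletion is the central technical task: Lemma~\ref{lem:cotrans-column-ok} controls how merged spreads interact with a deleted column adjacent to a non-dominant region of rank $\geq 2$, and together with Lemma~\ref{lem:nosingleton} reduces the lift to finitely many configurations permitted by Theorem~\ref{thm:characterization}. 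The most delicate subcase is the deletion of a solid or special column bordering a high-rank region, where one must reconstruct a chain of merges in $\mathcal{R}_w$ whose projection matches the single merge producing $g$; here the rectangular shape forced by Lemma~\ref{lem:high-rank-diagram} is essential, since any deviation would produce one of the forbidden patterns in $\mathcal{P}$.
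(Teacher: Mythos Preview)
What you have written is the correct high-level strategy, and it matches the paper's approach; but it is an outline, not a proof, and two concrete problems would prevent it from being completed as stated.

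First, your induction measure $\ell(w)+n$ does not work. One of the reductions that is actually needed is \emph{cotransition}: in the situation where the southeast-most essential box $(r,c)$ has rank $1$ and sits above a non-dominant region of higher rank in columns $w(1)$ and $w(1)+1$, after exhausting all ``leaf'' row/column deletions one is forced to pass to $w'=s_{w(1)}w\in L_1(w)$, which satisfies $\ell(w')=\ell(w)+1$. Your measure increases here. The paper's measure---first the bounding rectangle of $\Phi(w)$, then $|\Phi(w)\setminus\dom(w)|$---is chosen precisely so that this cotransition step decreases it (the dominant part strictly grows while $\Phi$ is unchanged). You cite Proposition~\ref{prop:cotrans-preserves-P} but do not list cotransition among your reductions (i)--(iii); it is not avoidable.

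Second, the sentence ``every $g\in\mathcal{R}_{w'}$ admits a lift to some $f\in\mathcal{R}_w$'' is where the real content lies, and it is not automatic. In several of the paper's cases the lifted CDG generator $\widetilde{h}$ has a spread that picks up a coordinate (e.g.\ $(1,w(1)+1)$) which may fail to lie in $U$; one must then merge $\widetilde{h}$ with another element of $\mathcal{R}_w$ whose existence is guaranteed only by a combinatorial claim you have not stated: that $U_{[r],[w(1),c]}$, viewed as a bipartite graph, is forced to be a \emph{path} with the columns $w(1)$ and $w(1)+1$ as its two leaves. This path structure is what supplies the extra relation to merge with. Without proving this claim and carrying out the explicit merge constructions, the $\mathcal{R}_{w'}$-avoiding step does not go through.
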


\begin{proof}[Proof of Theorem~\ref{thm:main}]
By Proposition~\ref{prop:patterncriterion}, $w$ avoids $\mathcal{P}$ if and only if $\mathfrak{S}_w(x)$ and $\mathfrak{S}_{w^{-1}}(x)$ have all coefficients equal to $0$ or $1$. Theorem~\ref{thm:Rwequiv} implies that $\mathcal{R}_w$ satisfies the criterion in Theorem~\ref{thm:irreducible}, as the relations in $\mathcal{R}_w$ are squarefree-supported. 
\end{proof}

We now explain the strategy of the proof of Theorem~\ref{thm:Rwequiv}. The results of Section~\ref{sec:geom} describe a number of ``smaller'' permutations $w' \in S_n$ that can be constructed from $w$. We will show that if $U$ is $\mathcal{R}_w$-avoiding, then some related set $U'$ is  $\mathcal{R}_{w'}$-avoiding. Corollary~\ref{cor:del-preserve-p} and Proposition~\ref{prop:cotrans-preserves-P} show that $w'$ also avoids $\mathcal{P}$. As $w'$ is ``smaller'' than $w$, we deduce by induction that $U'$ is $X_{w'}$-completable. By Lemma~\ref{lem:algebraicmatroid}, there is some $\widetilde{U}'$ containing $U'$ such that $|\widetilde{U}'| = n^2 - \ell(w')$ and $\widetilde{U}'$ is still $X_{w'}$-completable. Then the results in Section~\ref{sec:geom} construct some $\widetilde{U}$ with $|\widetilde{U}| = n^2 - \ell(w)$ that is $X_w$-completable. The set $\widetilde{U}$ will contain $U$, so $U$ is $X_w$-completable. 

We start with a simple but important lemma.
\begin{lemma}
    \label{lem:permute-row-col}
    Let $U\subset [n]^2$ and $w\in S_n$.
    Suppose $[r,r']\subset [n]$ has the property that, for all $r\le i<r'$, $w(i)<w(i+1)$. If $v\in S_n$ fixes $[n]\setminus [r,r']$,  then $U$ is $\mathcal{R}_w$-avoiding (resp., $X_w$-completable) if and only if $v\cdot U$ is $\mathcal{R}_w$-avoiding (resp., $X_w$-completable).
\end{lemma}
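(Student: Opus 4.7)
The plan is to reduce the statement to the case where $v$ is a simple transposition and then apply Proposition~\ref{prop:swap}. Since the symmetric group on $[r, r']$ is generated by $s_r, s_{r+1}, \dotsc, s_{r'-1}$, we may assume $v = s_i$ for some $r \le i < r'$. The hypothesis gives $w(i) < w(i+1)$, so Proposition~\ref{prop:swap} yields $s_i \cdot X_w = X_w$, hence $s_i \cdot \overline{X}_w = \overline{X}_w$ under the induced automorphism of $(\mathbb{P}^1)^{n^2}$ that swaps the row-$i$ and row-$(i+1)$ factors.

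For $X_w$-completability, this automorphism intertwines $\pi_U$ with $\pi_{s_i \cdot U}$ via the natural reindexing of the target. Combined with $s_i \cdot \overline{X}_w = \overline{X}_w$, this gives $\pi_U(\overline{X}_w) = (\mathbb{P}^1)^U$ if and only if $\pi_{s_i \cdot U}(\overline{X}_w) = (\mathbb{P}^1)^{s_i \cdot U}$, as needed.

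For the $\mathcal{R}_w$-avoiding claim I will argue that $\{\operatorname{spr}(f) : f \in \mathcal{R}_w\}$ is $s_i$-invariant, equivalently that $s_i \cdot \mathcal{R}_w = \mathcal{R}_w$ up to scalars. The crucial observation is that $w(i) < w(i+1)$ forces no essential box to lie in row $i$: if $(i, j) \in \mathrm{ess}(w)$ then $w(i) > j \ge w(i+1)$, a contradiction. Consequently every Fulton generator of $I_w$ is a minor of $A_{p \times q}$ for $(p, q) \in \mathrm{ess}(w)$ with $p \neq i$, so its row set $R \subset [p]$ satisfies $s_i(R) \subset [p]$ (since $[p]$ either contains both $i$ and $i+1$, or avoids both). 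Hence the action of $s_i$ on the polynomial ring sends Fulton generators to Fulton generators up to sign. Since $s_i \cdot I_w = I_w$, the set of single variables lying in $I_w$ is also $s_i$-stable, so the CDG reduction commutes with $s_i$, and $s_i \cdot \mathrm{CDG}(w) = \mathrm{CDG}(w)$ up to scalars. Finally, the merge operation is equivariant, namely $s_i \cdot \mathrm{merge}_{(\alpha,\beta)}(f_1, f_2) = \mathrm{merge}_{s_i \cdot (\alpha,\beta)}(s_i \cdot f_1, s_i \cdot f_2)$, and the row/column support conditions imposed in step (2) of the construction are preserved under $s_i$ (which fixes columns and sends $\{i,i+1\}$ to itself), so the iterative construction of $\mathcal{R}_w$ is $s_i$-equivariant.

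The main point to verify is the initial observation that $\mathrm{ess}(w)$ has no box in row $i$; everything else amounts to bookkeeping of how the action of $s_i$ on the polynomial ring interacts with the definitions of Fulton generators, CDG generators, and the merge operation, all of which are intrinsic enough to transport through $s_i$.
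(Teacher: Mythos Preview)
Your proof is correct and follows the same approach as the paper, which states only that the lemma ``follows from Proposition~\ref{prop:swap} and the construction of $\mathcal{R}_w$.'' You have simply unpacked both halves of that sentence: the reduction to $v = s_i$, the completability claim via $s_i \cdot \overline{X}_w = \overline{X}_w$, and the $s_i$-equivariance of the Fulton/CDG/merge steps that define $\mathcal{R}_w$.
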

We will often use the transposed version of this lemma, where we replace $w$ with $w^{-1}$ and $v\cdot U$ with $U\cdot v$ in the statement.

\begin{proof}
This follows from Proposition~\ref{prop:swap} and the construction of $\mathcal{R}_w$.
\end{proof}

\begin{proof}[Proof of Theorem~\ref{thm:Rwequiv}]
    It suffices to show the forward direction. We proceed by induction first on the size of the smallest rectangle containing $\Phi(w)$, then on the size of $\Phi(w)\setminus\dom(w)$. 
    Our base case is when $w$ is the identity permutation.
    Suppose $U\subset[n]^2$ is $\mathcal{R}_w$-avoiding. We may assume $U$ contains all entries outside of $\Phi(w)$, since these are not contained in the spread of any element of $\mathcal{R}_w$.

    If $D(w)_{*,[w(1),n]}=\emptyset$,
    let $w':=\del_{1}(w)$ and $U':=\iota_1^{-1}(U\setminus (U_{*,w(1)} \cup U_{1,*}))$. Since 
    $U$ is $\mathcal{R}_w$-avoiding, 
    $U'$ must be $\mathcal{R}_{w'}$-avoiding,
    and by the induction hypothesis it is
    $X_{w'}$-completable. Therefore, by Proposition~\ref{prop:patterncontainment}
    and the discussion at the beginning of
    this section,
    $U$ is $X_w$-completable.
    
    Otherwise, there exists $c>w(1)$ such that $D(w)_{*,c}\neq \emptyset $. Let $c$ be the largest such column index.  Let $(r,c)$ be the essential box with $r$ smallest.
    
    \underline{\textbf{Case I}: $(r,c)$ is not rank 1.} By assumption, $c>w(1)$. By Lemma~\ref{lem:high-rank-diagram}, we know that either $D(w)_{*,[w(1),n]}\subset \{r\}\times [w(1)+1,c]$, or that $D(w)_{*,[w(1),n]}$ lies in column $c$.  

\underline{\textbf{Case I.1}}: Suppose $D(w)_{*,[w(1),n]}\subset \{r\}\times [w(1)+1,c]$. Then $(r,c)$ is the unique essential box in column $c$, and  $w^{-1}$ is increasing in $[w(1),c]$.   
    Suppose there is $j\in [w(1), c]$ such that $|U_{*,j}|\le n-1$ and that $(i,j)\in \Phi(w)\setminus U$ for some $i$. 
    If $|U_{*,j}|<n-1$,
    we may enlarge $U$ so that it
    contains everything except $(i,j)$ in column $j$, since if $f\in \mathcal{R}_w$ satisfies $\operatorname{spr}(f)_{*,j}\neq \emptyset$, it must be the case that 
    $\operatorname{spr}(f)_{*,j}=[r]\times\{j\}$: indeed, this is
    apparent when $f$ is a CDG generator, and if $(i,j)$ is contained
    in the spreads of two CDG generators, they both must use all of rows 1 through $r$, so these generators cannot be merged. 
    We may also assume without loss of generality that $j=c$, since
    by Lemma~\ref{lem:permute-row-col} we may replace $U$ with
    $U\cdot t_{j,c}$ if $j\neq c$.

    Let $w':=\del_{*,c}(w)$, and let $U':=\iota^{-1}_{*,c}(U\setminus U_{*,c})$. Since $\mathcal{R}_{w'}\subset \mathcal{R}_w$, $U'$ is $\mathcal{R}_{w'}$-avoiding. By the induction hypothesis,
    $U'$ is $X_{w'}$-completable. 
    By Proposition~\ref{prop:deletearbitrary} or Proposition~\ref{prop:deleteoneoff} (depending on whether $i=r$), $U$ is $X_w$-completable.

    Now, suppose that for all $j\in [w(1),c]$, we have $|U_{*,j}|=n$. Then 
    let $w':=\del_1(w)$, and let 
    $U'\coloneqq \iota^{-1}_1(U\setminus (U_{*,w(1)} \cup U_{1,*})).$
    Suppose to the contrary that there exists some $f\in \mathcal{R}_{w'}$
    such that $\operatorname{spr}(f)\subset U'$. If $\operatorname{spr}(f)\cap ([n]\times [w(1),c])=\emptyset$, the construction of $w'$ implies that $f \in \mathcal{R}_w$. Otherwise, there must exist
    a unique $g\in \mathrm{CDG}(w')$ such that either $g=f$,
    or $f=\mathrm{merge}_{(r,\beta)}(g,h)$
    for some $h\in \mathcal{R}_{w'}$ and $\beta<w(1)$. 
    Let $\widetilde{g}$ be the unique generator in 
    $\mathrm{CDG}(w)$ such that, after evaluating $\widetilde{g}$ at $a_{1,w(1)}=1$,
    $a_{1,j}=0$ for all $w(1)<j\le c$, and $a_{i,w(1)}=0$
    for all $1<i \le r$, we get $g$. 
    Then set $\widetilde{f}:=\mathrm{merge}_{(r,\beta)}(\widetilde{g}, h)$ if $h$ exists, and set 
    $\widetilde{f}:=\widetilde{g}$ otherwise; we must have $\operatorname{spr}(\widetilde{f})\subset U$ and $\widetilde{f}\in \mathcal{R}_w$. This contradicts that $U$ is $\mathcal{R}_w$-avoiding. Therefore, $U'$ is
    $\mathcal{R}_{w'}$-avoiding, and by the induction hypothesis, it is $X_{w'}$-completable. By Proposition~\ref{prop:patterncontainment}, $U$
    is $X_w$-completable.
    
    \underline{\textbf{Case I.2}}: $D(w)_{*,[w(1),n]}$ lies in column $c$. Then if $(r',c)$ is the northwest corner of the region connected to $(r,c)$,  all boxes in $D(w)$ strictly northwest of $(r',c)$ are in $\dom(w)$. Notice that $w$ is increasing on $[r',r]$, rows in $[r',r]$ are special rows, and rows in $[r'-1]$ are solid rows. Since $U$ is $\mathcal{R}_w$-avoiding and the CDG generators given by the essential box $(r,c)$ cannot be merged, there must be some $i\in [r]$ such that $|U\cap \Phi(w)_{i,*}|\le |(\Phi(w)\setminus \dom(w))_{i,*}|-1$. If the inequality is strict, we may enlarge $U$ so that it achieves the equality. By the row version of \Cref{prop:deletearbitrary}, \Cref{prop:deleteoneoff}, or \Cref{prop:deletesolid}, we may apply induction on $w':= \del_{i,*}(w)$.

    \underline{\textbf{Case II:} $(r,c)$ is rank 1.}
    We reduce to the case when $(r,c)$ is the unique
    essential box in its region in $D(w)$, i.e., this region is rectangular.
    To do this, let $r'$ be the largest row index such
    that $(r', w(1)+1)$ is in $D(w)$ and in the same region
    as $(r,c)$. If this region is not rectangular,
    there is some $r<\rho \le r'$ such that
    $w(1)<w(\rho)\le c$, $(\rho-1, w(\rho))$ lies in $D(w)$,
    and $(\rho, w(\rho)-1)$ lies in $D(w)$. Let $\rho'>\rho$
    be the smallest such that $w(\rho')>w(\rho)$. (Note that $\rho'$ always exists if we consider $w$ in a large enough symmetric group. This does not change the matrix Schubert variety up to affine factors.)
    Then set $\widetilde{w}:=wt_{\rho,\rho'}$, which is a cover of $w$. By the characterization in Lemma~\ref{lem:rank-1-diagram} and Lemma~\ref{lem:high-rank-diagram} it is easily seen that $\widetilde{w}$ avoids $\mathcal{P}$. Since there are no cells
    in $D(w)$ weakly southeast of $(\rho, w(\rho))$, we have
    $\operatorname{CDG}(w)\subset \operatorname{CDG}(\widetilde{w})$.
    Then    $U\setminus \{(\rho, w(\rho))\}$ is $\mathcal{R}_{\widetilde{w}}$-avoiding.
    By Proposition~\ref{prop:transition}, 
    $U\setminus \{(\rho, w(\rho))\}$ is $X_{\widetilde{w}}$-completable \emph{if and only if} $U$ is 
    $X_{w}$-completable, so we may replace $w$ with $\widetilde{w}$. (We note that we do not yet apply the inductive hypothesis here, as this operation can increase the size of $\Phi(w) \setminus \dom(w)$.) 
    \smallskip

    Because $(r,c)$ is an essential box in a rectangular region, $w$ is increasing on $[r]$. Notice that
    $X_w\cap \mathbb{A}^{[1,r]\times [w(1),c]}$
    is the variety of rank at most $1$ matrices. By the discussion in Section~\ref{ssec:determinantal}, a subset $V\subset [1,r]\times [w(1),c]$ can be identified with a subgraph of a complete bipartite graph with vertex set $[1,r] \sqcup [w(1), c]$. If $V$ is
    $\mathcal{R}_w$-avoiding, this subgraph must be a forest. To do induction, we first try to delete a ``row leaf'' or a ``column leaf,'' i.e., a row or column which has only one entry in $U \cap \Phi(w)$.

    By Lemma~\ref{lem:rank-1-diagram} and the assumption on 
    $(r,c)$, we see that for all $1<i\le r$, all but one
    of the entries in $\Phi(w)_{i,*}$ are in $D(w)$, and for all $w(1)+2\le j \le c$, all but one of the entries $\Phi(w)_{*,j}$ are in $D(w)$.

    Suppose there is $i \le r$ such that $|U_{i,*}\cap \Phi(w)|\le 1$.
    If $U_{i,*}\cap \Phi(w)=\emptyset$, we may pick some $(i,j)\in \Phi(w)\setminus \dom(w)$
    to add to $U$; the result will still be $\mathcal{R}_w$-avoiding by the row version of Lemma~\ref{lem:nosingleton}.
    Suppose $(i,j)\in U$.
    If $i<r$, we replace $U$ with $t_{i,r} \cdot U $ using Lemma~\ref{lem:permute-row-col},
    so we may assume that $i=r$. 
    Let $w':=\del_{r,*}(w)$. Then, by the row version of Lemma~\ref{lem:containmentcol},
    under the identification of $\iota_{r,*}$, $\mathcal{R}_{w'}$
    is contained in $\mathcal{R}_w$. 
    Therefore $\iota_{r,*}^{-1}(U \setminus \{(r,j)\})$ is $\mathcal{R}_{w'}$-avoiding.
    By the induction hypothesis it is $X_{w'}$-completable, which means
    that there exists some $S'$ such that $(S')^c\supset \iota_{r,*}^{-1}(U \setminus \{(r,j)\})$, 
    and $\llbracket z^{S'} \rrbracket \mathfrak{F}_{w'}(z)>0$.
    Let $S:=\iota_{r,*}(S')\cup ((\Phi(w))_{r,*} \setminus \{(r,j)\})$.
    By Proposition~\ref{prop:deletearbitrary} or
    Proposition~\ref{prop:deleteoneoff} (depending on whether $j=w(1)$), $\llbracket z^S \rrbracket \mathfrak{F}_{w}(z)>0$. Since 
    $S^c$ contains $U$, $U$ is $X_w$-completable.

    By similar reasoning, if there is $j\in [w(1)+2,c]$ such that
    $|U_{*,j}\cap\Phi(w)|=1$, we induct using $w':=\del_{*,j}(w)$. 
    
    We may now assume that for all $i\le r$, $|U_{i,*}\cap \Phi(w)|\ge 2$,
    and for all $j\in [w(1)+2,c]$, $|U_{*,j} \cap \Phi(w)|\ge 2$. 
    \begin{claim}
    \label{claim:tree}
        Under these assumptions, 
        if $U_{[r],[w(1),c]}$ is identified
        with a subgraph of the complete bipartite graph with vertex set $[r] \sqcup [w(1),c]$, $U_{[r], [w(1), c]}$ is a path graph, 
        and 
        $|U_{[r],w(1)}|=|U_{[r],w(1)+1}|=1$.
    \end{claim}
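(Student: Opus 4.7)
The plan is to identify $G := U \cap [r] \times [w(1), c]$ with a bipartite subgraph of $K_{[r], [w(1), c]}$, and show that $\mathcal{R}_w$-avoidance forces $G$ to be a spanning path whose two degree-$1$ vertices are the columns $w(1)$ and $w(1)+1$. First I would show that $G$ is a forest. In Case II, since $(r,c)$ is a rank-$1$ essential box whose region is the rectangle $[2,r] \times [w(1)+1, c]$ and $w$ is increasing on $[r]$, the submatrix $A_{[r] \times [w(1), c]}$ has rank at most $1$ on $X_w$. Hence the $2 \times 2$ minors of this submatrix lie in $\mathrm{CDG}(w)$, and their iterated merges produce exactly the cycle relations $\eqref{eq:cycle}$ from Section~\ref{ssec:determinantal}, whose spreads are the edge-sets of cycles in $K_{[r], [w(1), c]}$. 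Because $U$ avoids all of these spreads, $G$ has no cycles.

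Next I would establish degree lower bounds on $G$. For each $i \in [r]$ and $j \in [1, w(1)-1]$, the monotonicity $w(k) \ge w(1) > j$ for all $k \in [r]$ gives $r_{i \times j}(w)=0$, so $(i,j)$ lies northwest of some rank-$0$ essential box whose CDG generator is the variable $a_{i,j} \in \mathcal{R}_w$, forcing $(i,j) \notin U$. Therefore $\deg_G(i) = |U_{i,*} \cap \Phi(w)| \ge 2$. For each column $j \in [w(1)+2, c]$, I would apply Lemma~\ref{lem:rank-1-diagram} (or the analogous structural reasoning in the boundary case $w(1)=1$) at the outer corner $(1, w(1))$ of $\dom(w)$, together with the rectangular-region hypothesis, to conclude that every $(k, \ell) \in D(w)$ with $k > r$ satisfies $\ell \le w(1)+1$. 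Consequently $\Phi(w)_{*,j} \subset [r]$, which yields $\deg_G(j) = |U_{*,j} \cap \Phi(w)| \ge 2$.

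With the degree bounds in hand, I would finish by counting. Let $e = |E(G)|$ and $d_i = |U_{[r], w(1)+i-1}|$ for $i=1,2$. The forest bound gives $e \le r + c - w(1)$, while summing row and column degrees yields $e \ge 2r$ and $e \ge 2(c - w(1) - 1) + d_1 + d_2$ respectively. These force $r \le c - w(1)$ and $d_1 + d_2 \le r - c + w(1) + 2$. A short case analysis eliminating $d_1 + d_2 \in \{0,1\}$ (in each case an isolated column vertex shrinks the effective vertex set and collides with the forest bound) yields $d_1 + d_2 = 2$ and $r = c - w(1)$. Equality throughout then gives $e = 2r$, so $G$ is a spanning tree in which every row vertex and every column vertex in $[w(1)+2, c]$ has degree $2$, and $d_1 = d_2 = 1$. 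A spanning tree with all degrees at most $2$ is a path. I expect the hardest step to be the column-degree bound in the second paragraph, since it is where both the $\mathcal{P}$-avoidance of $w$ (through the structural lemmas) and the rectangular-region hypothesis must be used in a coordinated way.
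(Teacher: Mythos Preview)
Your proposal is correct and follows essentially the same counting argument as the paper: forest bound on $|E(G)|$, row and column degree lower bounds, and a short case analysis forcing $G$ to be a spanning tree with all degrees at most $2$. The only differences are organizational. You spend a paragraph justifying why the stated hypotheses $|U_{i,*}\cap\Phi(w)|\ge 2$ and $|U_{*,j}\cap\Phi(w)|\ge 2$ translate into the graph-degree bounds $\deg_G(i)\ge 2$ and $\deg_G(j)\ge 2$; the paper takes this for granted, having already recorded (just before the claim, via Lemma~\ref{lem:rank-1-diagram}) that $\Phi(w)_{i,*}=\{i\}\times[1,c]$ for $i\le r$ and $\Phi(w)_{*,j}=[r]\times\{j\}$ for $j\in[w(1)+2,c]$. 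In the final step you case on $d_1+d_2$ while the paper cases on $m=|E(G)|\in\{r+k-2,r+k-1\}$, but these are two orderings of the same elimination. Your caveat about the boundary case $w(1)=1$ is unnecessary: the outer-corner condition for $(1,w(1))$ is vacuous there, so Lemma~\ref{lem:rank-1-diagram} applies uniformly.
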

    \begin{proof}
    Let $m:=|U_{[r],[w(1),c]}|$ and $k=c-w(1)+1$.
        Using the graph-theoretic interpretation, since
        $U$ is $\mathcal{R}_w$-avoiding, $U_{[r],[w(1),c]}$
        must be identified with a forest.
        Therefore $m\le r+k-1$.
        By assumption, $m\ge 2r$ and $m \ge 2(k-2)$,
        so $m\ge r+k-2$. Therefore $m=r+k-1$ or 
        $m=r+k-2$. If $m=r+k-1$, the only possibility
        is that $U_{[r],[w(1),c]}$ is a path graph,
        and $|U_{[r],w(1)}|=|U_{[r],w(1)+1}|=1$.
        Otherwise, $U_{[r],[w(1),c]}$ is a forest, so it
        either has an isolated vertex (which must be
        one of the columns $w(1)$ and $w(1)+1$), or at least
        four leaves. In either case, at least one row in $[r]$ or
        one column in $[w(1)+2,c]$ must be a leaf,
        contradicting the assumption.
    \end{proof}

    Since we may permute rows in $[r]$ using Lemma~\ref{lem:permute-row-col}, we may assume
    that $(1,w(1))\in U$ and that
    $(i,w(1)+1)\in U$ for some $i\le r$.

    Let $(r_2,c_2)\in \mathrm{ess}(w)\setminus \dom(w)$
    such that $c_2<c$ is maximal and $r_2$ is as small as possible among such boxes. Namely, 
    $(r_2,c_2)$ is the second essential box
    in $\Phi(w)\setminus\dom(w)$, scanning northeast to southwest. (If $(r_2,c_2)$ does not exist, we are done by the reasoning above.)
    We have $r_2>r$. If $c_2<w(1)$,
    then $X_w$ is isomorphic to the product of $X_w\cap \mathbb{A}^{[n]\times [w(1)-1]}$ and
    $X_w\cap \mathbb{A}^{[n]\times [w(1),n]}$, which
    are both smaller matrix Schubert varieties,
    so the result follows by induction. 
    Thus by Lemma~\ref{lem:rank-1-diagram} we assume $c_2=w(1)+1$.
    \medskip
    
    \underline{\textbf{Case II.1}}. There exists some
    $c'<c_2$ such that $(r_2,c')\in D(w)$
    and $(r_2,c')$ is not rank 0. 
    Then we have
    $D(w)_{[r+1,r_2-1],[w(1),w(1)+1]}=\emptyset$. See Figure~\ref{fig:caseII1}.
    \begin{figure}[h]
        \centering
        \includegraphics[width=0.3\linewidth]{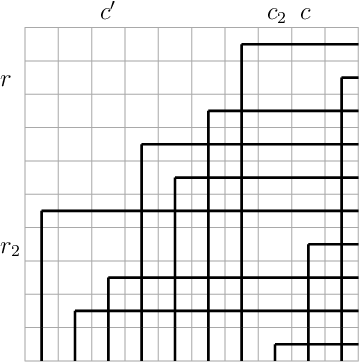}
        \caption{Case II.1}
        \label{fig:caseII1}
    \end{figure}

    \begin{claim} We may assume $U$ contains
    $[r+1,r_2]\times \{w(1)+1\}$.
    \end{claim}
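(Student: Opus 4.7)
The plan is to show that any element in $[r+1, r_2] \times \{w(1)+1\}$ that is missing from $U$ can be added to $U$ without violating $\mathcal{R}_w$-avoidance; iterating this gives the claim. First, I would verify that $[r+1, r_2] \times \{w(1)+1\} \subset \Phi(w)$, which follows because $(r_2, w(1)+1) \in D(w)$. I would also note that by the structure of Case II.1 together with Lemma~\ref{lem:rank-1-diagram} and Lemma~\ref{lem:high-rank-diagram}, for every $i \in [r+1, r_2-1]$ one has $w(i) < w(1)$, and $(r_2, w(1)+1)$ lies in an essential box of rank $m \ge 2$ (where the lower bound on the rank comes from the existence of a nonzero-rank $(r_2, c') \in D(w)$ with $c' < w(1)+1$).

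Next, I would classify the possible $f \in \mathcal{R}_w$ whose spread can contain an entry $(i, w(1)+1)$ with $i \in [r+1, r_2]$. Any CDG generator $g$ with such an entry in its spread must come from a minor of size at least $m+1 \ge 3$ that uses column $w(1)+1$, and therefore necessarily involves at least two rows from $[r]$ as well. This forces $\operatorname{spr}(g)$ to contain at least two entries inside the rank 1 rectangle $[r] \times \{w(1), w(1)+1\}$. A key step is then to verify, using Lemma~\ref{lem:cotrans-column-ok} and the fact that merges cannot occur at positions in $[r] \times \{w(1), w(1)+1\}$ (because any merge at such a coordinate would require two CDG generators meeting only there, which is impossible given the shape of the rank 1 rectangle's generators), that after all merges the spread of $f$ still contains at least two entries in $[r] \times \{w(1), w(1)+1\}$.

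Combining this with Claim~\ref{claim:tree}, which says $|U_{[r], w(1)}| = |U_{[r], w(1)+1}| = 1$, I conclude that no $f \in \mathcal{R}_w$ with spread intersecting $[r+1, r_2] \times \{w(1)+1\}$ can have its spread contained in $U \cup [r+1, r_2] \times \{w(1)+1\}$. Therefore we may freely add each missing entry $(i, w(1)+1) \in [r+1, r_2] \times \{w(1)+1\}$ to $U$ while preserving $\mathcal{R}_w$-avoidance, and the claim follows.

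The main obstacle I anticipate is controlling the merges carefully in the second step: the $\operatorname{merge}$ operation can cancel spread entries at the merging coordinate, so one has to rule out that a chain of merges could reduce the occupancy of $f$ inside $[r] \times \{w(1), w(1)+1\}$ to at most one entry. The structural constraint that merging requires exactly one shared coordinate and that the rank 1 rectangle's CDG generators are $2 \times 2$ minors whose spread is a $4$-cycle on $[r] \sqcup [w(1), c]$ (as in Section~\ref{ssec:determinantal}) should be exactly what is needed to make this rigorous.
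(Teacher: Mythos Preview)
Your approach has a genuine gap. The central claim that any CDG generator whose spread touches $[r+1,r_2]\times\{w(1)+1\}$ ``necessarily involves at least two rows from $[r]$'' is false: such a generator comes from the essential box $(r_2,c_2)$ of rank $m=r_2-r$, and by the reasoning in the proof of Lemma~\ref{lem:cotrans-column-ok} it uses exactly \emph{one} row from $[r]$ together with all of $[r+1,r_2]$. Your next claim, that merges cannot occur at positions in $[r]\times\{w(1),w(1)+1\}$, is also false: one can merge such a generator $g$ (using row $i_0\in[r]$) with a $2\times 2$ minor $h$ from the rank-$1$ box on rows $\{i_0,i_1\}\subset[r]$ and columns $\{w(1),j\}$ with $j>w(1)+1$; they meet exactly at $(i_0,w(1))$ and share only that row and column. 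Finally, even if you could show that $\operatorname{spr}(f)$ always retains at least two entries in $[r]\times\{w(1),w(1)+1\}$, this would not finish the argument: Claim~\ref{claim:tree} gives $|U\cap([r]\times\{w(1),w(1)+1\})|=2$, so nothing prevents those two entries of $\operatorname{spr}(f)$ from coinciding with the two entries of $U$.

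The paper's proof proceeds differently. Lemma~\ref{lem:cotrans-column-ok} gives the stronger conclusion that any $f\in\mathcal{R}_w$ whose spread meets $[r+1,r_2]\times\{w(1)+1\}$ must contain all of $[r+1,r_2]\times\{w(1),w(1)+1\}$; the relevant obstruction therefore lives in rows $[r+1,r_2]$, not in $[r]$. The argument then splits: if $U\supset[r+1,r_2]\times\{w(1)\}$, replace $U$ by $U\cdot s_{w(1)}$ via Lemma~\ref{lem:permute-row-col}; if not, adding $[r+1,r_2]\times\{w(1)+1\}$ to $U$ is safe because any such $f$ already has a spread entry in $[r+1,r_2]\times\{w(1)\}$ outside $U$. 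Your attempt omits this column-swap case entirely and looks at the wrong block of rows for the obstruction.
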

    \begin{proof}
        If $U\supset [r+1, r_2]\times \{w(1)\}$, we may
replace $U$ with $U\cdot s_{w(1)}$ by Lemma~\ref{lem:permute-row-col}. If $U$ contains neither $[r+1, r_2]\times \{w(1)\}$
nor $[r+1, r_2]\times \{w(1)+1\}$, by Lemma~\ref{lem:cotrans-column-ok}, we may replace $U$ with $U\cup ([r+1,r_2]\times \{w(1)+1\})$, since if $f\in\mathcal{R}_w$ has
$\operatorname{spr}(f)_{[r+1,r_2], \{w(1)+1\}}\neq\emptyset$,
there is $h\in \operatorname{cdg}(f)$ such that $h$ is obtained from a Fulton generator of size $r_2-r+1$ whose spread lies in  $[r_2]\times [c_2]$ and must contain $[r+1,r_2]\times \{w(1)\}$.
    \end{proof}

    We can cotransition at $1$, and $w' := s_{w(1)}w$ is in $L_1(w)$, i.e., it appears
    on the right-hand side of the cotransition
    formula. We show that 
    $U\setminus \{(1,w(1))\}$ is $\mathcal{R}_{w'}$-avoiding. 

    Suppose to the contrary that there is $f\in\mathcal{R}_{w'}$ which has
    $\operatorname{spr}(f)\subset U\setminus\{(1,w(1))\}$.
    We will construct a relation in $\mathcal{R}_w$ whose
    spread is contained in $U$.
    If $f$ satisfies
    $\operatorname{spr}(f)\cap [r+1,r_2]\times \{w(1)\}=\emptyset$, then $f\in \mathcal{R}_w$. Since $(r_2,w(1))\in \operatorname{ess}(w')$
    has  rank $r_2-r-1$,
    $\operatorname{spr}(f)\supset [r+1,r_2]\times \{w(1)\}$. Either $f\in\mathrm{CDG}(w')$, in which case we set
    $h:=f$,
    or we have $f=\mathrm{merge}_{(\alpha,\beta)}(f_1,h)$,
    where $f_1\in \mathcal{R}_{w'}$, $h\in \mathrm{CDG}(w')$ and 
    $\operatorname{spr}(h)\supset [r+1,r_2]\times \{w(1)\}$. Note that
    since $\operatorname{spr}(f_1)\cap [r+1,r_2]\times \{w(1)\}=\emptyset$, we have
    $f_1\in\mathcal{R}_w$.
    Let $\widetilde{h}\in\mathrm{CDG}(w)$ 
    be the (unique) generator such that
    $\operatorname{spr}(\widetilde{h})\supset \operatorname{spr}(h) \cup ([r+1,r_2]\times \{w(1)+1\}) \cup \{(1,w(1))\}$. 
    If $f=h$, let $\widetilde{f}:=\widetilde{h}$; otherwise
    let $\widetilde{f}:=\mathrm{merge}_{(\alpha,\beta)}(f_1,\widetilde{h})$.
    Then if $(1,w(1)+1)\in U$, we have $\operatorname{spr}(\widetilde{f})\subset U$, a contradiction. Otherwise, $\operatorname{spr}(\widetilde{f})\setminus \{(1,w(1)+1)\}\subset U$.
    By Claim~\ref{claim:tree}, there exists $f_2\in \mathcal{R}_w$ with $\operatorname{spr}(f_2)\subset U_{[r],[w(1)+1,c]} \cup \{(1,w(1)+1)\}$. Then $\mathrm{merge}_{(1,w(1)+1)}(\widetilde{f}, f_2)$ lies in $\mathcal{R}_w$ and its spread
    is contained in $U$.

    By induction, using Proposition~\ref{prop:cotransition},
    we conclude that $U$ is $X_w$-completable.
\medskip

    \underline{\textbf{Case II.2}}. 
    For all $c'<c_2$, if $(r_2,c')\in D(w)$,
    then $(r_2,c')\in\dom(w)$.
    Let $r'>r$ denote the smallest row
    index such that $(r',c_2)\in D(w)$. See
    Figure~\ref{fig:caseII2}.
\begin{figure}[h]
    \centering
    \includegraphics[width=0.3\linewidth]{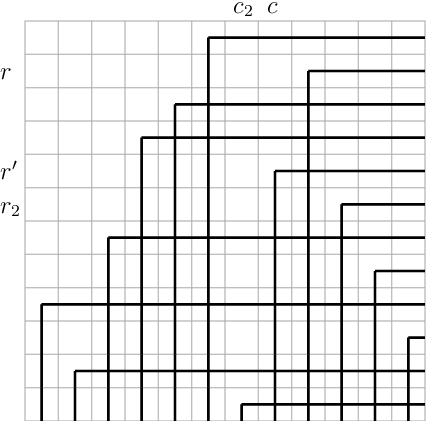}
    \caption{Case II.2}
    \label{fig:caseII2}
\end{figure}
    
    Let $m:=r'-r-1$. By Lemma~\ref{lem:high-rank-diagram},
    we have $w([r+1,r'-1])=[w(1)-m, w(1)-1]$.
    Let $V:=([r+1, r']\times [w(1)-m, w(1)+1])\setminus\dom(w)$.

    We argue that we cannot have $V \subset U$. 
    Suppose otherwise.
    Let $h\in \mathrm{CDG}(w)$ such that
    $\operatorname{spr}(h)= V\cup \{(1,w(1)),(1,w(1)+1)\}$. Since $U$
    is $\mathcal{R}_w$-avoiding and $(1,w(1))\in U$,
    it must be the case that $(1,w(1)+1)\not\in U$.
    However, by Claim~\ref{claim:tree}, there is $f\in \mathcal{R}_w$ with $\operatorname{spr}(f)\subset U_{[r],[w(1)+1,c]} \cup \{(1,w(1)+1)\}$. Then $\mathrm{merge}_{(1,w(1)+1)}(h, f)$ lies in $\mathcal{R}_w$ because it can be obtained by repeatedly merging CDG generators, and its spread
    is contained in $U$.

    Therefore, there exists $(i,j)\in V$ such that $(i,j)\not\in U$. 
    We may also assume that 
    $|(U\cap\Phi(w))_{i,*}|=|V_{i,*}|-1$,
    since if the spread of some $f\in\mathcal{R}_w$ intersects
    $V_{i,*}$ it must contain $V_{i,*}$.
    Let $w':=\del_{i,*}(w)$. If $i=r'$
    and $j=w(1)+1$, we apply Proposition~\ref{prop:deletearbitrary};
    if $i=r'$ and $j<w(1)+1$, we apply
    Proposition~\ref{prop:deleteoneoff};
    if $i<r'$, we apply Proposition~\ref{prop:deletesolid}.
    The claim then follows by induction.
\end{proof}

\bibliography{rank.bib}
\bibliographystyle{amsalpha}

\end{document}